\theoremstyle{plain}
\newtheorem{theorem}{Theorem}
\newtheorem{lemma}[theorem]{Lemma}
\theoremstyle{definition}
\newtheorem{definition}[theorem]{Definition}
\newtheorem{remark}[theorem]{Remark}
\def\vtx{v}
\def\teegy{t_1}
\def\teketto{t_2}
\def\teegyvesszo{\theta_1}
\def\tekettovesszo{\theta_2}
\def\teivesszo{\theta_i}
\DeclareMathOperator{\curv}{\text{curv}}
\title{A lower bound on the number of colours needed to nicely colour a sphere}
\author{Péter Ágoston\thanks{MTA-ELTE Lendület Combinatorial Geometry (CoGe) Research Group, Eötvös Loránd University, Budapest, Hungary and Alfréd Rényi Institute of Mathematics, supported by the European Union, co-financed by the European Social Fund (EFOP-3.6.3-VEKOP-16-2017- 00002) and by the Ministry of Innovation and Technology NRDI Office within the framework of the Artificial Intelligence National Laboratory.  (RRF-2.3.1-21-2022-00004), e-mail address: agostonp@renyi.hu.}}
\begin{document}
\thispagestyle{empty}

\maketitle

\begin{abstract}
The Hadwiger--Nelson problem is about determining the chromatic number of the plane (CNP), defined as the minimum number of colours needed to colour the plane so that no two points of distance $1$ have the same colour. In this paper we investigate a related problem for spheres and we use a few natural restrictions on the colouring. Thomassen showed that with these restrictions, the chromatic number of all manifolds satisfying certain properties (including the plane and all spheres with a large enough radius) is at least $7$. We prove that with these restrictions, the chromatic number of any sphere with a large enough radius is at least $8$. This also gives a new lower bound for the minimum colours needed for colouring the $3$-dimensional space with the same restrictions.
\end{abstract}

\section{Introduction}

\subsection{Colourings of the plane}

\begin{center}
\begin{minipage}{.24\textwidth}
	\centering
	\usetikzlibrary{arrows}
\begin{tikzpicture}[line cap=round,line join=round]
\clip(-1,-0.2) rectangle (1,1.9);
\draw (0,1.66)-- (-0.73,0.97);
\draw (-0.73,0.97)-- (-0.5,0);
\draw (-0.5,0)-- (0.23,0.68);
\draw (0.23,0.68)-- (0,1.66);
\draw (-0.73,0.97)-- (0.23,0.68);
\draw (0,1.66)-- (-0.23,0.68);
\draw (-0.23,0.68)-- (0.73,0.97);
\draw (0.73,0.97)-- (0,1.66);
\draw (-0.23,0.68)-- (0.5,0);
\draw (0.73,0.97)-- (0.5,0);
\draw (-0.5,0)-- (0.5,0);
\begin{scriptsize}
\fill [color=black] (0,1.66) circle (1.5pt);
\fill [color=black] (-0.5,0) circle (1.5pt);
\fill [color=black] (0.5,0) circle (1.5pt);
\fill [color=black] (-0.73,0.97) circle (1.5pt);
\fill [color=black] (0.23,0.68) circle (1.5pt);
\fill [color=black] (-0.23,0.68) circle (1.5pt);
\fill [color=black] (0.73,0.97) circle (1.5pt);
\end{scriptsize}
\end{tikzpicture}
	\captionof{figure}{}
	\label{fig:Moserspindle}
\end{minipage}
\begin{minipage}{.68\textwidth}
	\centering
	\input{Figures/Isbell}
	\captionof{figure}{}
	\label{fig:Isbell}
\end{minipage}
\end{center}

The Hadwiger--Nelson problem is a well-known problem in combinatorial geometry. It asks to determine the chromatic number of the plane (CNP), i.e., the minimum number of colours needed to colour the plane so that no two points of distance $1$ have the same colour. Alternatively, it is the chromatic number of the graph of unit distances on the plane. Since 1950 it has been known that $4\le CNP\le7$. The lower bound was obtained by Nelson (1950), but it can be most easily proven by using a graph called the Moser spindle (Figure \ref{fig:Moserspindle}) (Moser, Moser (1961)~\cite{mm}), while the upper bound was given by Isbell (1950), using the colouring in Figure \ref{fig:Isbell}. Since 2018, it is also known that $CNP\ge5$ (de Grey \cite{dg}, Exoo, Ismailescu \cite{ei}).

\subsection{Tilings}

The problem has some variations, in which we require some additional conditions from the colour classes:

A natural restriction is if we require from the colour classes to be measurable, the best known lower bound for the number of colours needed is also $5$ (Falconer (1981) \cite{f}) and the best known upper bound is also $7$ (also from Figure \ref{fig:Isbell}).

A stronger condition is if we require the colouring to be generated in the following way:

Divide the plane into regions using Jordan curves and let the colour classes be the unions of such regions. We call the regions in such a partition of the plane {\bf tiles} and call such a colouring a {\bf tiling}. This definition itself has many variants and ambiguities:

1) Can we use infinitely many Jordan curves inside a bounded region?

2) Can we use infinitely many regions inside a bounded region?

3) Can regions be not simply connected? If so, do we allow the complement of a region to have infinitely many connected components?

4) What about the borders? It is natural to require all of their points to have the same colour as one of the bordering tiles, but it still can be a question whether there should be at least some regularity of their colourings or not.

The best known lower bound for the number of colours needed in a tiling of the plane is $6$ (Townsend (2005) \cite{t}) and the best known upper bound is also $7$. Note that the definition of tiling used in Townsend's proof uses the most generous answer for all of the above questions.

We now define the notion we will mean by tiling.

\begin{definition}
Take a family $\mathcal{T}$ of sets $T_i$ ($i\in I$ for some index set $I$), with the following properties:

1) Every $T_i$ has the union of finitely many disjoint simple closed Jordan curves as its boundary.

2) $\dot{\bigcup\limits_{i\in I}}{T_i}=\mathbb{R}^2$.

Also take a function $f: I\rightarrow\left\lbrace 1,2,...,n\right\rbrace$ for some $n\in\mathbb{N}$ satisfying the property that for any pair $\left(T_i,T_j\right)$ of tiles with points $p\in T_i$, $p'\in T_j$, $\left\lvert pp'\right\rvert=1$, $f(i)\neq f(j)$.

Then the pair $\left(\mathcal{T},f\right)$ is called an $n$-tiling, the $T_i$ are called the tiles in this tiling and $f$ is called the colouring.
\end{definition}

Thomassen also defined a type of tiling:

A colouring of a surface with a metric is \emph{nice}, if it is a tiling, all tiles have diameter less than $1$, all pairs of tiles with the same colour have distance more than $1$ and all tiles are simply connected.

He also proved the following theorem:

\begin{theorem}[Thomassen \cite{t}]
\label{thm:thomassen}
Suppose a surface $S$ satisfies the following three conditions for some natural number $k$:

1. Every noncontractible simple closed curve has diameter at least $2$.

2. If $C$ is a simple closed curve of diameter less than $2$, then the area of $int(C)$ is at most $k$.

3. The diameter of $S$ is at least $12k+30$.

Then every nice tiling contains at least $7$ colours.
\end{theorem}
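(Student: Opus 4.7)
My approach would be proof by contradiction: assume that $S$ admits a nice tiling $(\mathcal{T},f)$ using only $6$ colours, and produce a $7$-element clique in the \emph{unit-distance tile graph}, the graph on tiles in which $T\sim T'$ iff $d(T,T')\le 1$ (such a clique necessarily needs $7$ distinct colours).

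The starting observation is a closed-star clique: if a tile $T$ shares a boundary arc with each of $T_{1},\dots,T_{d}$, then $\{T,T_{1},\dots,T_{d}\}$ is pairwise at distance $<1$. Indeed $d(T,T_{i})=0$, and for $i\ne j$ one can pick $x_{i}\in\overline{T}\cap\overline{T_{i}}$ and $x_{j}\in\overline{T}\cap\overline{T_{j}}$, whence $d(T_{i},T_{j})\le d(x_{i},x_{j})\le\operatorname{diam}(T)<1$. Analogously, if $m$ tiles meet at a single vertex of $\mathcal{T}$ they are pairwise at distance $0$. It thus suffices to locate either (a) a tile of boundary-degree $\ge 6$, or (b) a vertex of the tiling of multiplicity $\ge 7$; either produces the desired $7$-clique.

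The hard case is when (a) and (b) both fail, i.e.\ every tile has at most $5$ boundary-adjacent neighbours and every vertex of $\mathcal{T}$ has at most $6$ tiles meeting there. A purely global Euler-characteristic count does not rule this out: spheres admit snub-dodecahedron-type tilings in which no tile has boundary-degree $\ge 6$, and the torus admits the hexagonal tiling, so the argument has to be semi-local. Using condition~3, pick two points $p,q\in S$ with $d(p,q)\ge 12k+30$ together with a shortest geodesic $\gamma$ between them; since every tile has diameter $<1$, $\gamma$ meets at least $12k+30$ distinct tiles, giving a long combinatorial strip. Within a unit tubular neighbourhood of $\gamma$, condition~2 (area under a diameter-$<2$ curve is at most $k$) bounds the number of tiles that can meet any unit segment of $\gamma$, so the strip is combinatorially well-behaved. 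A discharging argument on the dual graph restricted to this strip --- in which the bounds ``degree $\le 5$'' and ``multiplicity $\le 6$'' each contribute a fixed combinatorial curvature deficit per interior dual vertex or face --- yields a total deficit growing linearly in the length of $\gamma$, which eventually exceeds the boundary contribution and contradicts Euler's formula on the strip.

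The main obstacle is to make the discharging bookkeeping tight. One has to identify the correct ``curvature'' contribution of each admissible local configuration under the degree/multiplicity cap and verify that, integrated over the strip, it strictly beats the boundary error, with the latter controlled by condition~2. Conditions~1 and 2 guarantee the topological and metric regularity of small discs needed to apply Euler locally (condition~1 makes tile boundaries contractible, condition~2 makes the strip locally finite), while the explicit constant $12k+30$ in condition~3 is calibrated precisely so that the interior deficit dominates this boundary error. Once the discharging goes through, either (a) or (b) must occur, yielding a $7$-clique in the unit-distance tile graph and contradicting the existence of the assumed nice $6$-colouring.
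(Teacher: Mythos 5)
First, a point of comparison: the paper does not prove Theorem \ref{thm:thomassen} at all --- it is quoted from Thomassen's article, and the only in-paper material related to it is the remark following Lemma \ref{lem:fullytriangulated}, which sketches the planar special case (the fully triangulated adjacency graph has average degree tending to $6$ over large regions, while a nice $6$-colouring forces every degree to be at most $5$). So your proposal has to stand on its own, and it does not.

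Your opening reduction is correct and standard: the closed star of a tile with at least $6$ boundary-adjacent neighbours consists of $7$ tiles that pairwise have distance less than $1$, hence need $7$ distinct colours. The genuine gap is the step that is supposed to produce such a configuration, namely the ``discharging on a strip.'' For a triangulated disc the combinatorial Gauss--Bonnet identity reads $\sum_{v\,\mathrm{int}}(6-\deg v)+\sum_{v\in\partial}(4-\deg v)=6$, and a tubular neighbourhood of a geodesic of length $L$ has $\Theta(L)$ boundary vertices, so the boundary term grows linearly in $L$ exactly as fast as the interior deficit you want to accumulate; arbitrarily long triangulated strips with every interior vertex of degree at most $5$ exist (e.g.\ strips with no interior vertices at all). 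Hence the claimed contradiction with Euler's formula never materialises, and no calibration of the constant $12k+30$ repairs this --- the region on which one counts cannot be a long thin strip. A secondary problem: condition 2 is an \emph{area} bound and tiles have no lower bound on their size, so it does not ``bound the number of tiles that can meet any unit segment of $\gamma$''; the local finiteness you invoke must instead come from the colouring itself (an open disc of diameter $1$ meets at most one tile of each colour). The hypotheses have to be used the way Thomassen uses them: grow the combinatorial balls around a single tile, prove their boundaries are single cycles (this is where condition 1 and the simple connectivity of tiles do real work, rather than being mentioned in passing), and play the resulting degree counts against the diameter and area bounds of conditions 2 and 3. As written, your argument establishes only the easy reduction, not the theorem.
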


Since the plane satisfies the conditions, the theorem proves that every nice tiling of the plane contains at least $7$ colours.

In this paper, we will use a weaker condition for a \emph{nice colouring} (or by an alternative name, a \emph{nice tiling}):

A colouring of a surface with a metric is \emph{nice}, if it is a tiling, all tiles have diameter less than $1$ and all pairs of tiles with the same colour have distance more than $1$.

Note that this definition contains two restrictions compared to a general tiling:

\begin{center}
\begin{minipage}{.48\textwidth}
	\centering
	\usetikzlibrary{arrows}
\definecolor{ffqqqq}{rgb}{1,0,0}
\definecolor{wwffff}{rgb}{0.4,1,1}
\begin{tikzpicture}[line cap=round,line join=round]
\clip(6.65,-1.57) rectangle (9.09,0.03);
\fill[line width=0pt,color=wwffff,fill=wwffff,fill opacity=1.0] (7.58,-0.82) -- (7.93,-1.09) -- (8.49,-1.24) -- cycle;
\fill[line width=0pt,color=wwffff,fill=wwffff,fill opacity=1.0] (7.58,-0.82) -- (8.5,-0.44) -- (8.02,-0.45) -- cycle;
\fill[line width=0pt,color=ffqqqq,fill=ffqqqq,fill opacity=1.0] (8.49,-1.24) -- (8.82,-1.34) -- (8.95,-0.74) -- (8.77,-0.2) -- (8.5,-0.44) -- cycle;
\fill[line width=0pt,color=ffqqqq,fill=ffqqqq,fill opacity=1.0] (7.25,-1.38) -- (7.58,-0.82) -- (7.19,-0.33) -- (6.8,-0.95) -- cycle;
\draw [shift={(7.58,-0.82)},line width=0.4pt,color=wwffff,fill=wwffff,fill opacity=1.0]  (0,0) --  plot[domain=-0.44:0.4,variable=\t]({1*1*cos(\t r)+0*1*sin(\t r)},{0*1*cos(\t r)+1*1*sin(\t r)}) -- cycle ;
\draw (7.58,-0.82)-- (7.19,-0.33);
\draw (7.19,-0.33)-- (6.8,-0.95);
\draw (6.8,-0.95)-- (7.25,-1.38);
\draw (7.25,-1.38)-- (7.58,-0.82);
\draw (7.58,-0.82)-- (7.93,-1.09);
\draw (7.93,-1.09)-- (8.49,-1.24);
\draw (8.49,-1.24)-- (8.82,-1.34);
\draw (8.82,-1.34)-- (8.95,-0.74);
\draw (8.95,-0.74)-- (8.77,-0.2);
\draw (8.77,-0.2)-- (8.5,-0.44);
\draw (8.5,-0.44)-- (8.02,-0.45);
\draw (8.02,-0.45)-- (7.58,-0.82);
\draw [shift={(7.58,-0.82)}] plot[domain=-0.44:0.4,variable=\t]({1*1*cos(\t r)+0*1*sin(\t r)},{0*1*cos(\t r)+1*1*sin(\t r)});
\draw [->] (7.58,-0.82) -- (8.58,-0.76);
\draw [->] (8.58,-0.76) -- (7.58,-0.82);
\begin{scriptsize}
\draw[color=black] (8.17,-0.69) node {$1$};
\end{scriptsize}
\end{tikzpicture}
	
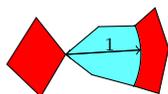
\captionof{figure}{Tiles with distance $1$}
	\label{1distancetiles}
\end{minipage}
\begin{minipage}{.48\textwidth}
	\centering
	\usetikzlibrary{arrows}
\definecolor{ffffff}{rgb}{1,1,1}
\definecolor{ffqqqq}{rgb}{1,0,0}
\begin{tikzpicture}[line cap=round,line join=round,scale=0.4]
\clip(3.02,-4.44) rectangle (13.16,0.8);
\fill[line width=0pt,color=ffqqqq,fill=ffqqqq,fill opacity=1.0] (3.74,-2.4) -- (5.6,-3.76) -- (7.16,-2.34) -- (6.46,0.08) -- (4.18,-0.2) -- cycle;
\fill[line width=0pt,color=ffqqqq,fill=ffqqqq,fill opacity=1.0] (9.82,-2.2) -- (11.82,-3.34) -- (12.46,-1.64) -- (10.38,-0.28) -- cycle;
\draw [shift={(5.64,-2.66)},line width=0.4pt,color=ffqqqq,fill=ffqqqq,fill opacity=1.0]  plot[domain=1.28:2.11,variable=\t]({1*2.86*cos(\t r)+0*2.86*sin(\t r)},{0*2.86*cos(\t r)+1*2.86*sin(\t r)});
\draw [shift={(5.64,-2.66)},line width=0.4pt,color=ffqqqq,fill=ffqqqq,fill opacity=1.0]  (0,0) --  plot[domain=1.28:2.11,variable=\t]({1*2.86*cos(\t r)+0*2.86*sin(\t r)},{0*2.86*cos(\t r)+1*2.86*sin(\t r)}) -- cycle ;
\draw [shift={(6.42,-0.68)},color=ffqqqq,fill=ffqqqq,fill opacity=1.0]  (0,0) --  plot[domain=3.71:4.45,variable=\t]({1*3.19*cos(\t r)+0*3.19*sin(\t r)},{0*3.19*cos(\t r)+1*3.19*sin(\t r)}) -- cycle ;
\draw [shift={(4.48,-0.96)},line width=0.4pt,color=ffqqqq,fill=ffqqqq,fill opacity=1.0]  (0,0) --  plot[domain=5.09:5.81,variable=\t]({1*3.02*cos(\t r)+0*3.02*sin(\t r)},{0*3.02*cos(\t r)+1*3.02*sin(\t r)}) -- cycle ;
\draw [shift={(9.38,-4.08)},line width=0.4pt,color=ffqqqq,fill=ffqqqq,fill opacity=1.0]  (0,0) --  plot[domain=0.67:1.31,variable=\t]({1*3.93*cos(\t r)+0*3.93*sin(\t r)},{0*3.93*cos(\t r)+1*3.93*sin(\t r)}) -- cycle ;
\draw [line width=0.4pt,color=ffffff,fill=ffffff,fill opacity=1.0] (9.47,-5.14) circle (2.96cm);
\draw (4.18,-0.2)-- (3.74,-2.4);
\draw (6.46,0.08)-- (7.16,-2.34);
\draw [shift={(6.42,-0.68)}] plot[domain=3.71:4.45,variable=\t]({1*3.19*cos(\t r)+0*3.19*sin(\t r)},{0*3.19*cos(\t r)+1*3.19*sin(\t r)});
\draw [shift={(4.48,-0.96)}] plot[domain=5.09:5.81,variable=\t]({1*3.02*cos(\t r)+0*3.02*sin(\t r)},{0*3.02*cos(\t r)+1*3.02*sin(\t r)});
\draw [shift={(5.64,-2.66)}] plot[domain=1.28:2.11,variable=\t]({1*2.86*cos(\t r)+0*2.86*sin(\t r)},{0*2.86*cos(\t r)+1*2.86*sin(\t r)});
\draw [shift={(9.47,-5.14)}] plot[domain=0.65:1.45,variable=\t]({1*2.96*cos(\t r)+0*2.96*sin(\t r)},{0*2.96*cos(\t r)+1*2.96*sin(\t r)});
\draw (11.82,-3.34)-- (12.46,-1.64);
\draw (10.38,-0.28)-- (9.82,-2.2);
\draw [shift={(9.38,-4.08)}] plot[domain=0.67:1.31,variable=\t]({1*3.93*cos(\t r)+0*3.93*sin(\t r)},{0*3.93*cos(\t r)+1*3.93*sin(\t r)});
\draw [->] (3.74,-2.4) -- (12.46,-1.64);
\draw [->] (12.46,-1.64) -- (3.74,-2.4);
\begin{scriptsize}
\draw[color=black] (8.46,-1.5) node {$\le1$};
\end{scriptsize}
\end{tikzpicture}
	
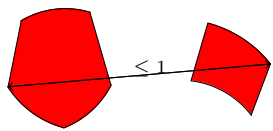
\captionof{figure}{Siamese tiles}
	\label{siamesetiles}
\end{minipage}
\end{center}

1) A tiling can contain two tiles with distance $1$, if the borders are coloured appropriately. (See Figure \ref{1distancetiles}.) A nice tiling cannot.

2) A tiling can contain two tiles with distance less than $1$ if all pairs of points of the two have distance less than $1$. We call such a pair a pair of Siamese tiles. (See Figure \ref{siamesetiles}.) \footnote{For the origin of the name, see for example \href{https://dustingmixon.wordpress.com/2018/06/24/polymath16-eighth-thread-more-upper-bounds/\# comment-5064}{https://dustingmixon.wordpress.com/2018/06/\\24/polymath16-eighth-thread-more-upper-bounds/\# comment-5064}.} A nice tiling cannot.

On the other hand, one restriction of Thomassen is not required in this definition: we do not require tiles to be simply connected, as we will deal with not simply connected tiles in the beginning (Section \ref{sec:converting}).

Note that the statement for the plane also follows from a relatively easy proof using Lemma \ref{lem:nice} and the fact that a triangulated planar graph has $3n-6$ edges.

\subsection{Colouring of spheres}

We can define the chromatic number of a sphere of radius $r$ similarly to the planar case: it is the minimum number of colours needed to colour the points of a sphere of radius $r$ such that no two points with Euclidean distance $1$ have the same colour.

Much less is known of the value of this number compared to the planar case.

It is long known that the chromatic number of a sphere of radius $r$ is at least $4$ if $r\ge\frac{1}{\sqrt{3}}$. For $r>\frac{\sqrt{3}}{2}$ Moser's spindle gives the lower bound, for smaller values, a generalized version of Moser's spindle is used. (Simmons (1976) \cite{s}). Recently, Cherkashin and Voronov \cite{cv} proved the statement for all $r>\frac{1}{2}$, proving the conjecture of Simmons. This lower bound on the radius is sharp: for $r=\frac{1}{2}$, two colours are enough, while for $r<\frac{1}{2}$, only one colour is needed.

It is also known that the chromatic number of any sphere is at most $15$, even with all of the above defined restrictions, as the $3$-dimensional space has a $15$-tiling (Radoičić, Tóth (2003)\cite{rt}), which can be used to generate such a colouring.

Recently, a $7$-colouring of large enough spheres have also been found by Tom Sirgedas, as part of the Polymath 16 project.\cite{si}

Also, the minimal number of colours needed for a nice tiling of a large enough sphere is at least $7$, which follows from Theorem \ref{thm:thomassen}.

The main result of this paper is improving this number to $8$.

Note that some sources mentioned earlier that the chromatic number of all spheres is at most $7$ \cite{bmp} \cite{hdcg}. It seems (from personal communication through Dömötör Pálvölgyi) that the authors expected that a colouring similar to that of Isbell in Figure \ref{fig:Isbell} also works for spheres. The present paper disproves this assumption, though it does not contradict to the chromatic number of spheres being at most $7$.

This result also improves the lower bound for the minimal number of colours needed for a nice tiling of the $3$-dimensional space, for which problem the best known bound was $6$ for the general colouring case (Nechushtan (2002) \cite{n}).

\section{The new result}

\begin{theorem}
\label{thm:main}
There is no nice tiling with at most $7$ colours of a large enough (radius $r\ge\frac{46.5}{\pi}=14.801...$) sphere $S$, even if we generalize the definition and allow tiles not to be simply connected.
\end{theorem}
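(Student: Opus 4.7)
The plan is to derive a contradiction from the hypothesis that $S$ admits a nice $7$-tiling. First I would handle the generalisation allowing non-simply-connected tiles: the construction in Section~\ref{sec:converting} should convert any nice $7$-tiling (with possibly non-simply-connected tiles) into one whose tiles are all simply connected, via a localised surgery exploiting the fact that every hole of a tile lies inside a region of diameter less than $1$. From here on I may assume every tile is simply connected.

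Next I would set up the combinatorics on the sphere. Form the tile-adjacency graph $G$ on $S$: vertices are tiles, edges are pairs sharing a boundary arc. As $G$ is a graph embedded on the sphere, Euler's formula gives $|V(G)| - |E(G)| + |F(G)| = 2$. The diameter bound ($<1$) together with the same-colour distance bound ($>1$) forces strong local constraints: every neighbour of a tile $T$ lies within Euclidean distance $1$ of $T$, two neighbours of $T$ cannot share $T$'s colour, and Lemma~\ref{lem:nice} further pins down the cyclic colour pattern around each tile, generically resembling the hexagonal Isbell configuration.

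The third and most substantial step is a spherical Euler argument. From $V - E + F = 2$ one deduces the identity $\sum_v (6-\deg v) + \sum_f (6-2|f|) = 12$. On a torus or the plane the corresponding right-hand side is $0$, which is exactly why a hexagonal Isbell-style nice $7$-tiling is possible there; on the sphere the right-hand side is $+12$, forcing \emph{defects} (tiles of degree less than $6$, or faces of size greater than $3$). The heart of the proof is to show that any such defect, together with the cascade of unit-distance constraints, prevents a $7$-colouring of its neighbourhood: one would exhibit an explicit unit-distance configuration (likely a Moser-spindle-style gadget extended to exploit the nice-tiling axioms) whose chromatic number is at least $8$ when realised around a defect. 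The hypothesis $r \geq 46.5/\pi$ presumably guarantees that this gadget fits on the sphere with the near-Euclidean accuracy needed for unit distances to behave planarly.

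The main obstacle I anticipate is this last step: identifying an explicit local obstruction that is forced by a defect and cannot be $7$-coloured under the nice-tiling axioms. The reduction to simply connected tiles and the Euler-identity setup are technical but follow established patterns; the genuinely new content lies in the geometric combinatorics around a pentagonal (or higher-defect) tile, a case analysis of adjacency patterns in which each case is ruled out by exhibiting a forced chain of unit-distance constraints that demands the eighth colour.
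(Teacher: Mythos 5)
Your setup is sound through the Euler identity: the graph $G$ is fully triangulated with all degrees at most $6$, and the total defect $\sum_v(6-\deg v)=12$ is indeed the engine of the proof. But the step where you propose to conclude — exhibiting ``an explicit unit-distance configuration... whose chromatic number is at least $8$ when realised around a defect'' — does not work, and this is precisely the difficulty the paper has to overcome. Locally, any single irregular vertex (or small cluster of them) \emph{can} be nicely $7$-coloured; there is no Moser-spindle-like gadget that gets forced around a defect. Indeed, the cylinder construction in the concluding remarks (Figures 42--43) shows a nicely $7$-colourable triangulation with defects, so no purely local obstruction can exist. The argument must be global, accounting for how all $12$ defects interact across the whole sphere.

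The paper splits into two genuinely different global arguments depending on how the defects cluster. When the defects can be separated into groups whose sizes are not all multiples of $6$ (Case 2), the key insight is that far from defects the nice $7$-colouring must locally agree with an Isbell colouring of the triangular grid (Lemmas \ref{lem:Isbell1}--\ref{lem:Isbell2}); tracing such a colouring around a separating cycle yields a winding-type invariant showing the cycle's combinatorial curvature must be $\equiv 0 \pmod 6$, contradicting Lemma \ref{lem:curvature}, which says the curvature equals $6$ minus the number of enclosed defects. This divisibility argument has no analogue in your proposal. When the defects all cluster together, or split into two groups of exactly $6$ (Case 1), the mod-$6$ obstruction vanishes, and the paper instead uses a geometric argument: it contracts a short cycle from near one defect cluster to near the other (or antipodally) via Lemma \ref{lem:c2c3}, and derives a contradiction with the sphere's diameter because the cycles stay short ($\le 44$ edges, hence length $<44d_1$) while they must at some point nearly reach an antipodal pair. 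This is where the radius bound $r\ge 46.5/\pi$ actually enters — not, as you guessed, to make unit distances behave planarly. Your proposal would also need to rethink the handling of non-simply-connected tiles: the paper does not perform surgery to make them simply connected, but instead discards tiles lying entirely inside ``small components'' of $S\setminus T_i$ or $S\setminus(T_i\cup T_j)$ and shows (Lemma \ref{lem:unitdisk}) that the remaining graph still covers the sphere finely enough.
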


In order to make the proof more legible, we give an outline of the main steps.

Suppose that there exists such a tiling $(\mathcal{T},f)$ of $S$. Now get rid of all the tiles that are in regions "completely surrounded" by one or two tiles. Then we define an adjacency graph $G$ on the remaining tiles such that $G$ is a triangulated planar graph.

It can be easily seen from the tiling being nice that all degrees of $G$ are at most $6$, so it has at most $12$ vertices with degree less than $6$ (exactly $12$ if counted with multiplicity given by the differences of $6$ and the degrees of these vertices). These vertices are called irregular vertices.

Also, for some subsets of $G$ that only have vertices with degree $6$, there exists a function to an infinite triangular grid such that the mapping is a local isomorphism in all vertices.

Now we have two cases.

The first case is actually made up of two subcases: either all irregular vertices are `close˙ to each other, or they can be separated into two groups of cardinality $6$ (counted with multiplicity), where the elements of the two groups have a large enough distance from each other, while inside one group, the distances are bounded. In this case, we can construct two cycles $c_2$ and $c_3$ of bounded length separating these two groups (or in case there is only one group, we divide them into two groups beforehand), which are close enough to the first and the second group, respectively. Then again we get a contradiction from the mapping of the part between the two cycles to the triangular grid: we find a cycle in this part that goes through two nearly antipodal points, but its graph length is not larger than $max(l(c_2),l(c_3))$.

The second case is when there is at least one way to divide the irregular vertices into two groups such that no two points from different groups are close to each other and the cardinality of the two groups (counted with multiplicity) is not divisible by $6$. In this case, we get a contradiction by examining the exact way to colour parts of the infinite triangular grid.

\begin{center}
\begin{minipage}{.3\textwidth}
	\centering
	\usetikzlibrary{arrows}
\begin{tikzpicture}[line cap=round,line join=round,scale=0.2]
\clip(-2.3,-10.42) rectangle (11.5,3.62);
\draw(4.56,-3.66) circle (6.31cm);
\begin{scriptsize}
\fill [color=black] (7.6,-4.12) circle (5pt);
\fill [color=black] (7.28,-4.5) circle (5pt);
\fill [color=black] (7.28,-4.04) circle (5pt);
\fill [color=black] (7,-4.24) circle (5pt);
\fill [color=black] (7.06,-3.72) circle (5pt);
\fill [color=black] (7.6,-3.52) circle (5pt);
\fill [color=black] (6.64,-3.84) circle (5pt);
\fill [color=black] (6.76,-3.2) circle (5pt);
\fill [color=black] (7.26,-3.04) circle (5pt);
\fill [color=black] (6.32,-3.52) circle (5pt);
\fill [color=black] (6.52,-4.4) circle (5pt);
\fill [color=black] (6.8,-4.8) circle (5pt);
\end{scriptsize}
\end{tikzpicture}
	\label{Case1}
\end{minipage}
\begin{minipage}{.3\textwidth}
	\centering
	\usetikzlibrary{arrows}
\begin{tikzpicture}[line cap=round,line join=round,scale=0.2]
\clip(-2.3,-10.42) rectangle (11.5,3.62);
\draw(4.56,-3.66) circle (6.31cm);
\begin{scriptsize}
\fill [color=black] (7.6,-4.12) circle (5pt);
\fill [color=black] (7.28,-4.5) circle (5pt);
\fill [color=black] (7.28,-4.04) circle (5pt);
\fill [color=black] (6.9,-3.84) circle (5pt);
\fill [color=black] (2.54,-2.26) circle (5pt);
\fill [color=black] (2.1,-2.96) circle (5pt);
\fill [color=black] (1.98,-2.44) circle (5pt);
\fill [color=black] (2.9,-2.6) circle (5pt);
\fill [color=black] (2.44,-2.66) circle (5pt);
\fill [color=black] (2.62,-3.14) circle (5pt);
\fill [color=black] (6.8,-4.26) circle (5pt);
\fill [color=black] (6.8,-4.8) circle (5pt);
\end{scriptsize}
\end{tikzpicture}
	\label{Case2}
\end{minipage}
\begin{minipage}{.3\textwidth}
	\centering
	\usetikzlibrary{arrows}
\begin{tikzpicture}[line cap=round,line join=round,scale=0.2]
\clip(-2.3,-10.42) rectangle (11.5,3.62);
\draw(4.56,-3.66) circle (6.31cm);
\begin{scriptsize}
\fill [color=black] (7.6,-4.12) circle (5pt);
\fill [color=black] (7.28,-4.5) circle (5pt);
\fill [color=black] (7.28,-4.04) circle (5pt);
\fill [color=black] (6.9,-3.84) circle (5pt);
\fill [color=black] (7.32,-3.58) circle (5pt);
\fill [color=black] (2.1,-2.96) circle (5pt);
\fill [color=black] (1.98,-2.44) circle (5pt);
\fill [color=black] (7.06,-1.46) circle (5pt);
\fill [color=black] (3.14,-0.12) circle (5pt);
\fill [color=black] (2.62,-3.14) circle (5pt);
\fill [color=black] (4.52,-7) circle (5pt);
\fill [color=black] (6.8,-4.8) circle (5pt);
\end{scriptsize}
\end{tikzpicture}
	\label{Case3}
\end{minipage}

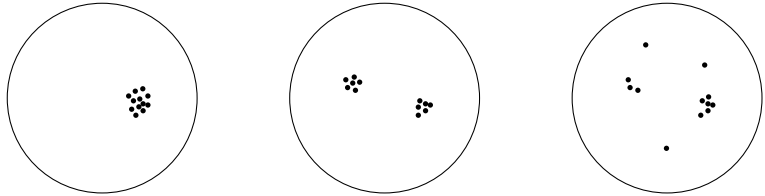
\captionof{figure}{Case 1a, Case 1b and Case 2}
\end{center}

Now we continue with a detailed proof.

First, note that although we use Euclidean distances in Theorem \ref{thm:main}, in the following we will use spherical distances. It only affects Theorem \ref{thm:main} in a way that we have to prove the statement from a smaller minimum radius ($r\ge17.9$) and by a conversion in the end, we get the original Theorem.

\subsection{Converting the problem to a graph problem}
\label{sec:converting}

In this section, we convert Theorem \ref{thm:main} into a theorem about fully triangulated graphs drawn on a sphere, which we will prove in the next section.

In the following, for any graph $X$, we denote the set of its vertices by $V(X)$ and the set of its edges by $E(X)$ and we denote the vertices of any graph $X$ by $\vtx_1(X),\vtx_2(X),...,\vtx_{\left\lvert V(X)\right\rvert}(X)$.

Also, from now on, cycles always will mean directed cycles, and if $X$ is a cycle, enumerate its vertices in the order they appear on $X$ (starting at an arbitrary vertex of $X$) with regard to the direction of $X$. In this case, use the indices $mod\lvert V(X)\rvert$.

Suppose that we have a $7$-tiling $(\mathcal{T},f)$ of $S$. Note that since all disks of diameter $1$ on $S$ can contain points of at most one tile per colour and $S$ can be covered with finitely many such disks, the family of tiles is finite, so we can write $\mathcal{T}=\left\lbrace T_1,...,T_n\right\rbrace$.

Call two tiles $T_i$ and $T_j$ adjacent if $\left(Cl\left(T_i\right)\cap T_j\right)\cup\left(T_i\cap Cl\left(T_j\right)\right)\neq\emptyset$.

\begin{lemma}
\label{lem:smallcomponent1}
If $r\ge\frac{2}{\pi}$ and $i\in\left\lbrace 1,...,n\right\rbrace$, then there is a unique connected component of $S\setminus T_i$, which contains all points of $S$ with the exception of at most an open disk of radius $1$.
\end{lemma}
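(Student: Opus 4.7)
The plan is to trap $T_i$ inside an open spherical disk $B$ of radius $1$ and then exhibit the promised big component of $S \setminus T_i$ as the one that absorbs the closed cap $S \setminus B$. Uniqueness and the size bound will come out as immediate byproducts.

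First, I would fix any basepoint $p \in T_i$. Since the tiling is nice, $\mathrm{diam}(T_i) < 1$, so every $q \in T_i$ satisfies $d_S(p,q) < 1$, and hence $T_i \subseteq B_1(p)$, the open spherical disk of radius $1$ centered at $p$. The hypothesis $r \geq \tfrac{2}{\pi}$ then gives $\pi r \geq 2 > 1$, and since $\pi r$ is the spherical diameter of $S$, we have $B_1(p) \neq S$. Its complement $K := S \setminus B_1(p)$ is a non-empty closed spherical cap of radius $\pi r - 1$ around the antipode of $p$. As a closed spherical cap it is connected, and $T_i \subseteq B_1(p)$ forces $K \subseteq S \setminus T_i$.

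To finish, $K$ is a connected subset of $S \setminus T_i$, so it lies in a single connected component $C_0$ of $S \setminus T_i$. Any other connected component is disjoint from $C_0$, hence from $K$, hence contained in $S \setminus K = B_1(p)$; therefore $S \setminus C_0 \subseteq B_1(p)$, and everything outside $C_0$ sits in an open disk of radius $1$, as required. Uniqueness of $C_0$ is automatic, because no other component of $S \setminus T_i$ can meet the non-empty set $K$ that $C_0$ already contains.

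I do not foresee a significant obstacle here; the whole argument is soft point-set topology plus the trivial spherical-geometry observation that on a sphere of radius $r$ the complement of an open disk of radius $\rho < \pi r$ is a (connected) closed cap. The hypothesis $r \geq \tfrac{2}{\pi}$ is precisely what is needed to apply this with $\rho = 1$, with room to spare.
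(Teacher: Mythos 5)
Your argument is correct and is essentially the paper's own proof: both enclose $T_i$ in an open unit disk around one of its points, observe that the complementary closed cap is connected and disjoint from $T_i$, and read off existence and uniqueness of the large component from there. No further comment is needed.
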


\begin{proof}
If we take an open disk $D$ of radius $1$ around any point of $T_i$, it covers $T_i$, so $S\setminus D\subseteq S\setminus T_i$ and since $S\setminus D$ is connected, all of its points are in the same connected component of $S\setminus T_i$. And this component is unique as $S\setminus D$ is a closed disk of radius $r\pi-1$, so none of the other components satisfy the property described in the statement of the lemma.
\end{proof}

\begin{lemma}
\label{lem:smallcomponent2}
If $r\ge\frac{2}{\pi}$ and the adjacent tiles $T_i,T_j\in\mathcal{T}$ both have spherical diameter less than $1$, then there is a unique connected component of $S\setminus\left(T_i\cup T_j\right)$, which contains all points of $S$ with the exception of at most an open disk of radius $1$.
\end{lemma}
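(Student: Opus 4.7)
The plan is to mimic the proof of Lemma~\ref{lem:smallcomponent1}, replacing the role of a single tile with a pivot point near which both $T_i$ and $T_j$ accumulate, so that $T_i\cup T_j$ still sits inside an open spherical disk of radius $1$. The adjacency hypothesis supplies such a pivot: by definition there exists $p\in\bigl(cl(T_i)\cap T_j\bigr)\cup\bigl(T_i\cap cl(T_j)\bigr)$, and after relabeling I may assume $p\in cl(T_i)\cap T_j$.

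First I would verify that $T_i\cup T_j\subseteq D$, where $D$ denotes the open spherical disk of radius $1$ centered at $p$. For any $y\in T_j$ this is immediate since $p\in T_j$ and $\operatorname{diam}(T_j)<1$. For $x\in T_i$, I pick a sequence $(y_n)\subset T_i$ converging to $p$ (which exists because $p\in cl(T_i)$), and apply the triangle inequality $d(x,p)\le d(x,y_n)+d(y_n,p)\le\operatorname{diam}(T_i)+d(y_n,p)$; passing to the limit gives $d(x,p)\le\operatorname{diam}(T_i)<1$, so $x\in D$.

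From here the argument runs exactly as in Lemma~\ref{lem:smallcomponent1}. Since $r\ge 2/\pi$, the set $S\setminus D$ is a closed spherical disk of radius $r\pi-1\ge 1$, which is nonempty and connected, and it is contained in $S\setminus(T_i\cup T_j)$. Hence all its points lie in a single connected component of $S\setminus(T_i\cup T_j)$; the remaining points of $S\setminus(T_i\cup T_j)$ sit inside $D$, an open disk of radius $1$. This gives the existence half of the statement, and the same inclusion forces any other component of $S\setminus(T_i\cup T_j)$ to be contained in $D$, giving uniqueness of the ``large'' component.

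The argument is essentially routine; the only step worth flagging is the distance bound for $x\in T_i$, because the pivot $p$ need not belong to $T_i$ itself and so $\operatorname{diam}(T_i)$ cannot be used directly. The strictness of the hypothesis $\operatorname{diam}(T_i)<1$ is what makes the limit argument safe: it yields $d(x,p)\le\operatorname{diam}(T_i)$, which is still strictly less than $1$, so $T_i$ really lies in the \emph{open} disk $D$ and no boundary-case issue arises.
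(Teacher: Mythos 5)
Your proof is correct and follows the same route as the paper's: take the open disk $D$ of radius $1$ centered at a point witnessing the adjacency of $T_i$ and $T_j$, observe $T_i\cup T_j\subseteq D$, and argue that the connected closed disk $S\setminus D$ pins down the unique large component. The only difference is that you explicitly justify the inclusion $T_i\subseteq D$ when the pivot $p$ lies only in $cl(T_i)$ rather than $T_i$ itself (via the limit/triangle-inequality argument), a point the paper glosses over by simply asserting that $D$ ``covers both $T_i$ and $T_j$''; this is a welcome clarification, not a deviation in approach.
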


\begin{proof}
If we take an open disk $D$ of radius $1$ around a common border point of $T_i$ and $T_j$, it covers both $T_i$ and $T_j$, so $S\setminus D\subseteq S\setminus\left(T_i\cup T_j\right)$ and since $S\setminus D$ is connected, all of its points are in the same connected component of $S\setminus\left(T_i\cup T_j\right)$. And this component is unique as $S\setminus D$ is a closed disk of radius $r\pi-1>1$, so none of the other components satisfy the property described in the statement of the lemma.
\end{proof}

For any $T_i\in\mathcal{T}$ or $T_i,T_j\in\mathcal{T}$ (where $T_i$ and $T_j$ are adjacent), call the unique component described above the large component of $S\setminus T_i$ or $S\setminus\left(T_i\cup T_j\right)$, respectively, and all the other components the small components.

Now we will define a graph $G$ and simultaneously draw it on the surface of $S$.

First, define a subfamily $\mathcal{T'}$ of $\mathcal{T}$ that consists only of the tiles that are not included in any small component of $S\setminus T_i$ for some $i=1,...,n$, neither are they included in any small component of $S\setminus\left(T_i\cup T_j\right)$ for some $i,j=1,...,n$, $i\neq j$.

Now take an arbitrary point $\vtx_i(G)$ from each tile $T_i\in\mathcal{T}'$ and also, take a point $\pi_{i,j}=\pi_{j,i}$ on the common border of all adjacent tiles $T_i$ and $T_j$.

Next, take a topological tree $\tau_i$ fully inside $T_i$, whose leaves are the above defined $\pi_{i,j}$'s and which includes $\vtx_i(G)$. Such a tree exists, because $T_i$ is connected, thus, we can add the branches ending in the $\pi_{i,j}$'s one by one. (Note that in the optimal case, $\tau_i$ would be a topological star centered around $\vtx_i(G)$, but this is not always possible due to our very generous definition of tiles, which allows cutting points.) Now for all adjacent pairs $T_i,T_j\in\mathcal{T}'$, draw an edge between $\vtx_i(G)$ and $\vtx_j(G)$, which follows the path connecting $\vtx_i(G)$ and $\pi_{i,j}$ within $\tau_i$ and then the path connecting $\pi_{j,i}$ and $\vtx_j(G)$ within $\tau_j$. Call the resulting graph $G$ and denote this drawing of $G$ by $G_0$. The interiors of the edges of $G_0$ might overlap, but they do not cross each other at all, so with a slight movement, we can make them non-overlapping, thus creating the final drawing of $G$.\footnote{To make some disclaimer about the term "slight movement": apart from making all of the interiors of the edges of $G$ disjoint, the only requirement is to keep the statement of Theorem \ref{lem:distance} true. Technically, even a non-movement that only creates a virtual ordering among the edges would be satisfactory.}

\begin{center}
	\centering
	\input{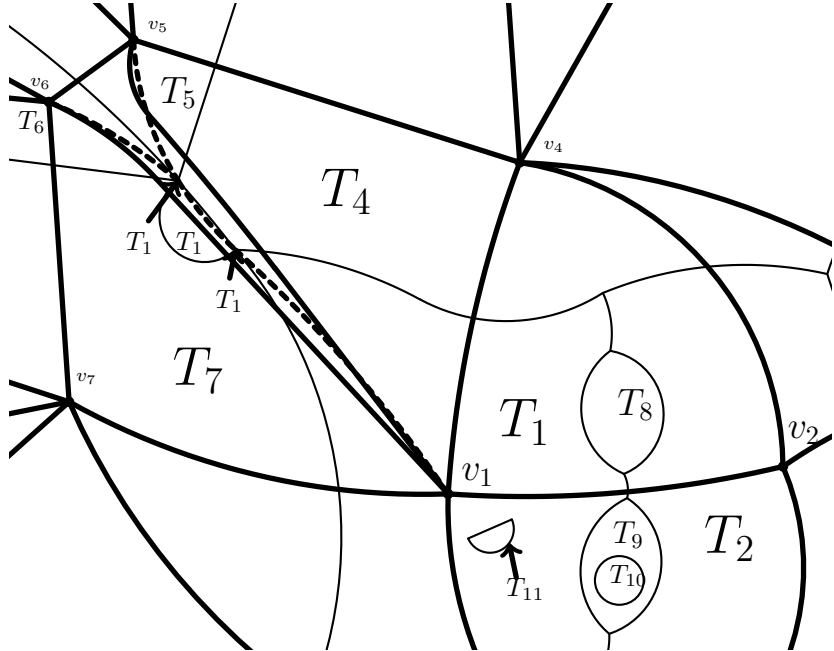}
	\captionof{figure}{A portion of $\mathcal{T}$ (the borders are denoted by the thin lines, and in case it matters, it is also denoted where the border points belong) and the corresponding section of $G$ (the edges are denoted by thick lines). The dashed line represents the edges $\left(\vtx_1(G),\vtx_5(G)\right)$ and $\left(\vtx_1(G),\vtx_6(G)\right)$ of $G_0$. $T_8$, $T_9$, $T_{10}$ and $T_{11}$ are in the small components for at least one tile or at least one pair of tiles, so no vertices belong to them.}
	\label{smallparts}
\end{center}

\begin{definition}
We call a colouring of a graph \emph{nice}, if there are no two different vertices within graph distance at most $2$, which are coloured with the same colour. Alternatively, a nice colouring of $G$ can be defined as a colouring, which is also a proper colouring for $G^2$ (the square of $G$).
\end{definition}

\begin{lemma}
\label{lem:nice}
The tiling $\left(\mathcal{T},f\right)$ of $S$ being nice means that applying the same colouring to the corresponding vertices of $G$ is a nice colouring.
\end{lemma}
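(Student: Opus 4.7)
The plan is to translate the niceness of the tiling directly into a bound on the spherical distance between tiles whose corresponding vertices are close in $G$, and then invoke the hypothesis that distinct tiles of the same colour have distance strictly greater than $1$. Recall that by niceness, any two distinct tiles $T_i, T_j$ with $d(T_i, T_j) \le 1$ must satisfy $f(i) \ne f(j)$.

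Concretely, I would verify the two cases of graph distance separately. First, suppose $(\vtx_i(G), \vtx_j(G))$ is an edge of $G$. By construction of $G$, this edge was drawn precisely because $T_i$ and $T_j$ are adjacent in the earlier sense, so there exists a point $p \in (\mathrm{cl}(T_i) \cap T_j) \cup (T_i \cap \mathrm{cl}(T_j))$. In either case $p \in \mathrm{cl}(T_i) \cap \mathrm{cl}(T_j)$, which forces $d(T_i, T_j) = 0 < 1$, so niceness gives $f(i) \ne f(j)$.

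Second, suppose $\vtx_i(G)$ and $\vtx_j(G)$ are at graph distance exactly $2$, via an intermediate vertex $\vtx_k(G)$. The adjacencies $T_i \sim T_k$ and $T_k \sim T_j$ yield points $p, q \in \mathrm{cl}(T_k)$ with $p \in \mathrm{cl}(T_i)$ and $q \in \mathrm{cl}(T_j)$. Since $T_k$ has spherical diameter less than $1$ (by niceness), so does $\mathrm{cl}(T_k)$, and hence $d(p, q) < 1$. Therefore
\[
d(T_i, T_j) = d(\mathrm{cl}(T_i), \mathrm{cl}(T_j)) \le d(p, q) < 1,
\]
and niceness again forces $f(i) \ne f(j)$. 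Combining the two cases yields that any two distinct vertices of $G$ at graph distance at most $2$ carry different colours, which is the definition of a nice colouring.

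The only subtlety is the equality $d(T_i, T_j) = d(\mathrm{cl}(T_i), \mathrm{cl}(T_j))$ used above, which follows from the general fact that the infimum distance between two subsets of a metric space equals that between their closures. No other delicate step arises, so the lemma reduces to the adjacency definition plus the diameter bound.
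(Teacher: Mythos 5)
Your proof is correct and follows the same route as the paper's: adjacency forces distance $0$, a common neighbour of diameter less than $1$ forces distance less than $1$, and in either case niceness of the tiling forbids equal colours. You have merely spelled out in full the two one-line observations the paper makes.
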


\begin{proof}
All adjacent tiles in $\mathcal{T}$ have a spherical distance of $0$ per definition and all tiles in $\mathcal{T}$ that are adjacent to the same tile have a spherical distance less than $1$ per definition. Thus, no tiles with graph distance at most $2$ in $G$ can have the same colour in a nice tiling.
\end{proof}

\begin{lemma}
\label{lem:unitdisk}
Any (open or closed) disk $D$ on $S$ with radius at least $1$ contains at least one vertex from $G$.
\end{lemma}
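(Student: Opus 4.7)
The plan is to treat $D$ as the open ball $B(p,1)$ (larger disks trivially contain this one) and locate a tile $T_i \in \mathcal{T}'$ with $v_i(G) \in D$. First I examine the tile $T_p$ containing the centre $p$. If $T_p \in \mathcal{T}'$, I am done immediately: since $v_p(G) \in T_p$, $T_p$ has diameter less than $1$, and $p \in T_p$, I get $|v_p(G) - p| < 1$, so $v_p(G) \in D$.

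Otherwise $T_p$ is trapped inside a small component, and I build a strictly nested chain $U_0 \subsetneq U_1 \subsetneq \cdots$ of small components, where $U_k$ is a small component of $S \setminus T_{i_k}$ (or of $S \setminus (T_{i_k} \cup T_{j_k})$) containing the bounding tile of $U_{k-1}$ (respectively the bounding pair). Nesting is forced by connectedness: $U_{k-1} \cup T_{i_{k-1}}$ is a connected subset of $S \setminus T_{i_k}$ that contains $T_{i_{k-1}} \subseteq U_k$, hence lies entirely in $U_k$, and strict nesting follows because $T_{i_{k-1}} \subseteq U_k \setminus U_{k-1}$. Finiteness of $\mathcal{T}$ forces termination at some $U^*$ whose bounding tile $T_{i^*}$, or adjacent pair $(T_{i^*}, T_{j^*})$, lies in $\mathcal{T}'$, and $T_p \subseteq U^*$.

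The decisive trick is the one-tile termination. The proof of Lemma~\ref{lem:smallcomponent1} places every small component of $S \setminus T_{i^*}$ inside the open unit disk around \emph{any} point of $T_{i^*}$, so I apply it with that point taken to be $v_{i^*}(G)$ itself. This yields $U^* \subseteq B(v_{i^*}(G), 1)$, and combined with $p \in U^*$ it gives $v_{i^*}(G) \in B(p,1) = D$, exactly as required. The two-tile termination case is the main technical obstacle, since the common border point furnished by Lemma~\ref{lem:smallcomponent2} need not be a vertex of $G$, so the direct substitution trick no longer applies. I expect to resolve it by further case analysis: if the component of $S \setminus T_{i^*}$ (or of $S \setminus T_{j^*}$) containing $U^*$ is itself small, then the one-tile argument applied to that single free tile already captures $v_{i^*}(G)$ or $v_{j^*}(G)$ inside $D$; otherwise $U^*$ is genuinely squeezed between the two free tiles around $q'$, and a direct geometric estimate using the diameter bounds on $T_{i^*}$ and $T_{j^*}$ together with $q' \in D$ is used to place at least one of the two candidate vertices inside $D$.
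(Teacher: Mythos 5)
Your one-tile case is fine and matches the paper's idea exactly. The two-tile termination case, however, is where your proof genuinely breaks down, and you admit as much. The fallback you sketch does not work: if $q'$ is a common border point of $T_{i^*}$ and $T_{j^*}$, then Lemma~\ref{lem:smallcomponent2} gives $dist_S(O,q')<1$ and the diameter bound gives $dist_S(q',v_{i^*}(G))<1$, so the triangle inequality only yields $dist_S(O,v_{i^*}(G))<2$, which is useless. Your other sub-case (where $U^*$ also sits in a small component of $S\setminus T_{i^*}$ alone) need not occur, so it cannot carry the argument. The idea you are missing, and the one the paper uses, handles both termination cases uniformly by convexity: take a great circle through $O$. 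Since $O$ is trapped in a small component whose boundary lies in $cl(T_j)\cup cl(T_k)$ (or in $cl(T_j)$ alone), the circle exits the component on each side of $O$, and one can arrange, WLOG, that both exit points lie in the closure of the same bounding tile, say $cl(T_j)$. Both exit points then lie within spherical distance $1$ of $v_j(G)$; because a spherical disk of radius $1$ is geodesically convex on a sphere of radius at least $2/\pi$, the shorter arc between the two exit points, which contains $O$, also lies in that disk, so $dist_S(O,v_j(G))<1$ and $v_j(G)\in D$. Without this (or an equivalent) convexity step, your two-tile case remains open.

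A secondary concern: your strictly-nested-chain construction is heavier than the paper's proof and has unexamined edge cases of its own. When the bounding object of $U_{k-1}$ is a pair, you have not verified that both tiles of the pair land in one small component at the next stage, and your nesting argument ($U_{k-1}\cup T_{i_{k-1}}$ is connected and misses $T_{i_k}$) silently assumes $U_{k-1}\cap T_{i_k}=\emptyset$, which is not checked. The paper dispenses with the chain entirely; once you adopt the great-circle argument, you can delete this machinery as well.
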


\begin{proof}
If the center $O$ of $D$ belongs to a tile from $\mathcal{T'}$, then the vertex representing it has distance less than $1$ from it, so it is inside $D$.

If $O$ belongs to a tile $T_i$ from $\mathcal{T}\setminus\mathcal{T'}$, then either it is in the small component of $S\setminus T_j$ for some $T_j\in\mathcal{T}$ or it is in the small component of $S\setminus\left(T_j\cup T_k\right)$ for some $T_j,T_k\in\mathcal{T}$ (note that $T_i$ is not necessarily bordering any of these tiles). And there exists a line through $O$, which contains at least one point from $Cl\left(T_j\right)$ or from $Cl\left(T_k\right)$, we can assume without loss of generality that they are from $Cl\left(T_j\right)$. And since these two points are within an open unit disk around $\vtx_j(G)$, all of the shorter line segment connecting them is also in this unit disk, and thus, $O$ too. So $\vtx_j(G)$ is also within an open unit disk around $O$.
\end{proof}

\begin{lemma}
\label{lem:distance}
If $\vtx_i(G)$ and $\vtx_j(G)$ are adjacent vertices in $G$, they have a distance less than $2$ and (in case the drawing of $G$ was appropriately constructed from $G_0$) all points of the edge $\left(\vtx_i(G),\vtx_j(G)\right)$ have a distance less than $1$ from at least one $\vtx_i(G)$ and $\vtx_j(G)$.
\end{lemma}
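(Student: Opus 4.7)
The plan is to use the construction of $G_0$ together with the niceness assumption that every tile has spherical diameter strictly less than $1$, and then to show that the final drawing of $G$ inherits the property after a sufficiently small perturbation.

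First I would observe that in $G_0$ the edge $(\vtx_i(G),\vtx_j(G))$ is, by construction, the concatenation of a path from $\vtx_i(G)$ to $\pi_{i,j}$ inside the tree $\tau_i\subset T_i$ and a path from $\pi_{j,i}=\pi_{i,j}$ to $\vtx_j(G)$ inside $\tau_j\subset T_j$. Since $\vtx_i(G)\in T_i$, $\pi_{i,j}\in \mathrm{cl}(T_i)$ and $\mathrm{diam}(T_i)<1$, every point of the first half-path lies at spherical distance strictly less than $1$ from $\vtx_i(G)$, and by the same argument every point of the second half-path lies at distance strictly less than $1$ from $\vtx_j(G)$. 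In particular the two vertices themselves satisfy $d(\vtx_i(G),\vtx_j(G))\le d(\vtx_i(G),\pi_{i,j})+d(\pi_{i,j},\vtx_j(G))<2$, which gives the first claim immediately (and this claim does not depend on which drawing we pick, only on the graph structure).

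Next I would handle the second claim for the final drawing of $G$. The only difference between $G_0$ and the final drawing is the "slight movement" performed to eliminate overlaps of interiors of edges. The key point is that the inequality established above is \emph{strict}: on the edge in $G_0$ we have $d(\cdot,\vtx_i(G))<1$ on the $T_i$-half and $d(\cdot,\vtx_j(G))<1$ on the $T_j$-half, and by compactness each half achieves a maximum distance $\delta_i<1$, $\delta_j<1$ from the respective endpoint. Choose the perturbation small enough that every point moves by less than $\min_{i,j}(1-\delta_i,1-\delta_j)/2$; then every point of the perturbed edge on the $T_i$-side is still within distance $1$ of $\vtx_i(G)$, and every point on the $T_j$-side is within distance $1$ of $\vtx_j(G)$. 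This is exactly the kind of constraint the footnote alludes to when it allows us to choose the perturbation as we please, so there is no obstruction.

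The main (and essentially only) subtle point is the last one: making precise that an arbitrarily small perturbation suffices. This is a standard compactness/uniform-continuity argument, but one has to be mindful that the perturbation is carried out simultaneously for all edges in a finite graph; since there are only finitely many tiles and finitely many edges, a single uniform choice of perturbation radius works for all of them at once, and the two conclusions of the lemma then hold simultaneously throughout the final drawing of $G$.
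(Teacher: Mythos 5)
Your proposal is correct and follows essentially the same route as the paper: the distance bound comes from the two half-paths lying in $T_i$ and $T_j$ (each of diameter less than $1$) meeting at the common border point $\pi_{i,j}$, and the second claim survives the passage from $G_0$ to the final drawing because the perturbation can be taken small enough. Your compactness argument merely makes explicit the "small enough" clause that the paper leaves informal.
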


\begin{proof}
For $G_0$, this follows per definition: the first half is true since $\vtx_i(G)\in T_i$ and $\vtx_j(G)\in T_j$, the diameter of both tiles is less than $1$ and they have a common border point, while the second half follows from the fact that all points of the edge $\left(\vtx_i(G),\vtx_j(G)\right)$ are either in $T_i$ or in $T_j$. If the changes applied when constructing the final drawing of $G$ were small enough, the second half is also true, while the first one is true per definition as the placement of vertices did not change.
\end{proof}

\begin{lemma}
\label{lem:fullytriangulated}
$G$ is a fully triangulated planar graph.
\end{lemma}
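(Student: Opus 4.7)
\textbf{Proof plan for Lemma \ref{lem:fullytriangulated}.}

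Planarity is essentially built into the construction. The preliminary drawing $G_0$ was obtained by concatenating, for each edge $(\vtx_i(G),\vtx_j(G))$, a path inside the tree $\tau_i\subseteq T_i$ and a path inside $\tau_j\subseteq T_j$, meeting at the common border point $\pi_{i,j}$. Since distinct trees $\tau_i,\tau_j$ meet only in points of the form $\pi_{i,j}$, the only way two edges of $G_0$ can intersect outside their endpoints is that they share a tree branch inside some single tile or share an endpoint $\pi_{i,j}$. The ``slight movement'' used to pass from $G_0$ to $G$ precisely removes these overlaps without introducing crossings, so $G$ ends up embedded in $S$ without crossings, hence planar.

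For full triangulation I would take an arbitrary face $F$ of the embedding of $G$ on $S$, let $\vtx_{i_1}(G),\dots,\vtx_{i_k}(G)$ be its boundary vertices in cyclic order, and aim to show $k=3$. Each edge of $\partial F$ crosses a tile boundary exactly once, at some $\pi_{i_j,i_{j+1}}$, so consecutive tiles along this cycle are adjacent in $\mathcal{T}$. Since $F$ is a face, its interior contains no vertex of $G$, hence no tile of $\mathcal{T}'$. Any tile of $\mathcal{T}\setminus\mathcal{T}'$ lying in $F$ must by definition belong to a small component of $S\setminus T_a$ or of $S\setminus(T_a\cup T_b)$ for some tiles $T_a,T_b\in\mathcal{T}$. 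Combining this with Lemmas \ref{lem:smallcomponent1}, \ref{lem:smallcomponent2} and the fact that $\partial F$ hugs tile boundaries, I would argue that $F$ is topologically nothing more than a small neighbourhood of a common ``corner'' point $p$ where the tiles $T_{i_1},\dots,T_{i_k}$ all meet.

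Assuming $k\ge 4$, I would derive a contradiction as follows. Near the corner $p$, every two of the $k$ boundary tiles have $p$ in their closures. After accounting for which tile owns the point $p$ (or for nearby overlaps dictated by the Jordan-curve structure of the tile boundaries), I would show that some non-consecutive pair $T_{i_a},T_{i_b}$ on $\partial F$ satisfies $\bigl(\mathrm{cl}(T_{i_a})\cap T_{i_b}\bigr)\cup\bigl(T_{i_a}\cap\mathrm{cl}(T_{i_b})\bigr)\neq\emptyset$. That pair is then adjacent in the sense of the paper, so $G$ contains the edge $(\vtx_{i_a}(G),\vtx_{i_b}(G))$, drawn through $\pi_{i_a,i_b}$ near $p$; after the perturbation this edge crosses the interior of $F$, splitting $F$ into two, contradicting the assumption that $F$ is a single face.

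The main obstacle is the final step: handling corners where four or more tiles meet. Here one must carefully use the definition of $\mathcal{T}'$ (no tile is ``trapped'' in a small component of one or two tiles, so no auxiliary tiles can obstruct the argument at $p$) together with the freedom in choosing the anchor points $\pi_{i,j}$ and the perturbation. I expect that in practice one picks the $\pi_{i,j}$ and the perturbation in sufficiently general position so that each multi-tile corner is resolved into several triangular faces, each corresponding to a triple of mutually adjacent tiles, which then makes every face of $G$ a triangle.
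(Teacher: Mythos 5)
There is a genuine gap at exactly the point you flag as ``the main obstacle,'' and the way you propose to close it does not work. Your argument reduces everything to the claim that a face $F$ with boundary tiles $T_{i_1},\dots,T_{i_k}$ is a small neighbourhood of a single corner point $p$ lying in the closure of all $k$ tiles. That claim is unjustified and false in general: the interior of $F_0$ can contain, besides large portions of the boundary tiles themselves, entire tiles of $\mathcal{T}\setminus\mathcal{T}'$ (this is precisely the situation of $T_8,\dots,T_{11}$ in Figure \ref{smallparts}), and the $k$ boundary tiles need not share any common point --- for instance two opposite tiles may meet along a border arc in the middle of the face, or the face may surround a pocket of discarded tiles with no common corner at all. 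Moreover, ``general position'' of the $\pi_{i,j}$ and of the perturbation cannot rescue this: whether $(\vtx_{i_a}(G),\vtx_{i_b}(G))$ is an edge of $G$ is determined by adjacency of the tiles, i.e.\ by the tiling itself, so no choice of anchor points or perturbation can ``resolve a multi-tile corner into triangles'' unless the required adjacencies already exist --- which is the very thing that has to be proved.

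The paper closes this gap with a Jordan-arc argument you would need to reproduce: assuming $T_{i_1}$ and $T_{i_3}$ are \emph{not} adjacent, $F_0\setminus(T_{i_1}\cup T_{i_3})$ is connected, so one can run an arc $\gamma$ from $\vtx_{i_2}(G)$ to $\vtx_{i_4}(G)$ inside it; the first tile of $\mathcal{T}'\setminus\{T_{i_2}\}$ that $\gamma$ enters is adjacent to $T_{i_2}$ (by the same ``first-entry'' argument used for connectivity), must be another boundary tile of $F$ (any other tile of $\mathcal{T}'$ meeting $int(F_0)$ would put a vertex inside the face), and hence supplies a diagonal edge whose drawing must pass through $F_0$, contradicting that $F$ is a face. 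Note also that your proposal omits the connectivity of $G$ entirely; this is a necessary ingredient of ``fully triangulated planar graph'' (a disconnected plane graph has a face bounded by more than one cycle), and the paper proves it explicitly via the same tile-list-along-a-curve device. Your planarity discussion, by contrast, matches the paper's and is fine.
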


\begin{proof}
$G$ is a simple graph as it has finitely many edges, finitely many vertices, no loops and no parallel edges. It is planar per definition as it has finitely many vertices and edges, and it is drawn onto $S$ without any crossings, so by projecting $S$ minus an arbitrary point of $S\setminus G$ onto the plane, it also can be drawn onto the plane.

Also, $G$ is connected: we will take some $i,j\in\lbrace1,2,...,n\rbrace$ for which $T_i,T_j\in\mathcal{T'}$ and show that they are in the same connected component. First, take an arbitrary Jordan curve $\gamma$ connecting a point of $T_i$ and a point of $T_j$, then we can make a list of the tiles intersected by $\gamma$ in order of the first intersection (if we would list them with multiplicity, then this could be an infinite list). We will now show that any tile $T_k\in\mathcal{T}'$ on the list is adjacent to at least one tile from $\mathcal{T}'$ preceding it.

Let $p$ be the point in which $\gamma$ enters $T_k$ (the infimum of the points in $T_k\cap\gamma$, counted along $\gamma$) and let $T_{k'}$ be the tile (or a tile) from which $\gamma$ enters $T_k$ first (to be more precise, in case of $p\notin T_k$, define this as the tile $p$ belongs to, otherwise, calling the section of $\gamma$ ending at $p$, let $T_{k'}$ be any tile for which $p$ is a point of convergence within $\gamma_p$). In case $T_{k'}\in\mathcal{T}'$, $T_k$ and $T_{k'}$ are adjacent per definition, while if $T_{k'}\in\mathcal{T}\setminus\mathcal{T}'$, then $T_{k'}$ is either in the small component of $S\setminus T_k$ or in the small component of $S\setminus\left(T_k\cup T_{k''}\right)$ for some $T_{k''}\in\mathcal{T}'$. And since $T_i\in\mathcal{T}'$, it is not in the same small component, so the first case would lead to a contradiction, while the second would mean that $\gamma$ intersected $T_{k''}$ before $T_k$, and since $T_{k''}$ must be adjacent to $T_k$, this finishing the statement.

Now suppose that $G$ contains a face $G$ that is bordered by more than $3$ edges and denote the corresponding face of $G_0$ by $F_0$. We may assume without loss of generality that the vertices of $F$ are $v_1$, $v_2$, ... $v_k$ in this order and also that the $\vtx_i(G)$ ($i=1,2,...,k$) are on the border of $F_0$, otherwise we could simply move $\vtx_i(G)$ to the last point in $\tau_i$ in which the edges $\left(\vtx_i(G),\vtx_{i-1}(G)\right)$ and $\left(\vtx_i(G),\vtx_{i+1}(G)\right)$ coincide (where the indices are counted $\mod k$). Now suppose that $T_1$ and $T_3$ are not adjacent. In this case, $F_0\setminus\left(T_1\cup T_3\right)$ is connected, thus there exists a Jordan curve $\gamma$ from $\vtx_2(G)$ to $\vtx_4(G)$ within $F_0\setminus\left(T_1\cup T_3\right)$. As above, the first tile $T_j$ from $\mathcal{T'}\setminus\left\lbrace T_2\right\rbrace$ is adjacent to $T_2$ and a starting segment of $\gamma$ connects $\vtx_2(G)$ and $\vtx_j(G)$, whose interior is fully inside $F_0$. If the edge $\left(\vtx_2(G),\vtx_j(G)\right)$ would be outside $F_0$, they would separate $T_1$ from $T_3$ because of Jordan's theorem and because of the fact that $T_j$ is not $T_1$, nor $T_3$. And this is a contradiction because both $T_1$ and $T_3$ are from $\mathcal{T'}$. So the edge should be at least partly within $F_0$, which contradicts the assumption that $F$ is a face of $G$.
\end{proof}

Now, combining Lemmas \ref{lem:nice}, \ref{lem:unitdisk}, \ref{lem:distance} and \ref{lem:fullytriangulated}, we get that Theorem \ref{thm:main} is a consequence of Theorem \ref{thm:main2} applied for $d_1=2$ and $d_2=1$.

\begin{remark}
Note that at this point it is easy to prove the planar case of Thomassen's result with our more generous definition of tiling, since we can construct a similar graph $G$ on the plane, for which the average degree would tend to $6$ due to Lemma \ref{lem:fullytriangulated} by choosing a large enough section of the plane, but in a colouring with 6 colours, no vertices can have degree greater than $5$ due to Theorem \ref{lem:nice}, a contradiction.
\end{remark}

\subsection{The reworded problem}

\begin{theorem}
\label{thm:main2}
Suppose we have a sphere $S$ with radius $r$ and a fully triangulated connected planar graph $G$ drawn on the surface of $S$ without crossings, which has $n$ vertices, all of its vertices have spherical distance less than $d_1$ on $S$, all open unit disks on $S$ contain at least one vertex and all of the points of all of its edges have distance less than $d_2$ from at least one of its respective endpoints. Then for $r\ge\frac{23d_1+0.5d_2}{\pi}$, $G$ does not have a nice $7$-colouring.
\end{theorem}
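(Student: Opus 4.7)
The plan is to combine a global degree count from Euler's formula with the extreme local rigidity of nice $7$-colorings on the triangular grid. In any nice $7$-coloring of $G$ the closed neighborhood of every vertex $v$ is a clique in $G^2$, so its colors are pairwise distinct and $\deg(v)\le 6$. Because $G$ triangulates $S$, Euler's formula yields $\sum_{v\in V(G)}(6-\deg(v))=12$, so (counted with multiplicity $6-\deg(v)$) there are exactly $12$ \emph{irregular} vertices; the remaining vertices are \emph{regular}.

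Next I would observe that on any connected subgraph of regular vertices the combinatorial structure is locally isomorphic to the infinite triangular grid $\Lambda$, and a nice $7$-coloring of $\Lambda$ is unique up to a color permutation and the standard shift coming from the $\mathbb{Z}/7$-labelling of $\Lambda$. Concretely, the color of any one vertex together with one of its neighbors forces the entire nice $7$-coloring on its regular patch; equivalently, the coloring lifts to a labelling of the patch by vectors in $\Lambda$, well defined modulo the color symmetry group, and any closed regular walk in $G$ carries a well-defined monodromy in $\mathbb{Z}/7$.

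I would then cluster the $12$ irregular vertices (with multiplicities) by spherical distance on $S$ and split into two cases. \textbf{Case 1:} either all irregular vertices sit in one small spherical disk, or they split into two clusters of total multiplicity exactly $6$ separated by a large distance. Here I would construct two short cycles $c_2,c_3$ through regular vertices, one encircling each (sub)cluster, with $|c_2|,|c_3|$ absolutely bounded (aiming for $\le 23$). The annulus between them consists entirely of regular vertices, so it maps locally into $\Lambda$. On the sphere $c_2$ and $c_3$ bound disks containing nearly antipodal points once $r$ is large; using the rigidity of the map to $\Lambda$ I would locate a graph cycle inside the annulus passing through two nearly antipodal points of $S$ whose graph length is at most $\max(|c_2|,|c_3|)$. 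Applying Lemma~\ref{lem:distance} turns this into a spherical closed curve of length $<2d_1\max(|c_2|,|c_3|)+d_2$ through antipodes, contradicting $r\ge(23d_1+0.5d_2)/\pi$.

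\textbf{Case 2:} there is a splitting of the irregular vertices into two well-separated groups whose total multiplicities are not divisible by $6$. Here I would enclose one group by a short regular cycle $c$ and study the nice $7$-coloring on a regular annulus surrounding $c$ via its pullback to $\Lambda$: the monodromy around $c$ must be trivial because the coloring closes up, but it is rigidly determined modulo $6$ by the total deficit $\sum(6-\deg(v))$ enclosed by $c$, yielding a contradiction with the case hypothesis. The \emph{main obstacle} I anticipate is the constructive step in Case 1: producing the cycles $c_2,c_3$ of absolutely bounded length that live entirely in the regular subgraph, tightly hug the irregular clusters, and then calibrating the antipodal-path argument so that the threshold $r\ge(23d_1+0.5d_2)/\pi$ really is attained. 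The parity/monodromy argument in Case 2 is cleaner but requires a precise setup of the lift to $\Lambda$ on a non-simply-connected region, and connecting this back to the original tiling in Theorem~\ref{thm:main} is then immediate from Lemmas~\ref{lem:nice}, \ref{lem:unitdisk}, \ref{lem:distance} and~\ref{lem:fullytriangulated} with $d_1=2$, $d_2=1$.
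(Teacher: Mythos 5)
Your outline follows the paper's proof closely: the Euler count giving $12$ irregular vertices counted with multiplicity, the Isbell rigidity on regular patches (Lemmas~\ref{lem:Isbell1}--\ref{lem:Isbell2}), the same two-case split by how the irregular vertices cluster, and the same Case~2 contradiction via a ``monodromy'' (the paper's $curv_G$), which Lemma~\ref{lem:curvature} ties to the enclosed deficit and Lemma~\ref{lem:case3c} shows must vanish modulo $6$ — note this monodromy lives in $\mathbb{Z}/6$ (rotations of the hexagonal neighbourhood), not $\mathbb{Z}/7$, as you yourself implicitly acknowledge when you write ``determined modulo $6$.'' The obstacle you correctly flag — bounded-length separating cycles in Case~1 — is indeed where the paper works hardest: it covers the irregular vertices with one or two trees of at most $22$ edges, walks around them to obtain boundary cycles of graph length at most $44$, and interpolates a family of nearby short cycles between them (Lemma~\ref{lem:c2c3}) whose signed distance to the antipode set must change sign, pinning down a cycle of length $\le 44$ near antipodal points and contradicting $r\ge(23d_1+0.5d_2)/\pi$.
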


\begin{proof}
First, suppose that $G$ does have a nice $7$-colouring $\sigma$. This obviously means that all of its degrees are at most $6$ as any vertex with degree at least $7$ would have at least two neighbours of the same colour by the pigeonhole principle.

\newpage

\noindent{\bf Notations and preliminary remarks}

\vskip6pt

We will continue to use the notations $V(X)$, $E(X)$ and $\vtx_i(X)$ defined in the previous section for all graphs $X$. Also, for any edge $\left(\vtx_i(G),\vtx_j(G)\right)$ of $G$, call $e\setminus\left(\vtx_i(G)\cup \vtx_j(G)\right)$ an open edge of $G$, denote the set of open edges of $G$ by $E_o(G)$. In the above definitions, we used the fixed embedding of $G$. In what follows, we will work with both the abstract graph and its embedding simultaneously.

Call the vertices of $G$ with degree $6$ {\it regular vertices} and the vertices of $G$ with degree less than $6$ {\it irregular vertices} and let their {\it multiplicity} be the difference of $6$ and their degree. Note that since $G$ is a fully triangulated planar graph, $\left\lvert E(G)\right\rvert=3\cdot\left\lvert V(G)\right\rvert-6$, thus the number of irregular vertices counted by multiplicity is exactly $12$. Also, let the multiset of irregular vertices be $I=\left\lbrace i_1,...,i_{12}\right\rbrace$.

For any two subsets $a$ and $b$ of $S$, let their {\it spherical distance} be their spherical distance on $S$ (denoted by $dist_S(a,b)$).

For any two subgraphs $G_a$ and $G_b$ of $G$, let their {\it graph distance} mean the smallest graph distance in $G$ occuring between their vertices (denoted by $dist_G(a,b)$).

For any $S'\subseteq S$, let $-\left(S'\right)$ be the antipode set of $S'$ within $S$.

For any cycle $c$ in $G$, let its sides be the connected components of $S\setminus c$. It is trivial that all cycles have two sides.

For any directed cycle $c$, define the side to its left as its {\it inside or} its {\it interior} (denoted by $int(c)$) and the side to its right as its {\it outside} or its {\it exterior} (denoted by $ext(c)$).
Also, denote the subgraphs of $G$ which only includes the edges with interiors within these regions (along with their endpoints) by $G_{int(c)}$, $G_{ext(c)}$, respectively.

For any $3$-cycle $c$ in $G$ for which $int(c)$ does not contain any vertices or open edges, call $int(c)\cup c$ a {\it triangle}, call the set of triangles of $G$ $\Delta(G)$.

For any $3$-cycle $c$ in $G$ for which $int(c)$ does not contain any vertices or open edges, call $int(c)$ an {\it open triangle}, call the set of open triangles of $G$ $\Delta_o(G)$.

Since $G$ is a fully triangulated graph, $S$ is the disjoint union of the elements of $V(G)$, $E_o(G)$ and $\Delta_o(G)$.

Let $A\left(\Delta_o(G)\right)$ be the adjacency graph of the open triangles of $G$ defined as the graph which has the elements of $\Delta_o(G)$ as its vertices and those are connected with an edge, which have a common bordering edge.

For a directed cycle $c$ in $G$, let the {\emph local curvature} of $c$ in $\vtx_i(c)$ (denoted by $lc\left(\vtx_i(c),c\right)$) be defined as $deg_{G_{ext(c)}}\left(\vtx_i(c)\right)-2$ and the {\emph curvature} of $c$ within $G$ ($\curv_G(c)$) be $\sum\limits_{i=1}^{\lvert V(c)\rvert}{lc\left(\vtx_i(c),c\right)}$.

For a subset $S'\in S$ and a cycle on $S$, let their \emph{signed spherical distance} be $$dist'_S(S',c)=dist_S\left(S',int(c)\right)-dist_S\left(S',ext(c)\right).$$ Obviously at least one of the two halves of the formula is $0$, depending on which side of $c$ is $S'$ located, and thus, $\left\lvert dist'_S(S',c)\right\rvert=dist_S(S',c)$.

\begin{lemma}\label{lem:2pointscycle}
For any two points $p_1,p_2\in S$ and any cycle $c\subset S$, \\$\left\lvert dist'_S\left(p_1,c\right)-dist'_S\left(p_2,c\right)\right\rvert\le dist_S\left(p_1,p_2\right)$.
\end{lemma}

\begin{proof}
If both $dist'_S\left(p_1,c\right)-dist'_S\left(p_2,c\right)$ are non-negative or both are non-positive, then the statement is obvious as $dist'_S\left(p_i,c\right)=dist_S\left(p_i,c\right)$ for $i=1,2$. If one is positive, while the other one is negative, then the shortest spherical segment between $p_1$ and $p_2$ intersects $c$ in at least one point $p_3$, for which the following formula is true: $$\left\lvert dist'_S\left(p_1,c\right)-dist'_S\left(p_2,c\right)\right\rvert=dist_S\left(p_1,c\right)+dist_S\left(p_2,c\right)$$
$$\le\left(p_1,p_3\right)+dist_S\left(p_2,p_3\right)=dist_S\left(p_1,p_2\right).$$
\end{proof}

\begin{lemma}\label{lem:point2cycles}
For any point $p_0\in S$ and cycles $c_1,c_2\subset S$ with $int\left(c_1\right)\subseteq int\left(c_2\right)$, $\left\lvert dist'_S\left(p_0,c_1\right)-dist'_S\left(p_0,c_2\right)\right\rvert\le\max\limits_{p_1\in c_1}{\min\limits_{p_2\in c_2}{dist_S\left(p_1,p_2\right)}}$.
\end{lemma}

\begin{proof}
In case $dist'_S\left(p_0,c_1\right)$ and $dist'_S\left(p_0,c_2\right)$ have the same sign and we assume without loss of generality that $c_1$ is closer to $p_0$, then by choosing $p_1$ as the closest point of $c_1$ to $p_0$, the statement follows from the triangle inequality as above. If they have an opposite sign, then it also follows from the triangle inequality, again from choosing $p_1$ as the closest point of $c_1$ to $p_0$.
\end{proof}

\begin{lemma}
\label{lem:ADeltaoGisconnected}
$A\left(\Delta_o(G)\right)$ is connected.
\end{lemma}

\begin{proof}
For any $e\in E_o(G)$, the two open triangles bordering $e$ are connected with an edge in $A\left(\Delta_o(G)\right)$, thus, they are in the same connected component of $A\left(\Delta_o(G)\right)$. This also means that for any $v\in V(G)$, the open triangles bordering $v$ are also in the same connected component of $A\left(\Delta_o(G)\right)$ as we can get from any open triangle bordering $v$ to any one by crossing the open edges of $G$ starting from $v$ one by one, and as we have seen it above, these are in the same component.

Also, for any two neighbouring vertices of $G$, there exists an open triangle bordering both of them. And since $G$ is connected, the subgraphs defined in the above paragraph are in the same connected component of $A\left(\Delta_o(G)\right)$. And since all open triangles have at least one bordering vertex in $G$ (in fact, exactly $3$), this proves the statement.
\end{proof}

Let the \emph{graph length} of a path or closed path $p$ in $G'$ be the number of its edges. We denote it by $l(p)$.

Let the \emph{spherical broken line} of a path or closed path $p$ in $G'$ be the curve defined by connecting neighbouring vertices of $p$ with spherical segments instead of the edges connecting them.

Let the \emph{broken line length} of a path or a closed path $p$ in $G'$ be the length of the spherical broken line belonging to $p$. We denote it by $L(p)$.

Let the \emph{$i$-neighbourhood} of a vertex $v$ of $G'$ be the subgraph of $G'$ induced by those vertices, which have graph distance at most $i$ from $v$ (denoted by $N_i(v)$).

Let the \emph{strict $i$-neighbourhood} of a vertex $v$ of $G'$ be the subgraph of $G'$ induced by those vertices, which have graph distance exactly $i$ from $v$ (denoted by $n_i(v)$).

For a vertex $v$ and a set $S$ of vertices, let $e(v,S)$ mean the number of edges starting from $v$ and ending in any of the vertices of $S$.

\begin{center}
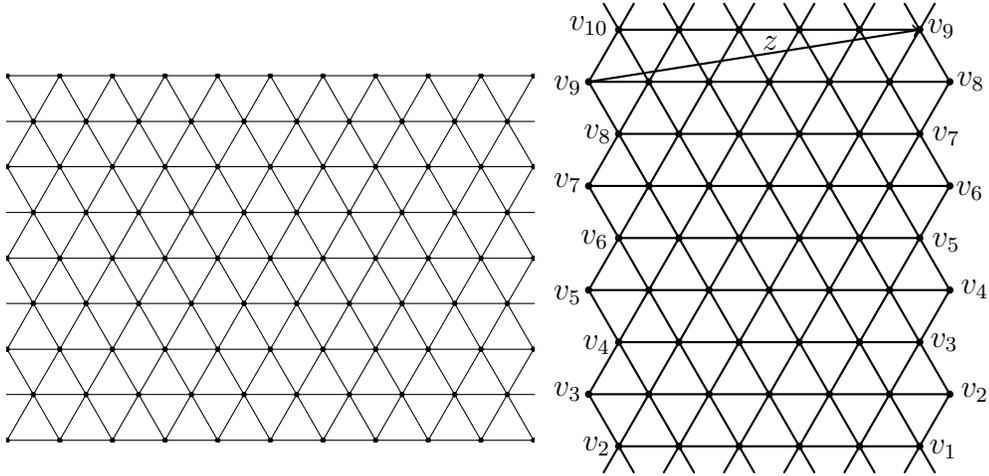

\begin{minipage}{.59\textwidth}
	\centering
	\input{Figures/haromszogracs}
\end{minipage}
\begin{minipage}{.39\textwidth}
	\centering
	\input{Figures/GTz}
\end{minipage}
	\captionof{figure}{$G_T$ on the left and $G_T(z)$ for a specific $z$ on the right}
	\label{GT}
\end{center}

Define an infinite regular triangular grid $G(T)$ drawn in the Euclidean plane as shown in Figure \ref{GT} left. Define open edges, triangles and open triangles in $G(T)$ analogously as in $G$. Also define $G_T(z)$ for any vector $z$ as the graph we obtain from $G_T$ by merging all of its vertices, whose vector is of the form $kz$ for some integer $k$ (see Figure \ref{GT} right).

Furthermore, define the following subgraphs of $G(T)$: $G(h)$ (Figure \ref{fig:triangulargrid4}), $G(h)^+$ (Figure \ref{fig:triangulargrid2}), $G(H)^-$ (Figure \ref{fig:triangulargrid3}) and $G(H)$ (Figure \ref{triangulargrid1b}).

Analogously to the colouring of the plane by Isbell, call a colouring of the vertices of the infinite triangular grid $T$ an {\it Isbell colouring} if it is constructed in the following way:

We take a vertex in the grid and colour it and its neighbours with $7$ different colours. We then tile the grid with the disjoint translates of this coloured hexagon.

Such a colouring is trivially nice and periodical, thus any Isbell colouring of $T$ can be generated using any of the vertices of $T$ as the starting vertex. Also, for all colourings of the starting hexagon, there are exactly two ways to colour $T$ depending on how we place the hexagons compared to each other. Also, all Isbell colourings can be generated with the above procedure starting from any hexagon formed by a vertex and its $6$ neighbours.

\begin{lemma}
\label{lem:Isbell1}
The $G_H$ (Figure \ref{triangulargrid1b}) can only be nicely $7$-coloured by a part of an Isbell colouring.
\end{lemma}

\begin{center}
\begin{minipage}{.32\textwidth}
	\centering
	\usetikzlibrary{arrows}
\begin{tikzpicture}[line cap=round,line join=round,scale=0.9]
\clip(-2.25,-2) rectangle (2.35,2.12);
\draw (-1,-1.73)-- (0,-1.73);
\draw (0,-1.73)-- (1,-1.73);
\draw (1,-1.73)-- (1.5,-0.87);
\draw (1.5,-0.87)-- (2,0);
\draw (2,0)-- (1.5,0.87);
\draw (1.5,0.87)-- (1,1.73);
\draw (1,1.73)-- (0,1.73);
\draw (0,1.73)-- (-1,1.73);
\draw (-1,1.73)-- (-1.5,0.87);
\draw (-1.5,0.87)-- (-2,0);
\draw (-2,0)-- (-1.5,-0.87);
\draw (-1.5,-0.87)-- (-1,-1.73);
\draw (1,-1.73)-- (0.5,-0.87);
\draw (0.5,-0.87)-- (1.5,-0.87);
\draw (1.5,-0.87)-- (1,0);
\draw (1,0)-- (2,0);
\draw (1,0)-- (1.5,0.87);
\draw (1.5,0.87)-- (0.5,0.87);
\draw (0.5,0.87)-- (1,1.73);
\draw (0.5,0.87)-- (0,1.73);
\draw (0,1.73)-- (-0.5,0.87);
\draw (-0.5,0.87)-- (-1,1.73);
\draw (-1.5,0.87)-- (-0.5,0.87);
\draw (-1,0)-- (-0.5,0.87);
\draw (-2,0)-- (-1,0);
\draw (-1,0)-- (-1.5,0.87);
\draw (-1.5,-0.87)-- (-1,0);
\draw (-1,0)-- (-0.5,-0.87);
\draw (-0.5,-0.87)-- (-1.5,-0.87);
\draw (-1,-1.73)-- (-0.5,-0.87);
\draw (-0.5,-0.87)-- (0,-1.73);
\draw (0,-1.73)-- (0.5,-0.87);
\draw (0.5,-0.87)-- (-0.5,-0.87);
\draw (0.5,-0.87)-- (1,0);
\draw (0.5,-0.87)-- (0,0);
\draw (1,0)-- (0,0);
\draw (-0.5,-0.87)-- (0,0);
\draw (-1,0)-- (0,0);
\draw (0,0)-- (-0.5,0.87);
\draw (-0.5,0.87)-- (0.5,0.87);
\draw (0,0)-- (0.5,0.87);
\draw (1,0)-- (0.5,0.87);
\begin{scriptsize}
\fill [color=black] (0,0) circle (2pt);
\fill [color=black] (1,0) circle (2pt);
\fill [color=black] (0.5,0.87) circle (2pt);
\fill [color=black] (-0.5,0.87) circle (2pt);
\fill [color=black] (-1,0) circle (2pt);
\fill [color=black] (-0.5,-0.87) circle (2pt);
\fill [color=black] (0.5,-0.87) circle (2pt);
\fill [color=black] (1.5,0.87) circle (2pt);
\fill [color=black] (1,1.73) circle (2pt);
\fill [color=black] (0,1.73) circle (2pt);
\fill [color=black] (-1,1.73) circle (2pt);
\fill [color=black] (-1.5,0.87) circle (2pt);
\fill [color=black] (-2,0) circle (2pt);
\fill [color=black] (-1.5,-0.87) circle (2pt);
\fill [color=black] (-1,-1.73) circle (2pt);
\fill [color=black] (0,-1.73) circle (2pt);
\fill [color=black] (1,-1.73) circle (2pt);
\fill [color=black] (1.5,-0.87) circle (2pt);
\fill [color=black] (2,0) circle (2pt);
\end{scriptsize}
\end{tikzpicture}
	\captionof{figure}{}
	\label{triangulargrid1b}
\end{minipage}
\begin{minipage}{.32\textwidth}
	\centering
	\usetikzlibrary{arrows}
\begin{tikzpicture}[line cap=round,line join=round,scale=0.9]
\clip(-2.25,-2) rectangle (2.35,2.12);
\draw (-1,-1.73)-- (0,-1.73);
\draw (0,-1.73)-- (1,-1.73);
\draw (1,-1.73)-- (1.5,-0.87);
\draw (1.5,-0.87)-- (2,0);
\draw (2,0)-- (1.5,0.87);
\draw (1.5,0.87)-- (1,1.73);
\draw (1,1.73)-- (0,1.73);
\draw (0,1.73)-- (-1,1.73);
\draw (-1,1.73)-- (-1.5,0.87);
\draw (-1.5,0.87)-- (-2,0);
\draw (-2,0)-- (-1.5,-0.87);
\draw (-1.5,-0.87)-- (-1,-1.73);
\draw (1,-1.73)-- (0.5,-0.87);
\draw (0.5,-0.87)-- (1.5,-0.87);
\draw (1.5,-0.87)-- (1,0);
\draw (1,0)-- (2,0);
\draw (1,0)-- (1.5,0.87);
\draw (1.5,0.87)-- (0.5,0.87);
\draw (0.5,0.87)-- (1,1.73);
\draw (0.5,0.87)-- (0,1.73);
\draw (0,1.73)-- (-0.5,0.87);
\draw (-0.5,0.87)-- (-1,1.73);
\draw (-1.5,0.87)-- (-0.5,0.87);
\draw (-1,0)-- (-0.5,0.87);
\draw (-2,0)-- (-1,0);
\draw (-1,0)-- (-1.5,0.87);
\draw (-1.5,-0.87)-- (-1,0);
\draw (-1,0)-- (-0.5,-0.87);
\draw (-0.5,-0.87)-- (-1.5,-0.87);
\draw (-1,-1.73)-- (-0.5,-0.87);
\draw (-0.5,-0.87)-- (0,-1.73);
\draw (0,-1.73)-- (0.5,-0.87);
\draw (0.5,-0.87)-- (-0.5,-0.87);
\draw (0.5,-0.87)-- (1,0);
\draw (0.5,-0.87)-- (0,0);
\draw (1,0)-- (0,0);
\draw (-0.5,-0.87)-- (0,0);
\draw (-1,0)-- (0,0);
\draw (0,0)-- (-0.5,0.87);
\draw (-0.5,0.87)-- (0.5,0.87);
\draw (0,0)-- (0.5,0.87);
\draw (1,0)-- (0.5,0.87);
\begin{scriptsize}
\fill [color=black] (0,0) circle (2pt);
\draw[color=black] (0.3,0.2) node {\normalsize{1}};
\fill [color=black] (1,0) circle (2pt);
\draw[color=black] (1.3,0.2) node {\normalsize{4}};
\fill [color=black] (0.5,0.87) circle (2pt);
\draw[color=black] (0.8,1.06) node {\normalsize{5}};
\fill [color=black] (-0.5,0.87) circle (2pt);
\draw[color=black] (-0.2,1.06) node {\normalsize{6}};
\fill [color=black] (-1,0) circle (2pt);
\draw[color=black] (-0.7,0.2) node {\normalsize{7}};
\fill [color=black] (-0.5,-0.87) circle (2pt);
\draw[color=black] (-0.2,-0.66) node {\normalsize{2}};
\fill [color=black] (0.5,-0.87) circle (2pt);
\draw[color=black] (0.8,-0.66) node {\normalsize{3}};
\fill [color=black] (1.5,0.87) circle (2pt);
\fill [color=black] (1,1.73) circle (2pt);
\fill [color=black] (0,1.73) circle (2pt);
\fill [color=black] (-1,1.73) circle (2pt);
\fill [color=black] (-1.5,0.87) circle (2pt);
\fill [color=black] (-2,0) circle (2pt);
\fill [color=black] (-1.5,-0.87) circle (2pt);
\fill [color=black] (-1,-1.73) circle (2pt);
\fill [color=black] (0,-1.73) circle (2pt);
\fill [color=black] (1,-1.73) circle (2pt);
\fill [color=black] (1.5,-0.87) circle (2pt);
\fill [color=black] (2,0) circle (2pt);
\end{scriptsize}
\end{tikzpicture}
	\captionof{figure}{}
	\label{triangulargrid1feliratos}
\end{minipage}
\begin{minipage}{.32\textwidth}
	\centering
	\usetikzlibrary{arrows}
\begin{tikzpicture}[line cap=round,line join=round,scale=0.9]
\clip(-2.25,-2) rectangle (2.35,2.12);
\draw (-1,-1.73)-- (0,-1.73);
\draw (0,-1.73)-- (1,-1.73);
\draw (1,-1.73)-- (1.5,-0.87);
\draw (1.5,-0.87)-- (2,0);
\draw (2,0)-- (1.5,0.87);
\draw (1.5,0.87)-- (1,1.73);
\draw (1,1.73)-- (0,1.73);
\draw (0,1.73)-- (-1,1.73);
\draw (-1,1.73)-- (-1.5,0.87);
\draw (-1.5,0.87)-- (-2,0);
\draw (-2,0)-- (-1.5,-0.87);
\draw (-1.5,-0.87)-- (-1,-1.73);
\draw (1,-1.73)-- (0.5,-0.87);
\draw (0.5,-0.87)-- (1.5,-0.87);
\draw (1.5,-0.87)-- (1,0);
\draw (1,0)-- (2,0);
\draw (1,0)-- (1.5,0.87);
\draw (1.5,0.87)-- (0.5,0.87);
\draw (0.5,0.87)-- (1,1.73);
\draw (0.5,0.87)-- (0,1.73);
\draw (0,1.73)-- (-0.5,0.87);
\draw (-0.5,0.87)-- (-1,1.73);
\draw (-1.5,0.87)-- (-0.5,0.87);
\draw (-1,0)-- (-0.5,0.87);
\draw (-2,0)-- (-1,0);
\draw (-1,0)-- (-1.5,0.87);
\draw (-1.5,-0.87)-- (-1,0);
\draw (-1,0)-- (-0.5,-0.87);
\draw (-0.5,-0.87)-- (-1.5,-0.87);
\draw (-1,-1.73)-- (-0.5,-0.87);
\draw (-0.5,-0.87)-- (0,-1.73);
\draw (0,-1.73)-- (0.5,-0.87);
\draw (0.5,-0.87)-- (-0.5,-0.87);
\draw (0.5,-0.87)-- (1,0);
\draw (0.5,-0.87)-- (0,0);
\draw (1,0)-- (0,0);
\draw (-0.5,-0.87)-- (0,0);
\draw (-1,0)-- (0,0);
\draw (0,0)-- (-0.5,0.87);
\draw (-0.5,0.87)-- (0.5,0.87);
\draw (0,0)-- (0.5,0.87);
\draw (1,0)-- (0.5,0.87);
\begin{scriptsize}
\fill [color=black] (0,0) circle (2pt);
\draw[color=black] (0.3,0.2) node {\normalsize{1}};
\fill [color=black] (1,0) circle (2pt);
\draw[color=black] (1.3,0.2) node {\normalsize{4}};
\fill [color=black] (0.5,0.87) circle (2pt);
\draw[color=black] (0.8,1.06) node {\normalsize{5}};
\fill [color=black] (-0.5,0.87) circle (2pt);
\draw[color=black] (-0.2,1.06) node {\normalsize{6}};
\fill [color=black] (-1,0) circle (2pt);
\draw[color=black] (-0.7,0.2) node {\normalsize{7}};
\fill [color=black] (-0.5,-0.87) circle (2pt);
\draw[color=black] (-0.2,-0.66) node {\normalsize{2}};
\fill [color=black] (0.5,-0.87) circle (2pt);
\draw[color=black] (0.8,-0.66) node {\normalsize{3}};
\fill [color=black] (1.5,0.87) circle (2pt);
\fill [color=black] (1,1.73) circle (2pt);
\fill [color=black] (0,1.73) circle (2pt);
\draw[color=black] (0.3,1.94) node {\normalsize{2}};
\fill [color=black] (-1,1.73) circle (2pt);
\fill [color=black] (-1.5,0.87) circle (2pt);
\fill [color=black] (-2,0) circle (2pt);
\fill [color=black] (-1.5,-0.87) circle (2pt);
\fill [color=black] (-1,-1.73) circle (2pt);
\fill [color=black] (0,-1.73) circle (2pt);
\fill [color=black] (1,-1.73) circle (2pt);
\fill [color=black] (1.5,-0.87) circle (2pt);
\fill [color=black] (2,0) circle (2pt);
\draw[color=black] (2.2,0.2) node {\normalsize{2}};
\end{scriptsize}
\end{tikzpicture}
	\captionof{figure}{}
	\label{triangulargrid1feliratos1b}
\end{minipage}%
\end{center}
\begin{center}
\begin{minipage}{.32\textwidth}
	\centering
	\usetikzlibrary{arrows}
\begin{tikzpicture}[line cap=round,line join=round,scale=0.9]
\clip(-2.25,-2) rectangle (2.35,2.12);
\draw (-1,-1.73)-- (0,-1.73);
\draw (0,-1.73)-- (1,-1.73);
\draw (1,-1.73)-- (1.5,-0.87);
\draw (1.5,-0.87)-- (2,0);
\draw (2,0)-- (1.5,0.87);
\draw (1.5,0.87)-- (1,1.73);
\draw (1,1.73)-- (0,1.73);
\draw (0,1.73)-- (-1,1.73);
\draw (-1,1.73)-- (-1.5,0.87);
\draw (-1.5,0.87)-- (-2,0);
\draw (-2,0)-- (-1.5,-0.87);
\draw (-1.5,-0.87)-- (-1,-1.73);
\draw (1,-1.73)-- (0.5,-0.87);
\draw (0.5,-0.87)-- (1.5,-0.87);
\draw (1.5,-0.87)-- (1,0);
\draw (1,0)-- (2,0);
\draw (1,0)-- (1.5,0.87);
\draw (1.5,0.87)-- (0.5,0.87);
\draw (0.5,0.87)-- (1,1.73);
\draw (0.5,0.87)-- (0,1.73);
\draw (0,1.73)-- (-0.5,0.87);
\draw (-0.5,0.87)-- (-1,1.73);
\draw (-1.5,0.87)-- (-0.5,0.87);
\draw (-1,0)-- (-0.5,0.87);
\draw (-2,0)-- (-1,0);
\draw (-1,0)-- (-1.5,0.87);
\draw (-1.5,-0.87)-- (-1,0);
\draw (-1,0)-- (-0.5,-0.87);
\draw (-0.5,-0.87)-- (-1.5,-0.87);
\draw (-1,-1.73)-- (-0.5,-0.87);
\draw (-0.5,-0.87)-- (0,-1.73);
\draw (0,-1.73)-- (0.5,-0.87);
\draw (0.5,-0.87)-- (-0.5,-0.87);
\draw (0.5,-0.87)-- (1,0);
\draw (0.5,-0.87)-- (0,0);
\draw (1,0)-- (0,0);
\draw (-0.5,-0.87)-- (0,0);
\draw (-1,0)-- (0,0);
\draw (0,0)-- (-0.5,0.87);
\draw (-0.5,0.87)-- (0.5,0.87);
\draw (0,0)-- (0.5,0.87);
\draw (1,0)-- (0.5,0.87);
\begin{scriptsize}
\fill [color=black] (0,0) circle (2pt);
\draw[color=black] (0.3,0.2) node {\normalsize{1}};
\fill [color=black] (1,0) circle (2pt);
\draw[color=black] (1.3,0.2) node {\normalsize{4}};
\fill [color=black] (0.5,0.87) circle (2pt);
\draw[color=black] (0.8,1.06) node {\normalsize{5}};
\fill [color=black] (-0.5,0.87) circle (2pt);
\draw[color=black] (-0.2,1.06) node {\normalsize{6}};
\fill [color=black] (-1,0) circle (2pt);
\draw[color=black] (-0.7,0.2) node {\normalsize{7}};
\fill [color=black] (-0.5,-0.87) circle (2pt);
\draw[color=black] (-0.2,-0.66) node {\normalsize{2}};
\fill [color=black] (0.5,-0.87) circle (2pt);
\draw[color=black] (0.8,-0.66) node {\normalsize{3}};
\fill [color=black] (1.5,0.87) circle (2pt);
\draw[color=black] (1.7,1.06) node {\normalsize{2}};
\fill [color=black] (1,1.73) circle (2pt);
\fill [color=black] (0,1.73) circle (2pt);
\fill [color=black] (-1,1.73) circle (2pt);
\draw[color=black] (-0.7,1.94) node {\normalsize{2}};
\fill [color=black] (-1.5,0.87) circle (2pt);
\fill [color=black] (-2,0) circle (2pt);
\fill [color=black] (-1.5,-0.87) circle (2pt);
\fill [color=black] (-1,-1.73) circle (2pt);
\fill [color=black] (0,-1.73) circle (2pt);
\fill [color=black] (1,-1.73) circle (2pt);
\fill [color=black] (1.5,-0.87) circle (2pt);
\fill [color=black] (2,0) circle (2pt);
\end{scriptsize}
\end{tikzpicture}
	\captionof{figure}{}
	\label{triangulargrid1feliratos2b}
\end{minipage}
\begin{minipage}{.32\textwidth}
	\centering
	\usetikzlibrary{arrows}
\begin{tikzpicture}[line cap=round,line join=round,scale=0.9]
\clip(-2.25,-2) rectangle (2.35,2.12);
\draw (-1,-1.73)-- (0,-1.73);
\draw (0,-1.73)-- (1,-1.73);
\draw (1,-1.73)-- (1.5,-0.87);
\draw (1.5,-0.87)-- (2,0);
\draw (2,0)-- (1.5,0.87);
\draw (1.5,0.87)-- (1,1.73);
\draw (1,1.73)-- (0,1.73);
\draw (0,1.73)-- (-1,1.73);
\draw (-1,1.73)-- (-1.5,0.87);
\draw (-1.5,0.87)-- (-2,0);
\draw (-2,0)-- (-1.5,-0.87);
\draw (-1.5,-0.87)-- (-1,-1.73);
\draw (1,-1.73)-- (0.5,-0.87);
\draw (0.5,-0.87)-- (1.5,-0.87);
\draw (1.5,-0.87)-- (1,0);
\draw (1,0)-- (2,0);
\draw (1,0)-- (1.5,0.87);
\draw (1.5,0.87)-- (0.5,0.87);
\draw (0.5,0.87)-- (1,1.73);
\draw (0.5,0.87)-- (0,1.73);
\draw (0,1.73)-- (-0.5,0.87);
\draw (-0.5,0.87)-- (-1,1.73);
\draw (-1.5,0.87)-- (-0.5,0.87);
\draw (-1,0)-- (-0.5,0.87);
\draw (-2,0)-- (-1,0);
\draw (-1,0)-- (-1.5,0.87);
\draw (-1.5,-0.87)-- (-1,0);
\draw (-1,0)-- (-0.5,-0.87);
\draw (-0.5,-0.87)-- (-1.5,-0.87);
\draw (-1,-1.73)-- (-0.5,-0.87);
\draw (-0.5,-0.87)-- (0,-1.73);
\draw (0,-1.73)-- (0.5,-0.87);
\draw (0.5,-0.87)-- (-0.5,-0.87);
\draw (0.5,-0.87)-- (1,0);
\draw (0.5,-0.87)-- (0,0);
\draw (1,0)-- (0,0);
\draw (-0.5,-0.87)-- (0,0);
\draw (-1,0)-- (0,0);
\draw (0,0)-- (-0.5,0.87);
\draw (-0.5,0.87)-- (0.5,0.87);
\draw (0,0)-- (0.5,0.87);
\draw (1,0)-- (0.5,0.87);
\begin{scriptsize}
\fill [color=black] (0,0) circle (2pt);
\draw[color=black] (0.3,0.2) node {\normalsize{1}};
\fill [color=black] (1,0) circle (2pt);
\draw[color=black] (1.3,0.2) node {\normalsize{4}};
\fill [color=black] (0.5,0.87) circle (2pt);
\draw[color=black] (0.8,1.06) node {\normalsize{5}};
\fill [color=black] (-0.5,0.87) circle (2pt);
\draw[color=black] (-0.2,1.06) node {\normalsize{6}};
\fill [color=black] (-1,0) circle (2pt);
\draw[color=black] (-0.7,0.2) node {\normalsize{7}};
\fill [color=black] (-0.5,-0.87) circle (2pt);
\draw[color=black] (-0.2,-0.66) node {\normalsize{2}};
\fill [color=black] (0.5,-0.87) circle (2pt);
\draw[color=black] (0.8,-0.66) node {\normalsize{3}};
\fill [color=black] (1.5,0.87) circle (2pt);
\draw[color=black] (1.7,1.06) node {\normalsize{7}};
\fill [color=black] (1,1.73) circle (2pt);
\draw[color=black] (1.2,1.94) node {\normalsize{3}};
\fill [color=black] (0,1.73) circle (2pt);
\draw[color=black] (0.3,1.94) node {\normalsize{2}};
\fill [color=black] (-1,1.73) circle (2pt);
\draw[color=black] (-0.7,1.94) node {\normalsize{4}};
\fill [color=black] (-1.5,0.87) circle (2pt);
\draw[color=black] (-1.2,1.06) node {\normalsize{3}};
\fill [color=black] (-2,0) circle (2pt);
\draw[color=black] (-1.7,0.2) node {\normalsize{5}};
\fill [color=black] (-1.5,-0.87) circle (2pt);
\draw[color=black] (-1.2,-0.66) node {\normalsize{4}};
\fill [color=black] (-1,-1.73) circle (2pt);
\draw[color=black] (-0.7,-1.54) node {\normalsize{6}};
\fill [color=black] (0,-1.73) circle (2pt);
\draw[color=black] (0.3,-1.54) node {\normalsize{5}};
\fill [color=black] (1,-1.73) circle (2pt);
\draw[color=black] (1.3,-1.54) node {\normalsize{7}};
\fill [color=black] (1.5,-0.87) circle (2pt);
\draw[color=black] (1.8,-0.66) node {\normalsize{6}};
\fill [color=black] (2,0) circle (2pt);
\draw[color=black] (2.2,0.2) node {\normalsize{2}};
\end{scriptsize}
\end{tikzpicture}
	\captionof{figure}{}
	\label{triangulargrid1feliratos1}
\end{minipage}
\begin{minipage}{.32\textwidth}
	\centering
	\usetikzlibrary{arrows}
\begin{tikzpicture}[line cap=round,line join=round,scale=0.9]
\clip(-2.25,-2) rectangle (2.35,2.12);
\draw (-1,-1.73)-- (0,-1.73);
\draw (0,-1.73)-- (1,-1.73);
\draw (1,-1.73)-- (1.5,-0.87);
\draw (1.5,-0.87)-- (2,0);
\draw (2,0)-- (1.5,0.87);
\draw (1.5,0.87)-- (1,1.73);
\draw (1,1.73)-- (0,1.73);
\draw (0,1.73)-- (-1,1.73);
\draw (-1,1.73)-- (-1.5,0.87);
\draw (-1.5,0.87)-- (-2,0);
\draw (-2,0)-- (-1.5,-0.87);
\draw (-1.5,-0.87)-- (-1,-1.73);
\draw (1,-1.73)-- (0.5,-0.87);
\draw (0.5,-0.87)-- (1.5,-0.87);
\draw (1.5,-0.87)-- (1,0);
\draw (1,0)-- (2,0);
\draw (1,0)-- (1.5,0.87);
\draw (1.5,0.87)-- (0.5,0.87);
\draw (0.5,0.87)-- (1,1.73);
\draw (0.5,0.87)-- (0,1.73);
\draw (0,1.73)-- (-0.5,0.87);
\draw (-0.5,0.87)-- (-1,1.73);
\draw (-1.5,0.87)-- (-0.5,0.87);
\draw (-1,0)-- (-0.5,0.87);
\draw (-2,0)-- (-1,0);
\draw (-1,0)-- (-1.5,0.87);
\draw (-1.5,-0.87)-- (-1,0);
\draw (-1,0)-- (-0.5,-0.87);
\draw (-0.5,-0.87)-- (-1.5,-0.87);
\draw (-1,-1.73)-- (-0.5,-0.87);
\draw (-0.5,-0.87)-- (0,-1.73);
\draw (0,-1.73)-- (0.5,-0.87);
\draw (0.5,-0.87)-- (-0.5,-0.87);
\draw (0.5,-0.87)-- (1,0);
\draw (0.5,-0.87)-- (0,0);
\draw (1,0)-- (0,0);
\draw (-0.5,-0.87)-- (0,0);
\draw (-1,0)-- (0,0);
\draw (0,0)-- (-0.5,0.87);
\draw (-0.5,0.87)-- (0.5,0.87);
\draw (0,0)-- (0.5,0.87);
\draw (1,0)-- (0.5,0.87);
\begin{scriptsize}
\fill [color=black] (0,0) circle (2pt);
\draw[color=black] (0.3,0.2) node {\normalsize{1}};
\fill [color=black] (1,0) circle (2pt);
\draw[color=black] (1.3,0.2) node {\normalsize{4}};
\fill [color=black] (0.5,0.87) circle (2pt);
\draw[color=black] (0.8,1.06) node {\normalsize{5}};
\fill [color=black] (-0.5,0.87) circle (2pt);
\draw[color=black] (-0.2,1.06) node {\normalsize{6}};
\fill [color=black] (-1,0) circle (2pt);
\draw[color=black] (-0.7,0.2) node {\normalsize{7}};
\fill [color=black] (-0.5,-0.87) circle (2pt);
\draw[color=black] (-0.2,-0.66) node {\normalsize{2}};
\fill [color=black] (0.5,-0.87) circle (2pt);
\draw[color=black] (0.8,-0.66) node {\normalsize{3}};
\fill [color=black] (1.5,0.87) circle (2pt);
\draw[color=black] (1.7,1.06) node {\normalsize{2}};
\fill [color=black] (1,1.73) circle (2pt);
\draw[color=black] (1.2,1.94) node {\normalsize{7}};
\fill [color=black] (0,1.73) circle (2pt);
\draw[color=black] (0.3,1.94) node {\normalsize{3}};
\fill [color=black] (-1,1.73) circle (2pt);
\draw[color=black] (-0.7,1.94) node {\normalsize{2}};
\fill [color=black] (-1.5,0.87) circle (2pt);
\draw[color=black] (-1.2,1.06) node {\normalsize{4}};
\fill [color=black] (-2,0) circle (2pt);
\draw[color=black] (-1.7,0.2) node {\normalsize{3}};
\fill [color=black] (-1.5,-0.87) circle (2pt);
\draw[color=black] (-1.2,-0.66) node {\normalsize{5}};
\fill [color=black] (-1,-1.73) circle (2pt);
\draw[color=black] (-0.7,-1.54) node {\normalsize{4}};
\fill [color=black] (0,-1.73) circle (2pt);
\draw[color=black] (0.3,-1.54) node {\normalsize{6}};
\fill [color=black] (1,-1.73) circle (2pt);
\draw[color=black] (1.3,-1.54) node {\normalsize{5}};
\fill [color=black] (1.5,-0.87) circle (2pt);
\draw[color=black] (1.8,-0.66) node {\normalsize{7}};
\fill [color=black] (2,0) circle (2pt);
\draw[color=black] (2.2,0.2) node {\normalsize{6}};
\end{scriptsize}
\end{tikzpicture}
	\captionof{figure}{}
	\label{triangulargrid1feliratos2}
\end{minipage}%
\end{center}

\begin{proof}
We may assume without loss of generality that the central $G_h$ is coloured as in Figure \ref{triangulargrid1feliratos}. We now have to colour the remaining $12$ vertices so that all border vertices of the central $G_h$ get exactly one neighbour from all colours (except for its own colour). And since for all colours from $2$ to $7$, there are exactly $3$ coloured vertices that lack a neighbour with that colour and all of the uncoloured vertices border $1$ or $2$ of the coloured ones, we must use all six of these colours at least twice. But since there are $12$ uncoloured vertices in total, we must use all of them exactly twice. Now decide which two vertices get colour $2$. Since only the vertices coloured with $4$, $5$ and $6$ lack a neighbour with colour $2$, the only two possibilities to place these two vertices are those shown in Figure \ref{triangulargrid1feliratos1b} and Figure \ref{triangulargrid1feliratos2b}. Similarly, there are two possibilities for the placement of the vertices with colours $3$, $4$, $5$, $6$ and $7$, only with their placement being rotated. And (progressing in a counterclockwise order if we chose the first option for $2$ and in a clockwise order otherwise) we can prove for these five colours that whichever option was chosen for colour $2$, should be chosen for these colours too (otherwise there would be an overlap between neighbouring colours). So we get the colourings on Figure \ref{triangulargrid1feliratos1} and Figure \ref{triangulargrid1feliratos2}, which indeed are parts of Isbell colourings.
\end{proof}

\begin{center}
\begin{minipage}{.32\textwidth}
	\centering
	\usetikzlibrary{arrows}
\begin{tikzpicture}[line cap=round,line join=round]
\clip(-1.15,-1) rectangle (1.15,1);
\draw (-1,0)-- (-0.5,0.87);
\draw (-1,0)-- (-0.5,-0.87);
\draw (0.5,-0.87)-- (-0.5,-0.87);
\draw (0.5,-0.87)-- (1,0);
\draw (0.5,-0.87)-- (0,0);
\draw (1,0)-- (0,0);
\draw (-0.5,-0.87)-- (0,0);
\draw (-1,0)-- (0,0);
\draw (0,0)-- (-0.5,0.87);
\draw (-0.5,0.87)-- (0.5,0.87);
\draw (0,0)-- (0.5,0.87);
\draw (1,0)-- (0.5,0.87);
\begin{scriptsize}
\fill [color=black] (0,0) circle (1.5pt);
\fill [color=black] (1,0) circle (1.5pt);
\fill [color=black] (0.5,0.87) circle (1.5pt);
\fill [color=black] (-0.5,0.87) circle (1.5pt);
\fill [color=black] (-1,0) circle (1.5pt);
\fill [color=black] (-0.5,-0.87) circle (1.5pt);
\fill [color=black] (0.5,-0.87) circle (1.5pt);
\end{scriptsize}
\end{tikzpicture}
	\captionof{figure}{$G_h$}
	\label{fig:triangulargrid4}
\end{minipage}
\begin{minipage}{.32\textwidth}
	\centering
	\usetikzlibrary{arrows}
\begin{tikzpicture}[line cap=round,line join=round]
\clip(-1.15,-1) rectangle (2.15,1);
\draw (1,0)-- (2,0);
\draw (-1,0)-- (-0.5,0.87);
\draw (-1,0)-- (-0.5,-0.87);
\draw (0.5,-0.87)-- (-0.5,-0.87);
\draw (0.5,-0.87)-- (1,0);
\draw (0.5,-0.87)-- (0,0);
\draw (1,0)-- (0,0);
\draw (-0.5,-0.87)-- (0,0);
\draw (-1,0)-- (0,0);
\draw (0,0)-- (-0.5,0.87);
\draw (-0.5,0.87)-- (0.5,0.87);
\draw (0,0)-- (0.5,0.87);
\draw (1,0)-- (0.5,0.87);
\begin{scriptsize}
\fill [color=black] (0,0) circle (1.5pt);
\fill [color=black] (1,0) circle (1.5pt);
\fill [color=black] (0.5,0.87) circle (1.5pt);
\fill [color=black] (-0.5,0.87) circle (1.5pt);
\fill [color=black] (-1,0) circle (1.5pt);
\fill [color=black] (-0.5,-0.87) circle (1.5pt);
\fill [color=black] (0.5,-0.87) circle (1.5pt);
\fill [color=black] (2,0) circle (1.5pt);
\end{scriptsize}
\end{tikzpicture}
	\captionof{figure}{$G_h^+$}
	\label{fig:triangulargrid2}
\end{minipage}
\begin{minipage}{.32\textwidth}
	\centering
	\usetikzlibrary{arrows}
\begin{tikzpicture}[line cap=round,line join=round,scale=0.75]
\clip(-2.25,-2) rectangle (1.35,2.12);
\draw (-1,-1.73)-- (0,-1.73);
\draw (0,1.73)-- (-1,1.73);
\draw (-1,1.73)-- (-1.5,0.87);
\draw (-1.5,0.87)-- (-2,0);
\draw (-2,0)-- (-1.5,-0.87);
\draw (-1.5,-0.87)-- (-1,-1.73);
\draw (0.5,0.87)-- (0,1.73);
\draw (0,1.73)-- (-0.5,0.87);
\draw (-0.5,0.87)-- (-1,1.73);
\draw (-1.5,0.87)-- (-0.5,0.87);
\draw (-1,0)-- (-0.5,0.87);
\draw (-2,0)-- (-1,0);
\draw (-1,0)-- (-1.5,0.87);
\draw (-1.5,-0.87)-- (-1,0);
\draw (-1,0)-- (-0.5,-0.87);
\draw (-0.5,-0.87)-- (-1.5,-0.87);
\draw (-1,-1.73)-- (-0.5,-0.87);
\draw (-0.5,-0.87)-- (0,-1.73);
\draw (0,-1.73)-- (0.5,-0.87);
\draw (0.5,-0.87)-- (-0.5,-0.87);
\draw (0.5,-0.87)-- (1,0);
\draw (0.5,-0.87)-- (0,0);
\draw (1,0)-- (0,0);
\draw (-0.5,-0.87)-- (0,0);
\draw (-1,0)-- (0,0);
\draw (0,0)-- (-0.5,0.87);
\draw (-0.5,0.87)-- (0.5,0.87);
\draw (0,0)-- (0.5,0.87);
\draw (1,0)-- (0.5,0.87);
\begin{scriptsize}
\fill [color=black] (0,0) circle (1.5pt);
\fill [color=black] (1,0) circle (1.5pt);
\fill [color=black] (0.5,0.87) circle (1.5pt);
\fill [color=black] (-0.5,0.87) circle (1.5pt);
\fill [color=black] (-1,0) circle (1.5pt);
\fill [color=black] (-0.5,-0.87) circle (1.5pt);
\fill [color=black] (0.5,-0.87) circle (1.5pt);
\fill [color=black] (0,1.73) circle (1.5pt);
\fill [color=black] (-1,1.73) circle (1.5pt);
\fill [color=black] (-1.5,0.87) circle (1.5pt);
\fill [color=black] (-2,0) circle (1.5pt);
\fill [color=black] (-1.5,-0.87) circle (1.5pt);
\fill [color=black] (-1,-1.73) circle (1.5pt);
\fill [color=black] (0,-1.73) circle (1.5pt);
\end{scriptsize}
\end{tikzpicture}
	\captionof{figure}{$G_H^-$}
	\label{fig:triangulargrid3}
\end{minipage}
\end{center}

\begin{lemma}
\label{lem:Isbell2}
If we embed $G_h^+$ (see Figure \ref{fig:triangulargrid2}) in the infinite triangular grid, then any colouring of it is contained in at most one Isbell colouring of $T$.
\end{lemma}

\begin{proof}
As it was stated above, an Isbell colouring can be generated from the colouring of any of its $G_h$'s with only two options and the colouring of the extra vertex determines which option is used.
\end{proof}

We will use the following lemmas later in the proof:

\begin{lemma}
\label{lem:length}
For any path or closed path $p$ in $G$, $L(p)<d_1l(p)$.
\end{lemma}

\begin{proof}
Since all pairs of neighbouring vertices have a distance less than $d_1$, the inequality directly follows from summing these inequalities up.
\end{proof}

\begin{lemma}
\label{lem:curvecontraction}
Any cycle $c$ can be contracted to a cycle $c'$, for which $int(c')\cup c'$ is a triangle using the following two kind of steps:

\begin{center}
\begin{minipage}{.4\textwidth}
	\centering
	\usetikzlibrary{arrows}
\definecolor{qqqqff}{rgb}{0,0,1}
\begin{tikzpicture}[line cap=round,line join=round,scale=0.25]
\clip(1.03,-6.92) rectangle (13.81,6.82);
\draw [line width=2pt] (10.64,-4.36)-- (11.76,-0.74);
\draw [line width=2pt] (11.76,-0.74)-- (10.62,0.82);
\draw [line width=2pt] (9.9,2.96)-- (7.66,3.18);
\draw [line width=2pt] (7.66,3.18)-- (6.88,4.58);
\draw [line width=2pt] (6.88,4.58)-- (4.7,4.5);
\draw [line width=2pt] (4.7,4.5)-- (2.96,3.06);
\draw [line width=2pt] (2.96,3.06)-- (1.58,1.08);
\draw [line width=2pt] (1.58,1.08)-- (2.7,-1.62);
\draw [line width=2pt] (2.7,-1.62)-- (3.52,-5.14);
\draw (7.44,-2.34)-- (2.7,-1.62);
\draw (7.44,-2.34)-- (10.62,0.82);
\draw (10.64,-4.36)-- (10.62,0.82);
\draw (10.62,0.82)-- (7.22,0.84);
\draw (7.22,0.84)-- (7.44,-2.34);
\draw (7.22,0.84)-- (7.66,3.18);
\draw (4.7,0.44)-- (2.7,-1.62);
\draw (1.58,1.08)-- (4.7,0.44);
\draw (4.7,0.44)-- (7.44,-2.34);
\draw (3.52,-5.14)-- (2.62,-6.6);
\draw (3.52,-5.14)-- (8.84,-5.5);
\draw (3.52,-5.14)-- (0.08,-1.22);
\draw (10.64,-4.36)-- (8.84,-5.5);
\draw (10.64,-4.36)-- (12.58,-3.94);
\draw (12.58,-3.94)-- (13.42,-1.9);
\draw (12.58,-3.94)-- (11.76,-0.74);
\draw (13.42,-1.9)-- (11.76,-0.74);
\draw (11.76,-0.74)-- (12.18,0.48);
\draw (12.18,0.48)-- (9.9,2.96);
\draw (12.18,0.48)-- (11.04,4.7);
\draw (7.6,5.24)-- (6.88,4.58);
\draw (4.1,5.88)-- (4.7,4.5);
\draw (4.1,5.88)-- (-0.18,5.6);
\draw (-0.18,5.6)-- (2.96,3.06);
\draw (-0.18,5.6)-- (1.58,1.08);
\draw (1.58,1.08)-- (0.08,-1.22);
\draw (2.7,-1.62)-- (0.08,-1.22);
\draw (0.08,-1.22)-- (2.62,-6.6);
\draw (2.62,-6.6)-- (8.84,-5.5);
\draw (8.84,-5.5)-- (10.36,-6.56);
\draw (10.36,-6.56)-- (12.58,-3.94);
\draw (10.36,-6.56)-- (14.78,-5.7);
\draw (14.78,-5.7)-- (12.58,-3.94);
\draw (14.78,-5.7)-- (13.42,-1.9);
\draw (13.42,-1.9)-- (13.1,0.84);
\draw (13.1,0.84)-- (12.18,0.48);
\draw (13.42,-1.9)-- (15.22,-1.66);
\draw (15.22,-1.66)-- (13.1,0.84);
\draw (13.1,0.84)-- (11.94,4.88);
\draw (14.4,5.9)-- (11.94,4.88);
\draw (11.94,4.88)-- (11.04,4.7);
\draw (7.6,5.24)-- (9,6.18);
\draw (9.9,2.96)-- (9,6.18);
\draw (11.04,4.7)-- (9,6.18);
\draw (-1.86,5.2)-- (0.08,-1.22);
\draw (-0.18,5.6)-- (-1.86,5.2);
\draw (-1.86,5.2)-- (-3.72,8.7);
\draw (-4.98,-8.3)-- (1.98,-9.04);
\draw (1.98,-9.04)-- (2.62,-6.6);
\draw (1.98,-9.04)-- (10.36,-6.56);
\draw (1.98,-9.04)-- (12.16,-7.26);
\draw (12.16,-7.26)-- (10.36,-6.56);
\draw (12.16,-7.26)-- (14.78,-5.7);
\draw (12.16,-7.26)-- (15.92,-7.82);
\draw (15.92,-7.82)-- (1.98,-9.04);
\draw (15.92,-7.82)-- (14.78,-5.7);
\draw (-3.72,8.7)-- (6.5,8.88);
\draw (-0.18,5.6)-- (6.5,8.88);
\draw (-1.86,5.2)-- (1.58,1.08);
\draw (-1.76,-2.94)-- (1.98,-9.04);
\draw (-1.76,-2.94)-- (2.62,-6.6);
\draw (-1.76,-2.94)-- (0.08,-1.22);
\draw (-1.76,-2.94)-- (-4.98,-8.3);
\draw (-1.76,-2.94)-- (-1.86,5.2);
\draw (-4.98,-8.3)-- (-3.72,8.7);
\draw (-1.86,5.2)-- (6.5,8.88);
\draw (11.04,4.7)-- (13.1,0.84);
\draw (4.7,4.5)-- (-0.18,5.6);
\draw (4.1,5.88)-- (7.84,8.88);
\draw (9,6.18)-- (7.84,8.88);
\draw (7.6,5.24)-- (7.84,8.88);
\draw (6.88,4.58)-- (7.84,8.88);
\draw (6.5,8.88)-- (7.84,8.88);
\draw (-3.72,8.7)-- (-1.76,-2.94);
\draw (11.04,4.7)-- (8.98,9.52);
\draw (7.84,8.88)-- (8.98,9.52);
\draw (9,6.18)-- (8.98,9.52);
\draw (8.98,9.52)-- (6.5,8.88);
\draw (13.42,-1.9)-- (12.18,0.48);
\draw (11.94,4.88)-- (9.74,11);
\draw (11.04,4.7)-- (9.74,11);
\draw (8.98,9.52)-- (-3.72,8.7);
\draw (8.98,9.52)-- (9.74,11);
\draw (-3.72,8.7)-- (9.74,11);
\draw [shift={(-5.41,-5.59)}] plot[domain=0.33:0.69,variable=\t]({1*18.61*cos(\t r)+0*18.61*sin(\t r)},{0*18.61*cos(\t r)+1*18.61*sin(\t r)});
\draw [shift={(3.21,0.36)}] plot[domain=0.86:1.41,variable=\t]({1*5.59*cos(\t r)+0*5.59*sin(\t r)},{0*5.59*cos(\t r)+1*5.59*sin(\t r)});
\draw [shift={(8.09,1.57)}] plot[domain=4.82:5.4,variable=\t]({1*7.11*cos(\t r)+0*7.11*sin(\t r)},{0*7.11*cos(\t r)+1*7.11*sin(\t r)});
\draw [shift={(9.8,3.78)}] plot[domain=2.14:2.79,variable=\t]({1*6.07*cos(\t r)+0*6.07*sin(\t r)},{0*6.07*cos(\t r)+1*6.07*sin(\t r)});
\draw [shift={(8.57,-18.33)},line width=2pt]  plot[domain=1.42:1.94,variable=\t]({1*14.13*cos(\t r)+0*14.13*sin(\t r)},{0*14.13*cos(\t r)+1*14.13*sin(\t r)});
\draw [shift={(2.55,-3.05)}] plot[domain=0.25:0.69,variable=\t]({1*9.49*cos(\t r)+0*9.49*sin(\t r)},{0*9.49*cos(\t r)+1*9.49*sin(\t r)});
\draw [shift={(12.95,6.78)}] plot[domain=3.74:4.34,variable=\t]({1*6.4*cos(\t r)+0*6.4*sin(\t r)},{0*6.4*cos(\t r)+1*6.4*sin(\t r)});
\draw [shift={(17.64,4.37)},line width=2pt]  plot[domain=3.32:3.61,variable=\t]({1*7.86*cos(\t r)+0*7.86*sin(\t r)},{0*7.86*cos(\t r)+1*7.86*sin(\t r)});
\draw [shift={(10.68,-11.02)}] plot[domain=1.93:2.45,variable=\t]({1*9.26*cos(\t r)+0*9.26*sin(\t r)},{0*9.26*cos(\t r)+1*9.26*sin(\t r)});
\draw [shift={(5.83,-8.44)}] plot[domain=0.7:1.31,variable=\t]({1*6.31*cos(\t r)+0*6.31*sin(\t r)},{0*6.31*cos(\t r)+1*6.31*sin(\t r)});
\draw [shift={(6.32,25.58)}] plot[domain=4.6:4.84,variable=\t]({1*32.4*cos(\t r)+0*32.4*sin(\t r)},{0*32.4*cos(\t r)+1*32.4*sin(\t r)});
\draw [shift={(9.59,11.48)}] plot[domain=4.1:4.48,variable=\t]({1*8.52*cos(\t r)+0*8.52*sin(\t r)},{0*8.52*cos(\t r)+1*8.52*sin(\t r)});
\draw [shift={(0.2,-5.1)}] plot[domain=0.23:0.7,variable=\t]({1*15.41*cos(\t r)+0*15.41*sin(\t r)},{0*15.41*cos(\t r)+1*15.41*sin(\t r)});
\draw [shift={(3.92,-0.78)}] plot[domain=0.56:1.02,variable=\t]({1*7.05*cos(\t r)+0*7.05*sin(\t r)},{0*7.05*cos(\t r)+1*7.05*sin(\t r)});
\draw [shift={(1.06,4.02)}] plot[domain=-0.13:0.18,variable=\t]({1*6.65*cos(\t r)+0*6.65*sin(\t r)},{0*6.65*cos(\t r)+1*6.65*sin(\t r)});
\draw [shift={(8.62,4.93)}] plot[domain=3.46:3.99,variable=\t]({1*5.97*cos(\t r)+0*5.97*sin(\t r)},{0*5.97*cos(\t r)+1*5.97*sin(\t r)});
\draw [shift={(5.22,5.29)}] plot[domain=4.61:5.13,variable=\t]({1*4.88*cos(\t r)+0*4.88*sin(\t r)},{0*4.88*cos(\t r)+1*4.88*sin(\t r)});
\draw [shift={(9.34,10.11)}] plot[domain=3.98:4.49,variable=\t]({1*9.5*cos(\t r)+0*9.5*sin(\t r)},{0*9.5*cos(\t r)+1*9.5*sin(\t r)});
\draw [shift={(12.13,6.92)}] plot[domain=3.46:4.03,variable=\t]({1*7.81*cos(\t r)+0*7.81*sin(\t r)},{0*7.81*cos(\t r)+1*7.81*sin(\t r)});
\begin{scriptsize}
\fill [color=black] (3.52,-5.14) circle (5pt);
\fill [color=black] (10.64,-4.36) circle (5pt);
\fill [color=black] (11.76,-0.74) circle (5pt);
\fill [color=black] (7.22,0.84) circle (5pt);
\fill [color=black] (4.7,0.44) circle (5pt);
\fill [color=black] (7.44,-2.34) circle (5pt);
\fill [color=black] (10.62,0.82) circle (5pt);
\fill [color=black] (9.9,2.96) circle (5pt);
\fill [color=black] (6.88,4.58) circle (5pt);
\fill [color=black] (2.96,3.06) circle (5pt);
\fill [color=black] (2.7,-1.62) circle (5pt);
\fill [color=black] (1.58,1.08) circle (5pt);
\fill [color=black] (7.66,3.18) circle (5pt);
\fill [color=black] (4.7,4.5) circle (5pt);
\fill [color=black] (2.62,-6.6) circle (5pt);
\fill [color=black] (8.84,-5.5) circle (5pt);
\fill [color=black] (12.58,-3.94) circle (5pt);
\fill [color=black] (13.42,-1.9) circle (5pt);
\fill [color=black] (12.18,0.48) circle (5pt);
\fill [color=black] (11.04,4.7) circle (5pt);
\fill [color=black] (7.6,5.24) circle (5pt);
\fill [color=black] (4.1,5.88) circle (5pt);
\fill [color=qqqqff] (-0.18,5.6) circle (1.5pt);
\fill [color=qqqqff] (0.08,-1.22) circle (1.5pt);
\fill [color=qqqqff] (-1.86,5.2) circle (1.5pt);
\fill [color=qqqqff] (-3.72,8.7) circle (1.5pt);
\fill [color=black] (13.1,0.84) circle (5pt);
\fill [color=black] (11.94,4.88) circle (5pt);
\fill [color=black] (10.36,-6.56) circle (5pt);
\fill [color=qqqqff] (14.78,-5.7) circle (1.5pt);
\fill [color=qqqqff] (15.22,-1.66) circle (1.5pt);
\fill [color=qqqqff] (14.4,5.9) circle (1.5pt);
\fill [color=qqqqff] (7.84,8.88) circle (1.5pt);
\fill [color=black] (9,6.18) circle (5pt);
\fill [color=qqqqff] (6.5,8.88) circle (1.5pt);
\fill [color=qqqqff] (-4.98,-8.3) circle (1.5pt);
\fill [color=qqqqff] (1.98,-9.04) circle (1.5pt);
\fill [color=qqqqff] (12.16,-7.26) circle (1.5pt);
\fill [color=qqqqff] (15.92,-7.82) circle (1.5pt);
\fill [color=qqqqff] (-1.76,-2.94) circle (1.5pt);
\fill [color=qqqqff] (8.98,9.52) circle (1.5pt);
\fill [color=qqqqff] (9.74,11) circle (1.5pt);
\end{scriptsize}
\end{tikzpicture}
\end{minipage}
	$\rightarrow$
\begin{minipage}{.4\textwidth}
	\centering
	\usetikzlibrary{arrows}
\definecolor{qqqqff}{rgb}{0,0,1}
\begin{tikzpicture}[line cap=round,line join=round,scale=0.25]
\clip(1.03,-6.92) rectangle (13.81,6.82);
\draw [line width=2pt] (10.64,-4.36)-- (11.76,-0.74);
\draw [line width=2pt] (11.76,-0.74)-- (10.62,0.82);
\draw (9.9,2.96)-- (7.66,3.18);
\draw [line width=2pt] (7.66,3.18)-- (6.88,4.58);
\draw [line width=2pt] (6.88,4.58)-- (4.7,4.5);
\draw [line width=2pt] (4.7,4.5)-- (2.96,3.06);
\draw [line width=2pt] (2.96,3.06)-- (1.58,1.08);
\draw [line width=2pt] (1.58,1.08)-- (2.7,-1.62);
\draw [line width=2pt] (2.7,-1.62)-- (3.52,-5.14);
\draw (7.44,-2.34)-- (2.7,-1.62);
\draw (7.44,-2.34)-- (10.62,0.82);
\draw (10.64,-4.36)-- (10.62,0.82);
\draw (10.62,0.82)-- (7.22,0.84);
\draw (7.22,0.84)-- (7.44,-2.34);
\draw (7.22,0.84)-- (7.66,3.18);
\draw (4.7,0.44)-- (2.7,-1.62);
\draw (1.58,1.08)-- (4.7,0.44);
\draw (4.7,0.44)-- (7.44,-2.34);
\draw (3.52,-5.14)-- (2.62,-6.6);
\draw (3.52,-5.14)-- (8.84,-5.5);
\draw (3.52,-5.14)-- (0.08,-1.22);
\draw (10.64,-4.36)-- (8.84,-5.5);
\draw (10.64,-4.36)-- (12.58,-3.94);
\draw (12.58,-3.94)-- (13.42,-1.9);
\draw (12.58,-3.94)-- (11.76,-0.74);
\draw (13.42,-1.9)-- (11.76,-0.74);
\draw (11.76,-0.74)-- (12.18,0.48);
\draw (12.18,0.48)-- (9.9,2.96);
\draw (12.18,0.48)-- (11.04,4.7);
\draw (7.6,5.24)-- (6.88,4.58);
\draw (4.1,5.88)-- (4.7,4.5);
\draw (4.1,5.88)-- (-0.18,5.6);
\draw (-0.18,5.6)-- (2.96,3.06);
\draw (-0.18,5.6)-- (1.58,1.08);
\draw (1.58,1.08)-- (0.08,-1.22);
\draw (2.7,-1.62)-- (0.08,-1.22);
\draw (0.08,-1.22)-- (2.62,-6.6);
\draw (2.62,-6.6)-- (8.84,-5.5);
\draw (8.84,-5.5)-- (10.36,-6.56);
\draw (10.36,-6.56)-- (12.58,-3.94);
\draw (10.36,-6.56)-- (14.78,-5.7);
\draw (14.78,-5.7)-- (12.58,-3.94);
\draw (14.78,-5.7)-- (13.42,-1.9);
\draw (13.42,-1.9)-- (13.1,0.84);
\draw (13.1,0.84)-- (12.18,0.48);
\draw (13.42,-1.9)-- (15.22,-1.66);
\draw (15.22,-1.66)-- (13.1,0.84);
\draw (13.1,0.84)-- (11.94,4.88);
\draw (14.4,5.9)-- (11.94,4.88);
\draw (11.94,4.88)-- (11.04,4.7);
\draw (7.6,5.24)-- (9,6.18);
\draw (9.9,2.96)-- (9,6.18);
\draw (11.04,4.7)-- (9,6.18);
\draw (-1.86,5.2)-- (0.08,-1.22);
\draw (-0.18,5.6)-- (-1.86,5.2);
\draw (-1.86,5.2)-- (-3.72,8.7);
\draw (-4.98,-8.3)-- (1.98,-9.04);
\draw (1.98,-9.04)-- (2.62,-6.6);
\draw (1.98,-9.04)-- (10.36,-6.56);
\draw (1.98,-9.04)-- (12.16,-7.26);
\draw (12.16,-7.26)-- (10.36,-6.56);
\draw (12.16,-7.26)-- (14.78,-5.7);
\draw (12.16,-7.26)-- (15.92,-7.82);
\draw (15.92,-7.82)-- (1.98,-9.04);
\draw (15.92,-7.82)-- (14.78,-5.7);
\draw (-3.72,8.7)-- (6.5,8.88);
\draw (-0.18,5.6)-- (6.5,8.88);
\draw (-1.86,5.2)-- (1.58,1.08);
\draw (-1.76,-2.94)-- (1.98,-9.04);
\draw (-1.76,-2.94)-- (2.62,-6.6);
\draw (-1.76,-2.94)-- (0.08,-1.22);
\draw (-1.76,-2.94)-- (-4.98,-8.3);
\draw (-1.76,-2.94)-- (-1.86,5.2);
\draw (-4.98,-8.3)-- (-3.72,8.7);
\draw (-1.86,5.2)-- (6.5,8.88);
\draw (11.04,4.7)-- (13.1,0.84);
\draw (4.7,4.5)-- (-0.18,5.6);
\draw (4.1,5.88)-- (7.84,8.88);
\draw (9,6.18)-- (7.84,8.88);
\draw (7.6,5.24)-- (7.84,8.88);
\draw (6.88,4.58)-- (7.84,8.88);
\draw (6.5,8.88)-- (7.84,8.88);
\draw (-3.72,8.7)-- (-1.76,-2.94);
\draw (11.04,4.7)-- (8.98,9.52);
\draw (7.84,8.88)-- (8.98,9.52);
\draw (9,6.18)-- (8.98,9.52);
\draw (8.98,9.52)-- (6.5,8.88);
\draw (13.42,-1.9)-- (12.18,0.48);
\draw (11.94,4.88)-- (9.74,11);
\draw (11.04,4.7)-- (9.74,11);
\draw (8.98,9.52)-- (-3.72,8.7);
\draw (8.98,9.52)-- (9.74,11);
\draw (-3.72,8.7)-- (9.74,11);
\draw [shift={(-5.41,-5.59)}] plot[domain=0.33:0.69,variable=\t]({1*18.61*cos(\t r)+0*18.61*sin(\t r)},{0*18.61*cos(\t r)+1*18.61*sin(\t r)});
\draw [shift={(3.21,0.36)}] plot[domain=0.86:1.41,variable=\t]({1*5.59*cos(\t r)+0*5.59*sin(\t r)},{0*5.59*cos(\t r)+1*5.59*sin(\t r)});
\draw [shift={(8.09,1.57)}] plot[domain=4.82:5.4,variable=\t]({1*7.11*cos(\t r)+0*7.11*sin(\t r)},{0*7.11*cos(\t r)+1*7.11*sin(\t r)});
\draw [shift={(9.8,3.78)}] plot[domain=2.14:2.79,variable=\t]({1*6.07*cos(\t r)+0*6.07*sin(\t r)},{0*6.07*cos(\t r)+1*6.07*sin(\t r)});
\draw [shift={(8.57,-18.33)},line width=2pt]  plot[domain=1.42:1.94,variable=\t]({1*14.13*cos(\t r)+0*14.13*sin(\t r)},{0*14.13*cos(\t r)+1*14.13*sin(\t r)});
\draw [shift={(2.55,-3.05)}] plot[domain=0.25:0.69,variable=\t]({1*9.49*cos(\t r)+0*9.49*sin(\t r)},{0*9.49*cos(\t r)+1*9.49*sin(\t r)});
\draw [shift={(12.95,6.78)},line width=2pt] plot[domain=3.74:4.34,variable=\t]({1*6.4*cos(\t r)+0*6.4*sin(\t r)},{0*6.4*cos(\t r)+1*6.4*sin(\t r)});
\draw [shift={(17.64,4.37)}]  plot[domain=3.32:3.61,variable=\t]({1*7.86*cos(\t r)+0*7.86*sin(\t r)},{0*7.86*cos(\t r)+1*7.86*sin(\t r)});
\draw [shift={(10.68,-11.02)}] plot[domain=1.93:2.45,variable=\t]({1*9.26*cos(\t r)+0*9.26*sin(\t r)},{0*9.26*cos(\t r)+1*9.26*sin(\t r)});
\draw [shift={(5.83,-8.44)}] plot[domain=0.7:1.31,variable=\t]({1*6.31*cos(\t r)+0*6.31*sin(\t r)},{0*6.31*cos(\t r)+1*6.31*sin(\t r)});
\draw [shift={(6.32,25.58)}] plot[domain=4.6:4.84,variable=\t]({1*32.4*cos(\t r)+0*32.4*sin(\t r)},{0*32.4*cos(\t r)+1*32.4*sin(\t r)});
\draw [shift={(9.59,11.48)}] plot[domain=4.1:4.48,variable=\t]({1*8.52*cos(\t r)+0*8.52*sin(\t r)},{0*8.52*cos(\t r)+1*8.52*sin(\t r)});
\draw [shift={(0.2,-5.1)}] plot[domain=0.23:0.7,variable=\t]({1*15.41*cos(\t r)+0*15.41*sin(\t r)},{0*15.41*cos(\t r)+1*15.41*sin(\t r)});
\draw [shift={(3.92,-0.78)}] plot[domain=0.56:1.02,variable=\t]({1*7.05*cos(\t r)+0*7.05*sin(\t r)},{0*7.05*cos(\t r)+1*7.05*sin(\t r)});
\draw [shift={(1.06,4.02)}] plot[domain=-0.13:0.18,variable=\t]({1*6.65*cos(\t r)+0*6.65*sin(\t r)},{0*6.65*cos(\t r)+1*6.65*sin(\t r)});
\draw [shift={(8.62,4.93)}] plot[domain=3.46:3.99,variable=\t]({1*5.97*cos(\t r)+0*5.97*sin(\t r)},{0*5.97*cos(\t r)+1*5.97*sin(\t r)});
\draw [shift={(5.22,5.29)}] plot[domain=4.61:5.13,variable=\t]({1*4.88*cos(\t r)+0*4.88*sin(\t r)},{0*4.88*cos(\t r)+1*4.88*sin(\t r)});
\draw [shift={(9.34,10.11)}] plot[domain=3.98:4.49,variable=\t]({1*9.5*cos(\t r)+0*9.5*sin(\t r)},{0*9.5*cos(\t r)+1*9.5*sin(\t r)});
\draw [shift={(12.13,6.92)}] plot[domain=3.46:4.03,variable=\t]({1*7.81*cos(\t r)+0*7.81*sin(\t r)},{0*7.81*cos(\t r)+1*7.81*sin(\t r)});
\begin{scriptsize}
\fill [color=black] (3.52,-5.14) circle (5pt);
\fill [color=black] (10.64,-4.36) circle (5pt);
\fill [color=black] (11.76,-0.74) circle (5pt);
\fill [color=black] (7.22,0.84) circle (5pt);
\fill [color=black] (4.7,0.44) circle (5pt);
\fill [color=black] (7.44,-2.34) circle (5pt);
\fill [color=black] (10.62,0.82) circle (5pt);
\fill [color=black] (9.9,2.96) circle (5pt);
\fill [color=black] (6.88,4.58) circle (5pt);
\fill [color=black] (2.96,3.06) circle (5pt);
\fill [color=black] (2.7,-1.62) circle (5pt);
\fill [color=black] (1.58,1.08) circle (5pt);
\fill [color=black] (7.66,3.18) circle (5pt);
\fill [color=black] (4.7,4.5) circle (5pt);
\fill [color=black] (2.62,-6.6) circle (5pt);
\fill [color=black] (8.84,-5.5) circle (5pt);
\fill [color=black] (12.58,-3.94) circle (5pt);
\fill [color=black] (13.42,-1.9) circle (5pt);
\fill [color=black] (12.18,0.48) circle (5pt);
\fill [color=black] (11.04,4.7) circle (5pt);
\fill [color=black] (7.6,5.24) circle (5pt);
\fill [color=black] (4.1,5.88) circle (5pt);
\fill [color=qqqqff] (-0.18,5.6) circle (1.5pt);
\fill [color=qqqqff] (0.08,-1.22) circle (1.5pt);
\fill [color=qqqqff] (-1.86,5.2) circle (1.5pt);
\fill [color=qqqqff] (-3.72,8.7) circle (1.5pt);
\fill [color=black] (13.1,0.84) circle (5pt);
\fill [color=black] (11.94,4.88) circle (5pt);
\fill [color=black] (10.36,-6.56) circle (5pt);
\fill [color=qqqqff] (14.78,-5.7) circle (1.5pt);
\fill [color=qqqqff] (15.22,-1.66) circle (1.5pt);
\fill [color=qqqqff] (14.4,5.9) circle (1.5pt);
\fill [color=qqqqff] (7.84,8.88) circle (1.5pt);
\fill [color=black] (9,6.18) circle (5pt);
\fill [color=qqqqff] (6.5,8.88) circle (1.5pt);
\fill [color=qqqqff] (-4.98,-8.3) circle (1.5pt);
\fill [color=qqqqff] (1.98,-9.04) circle (1.5pt);
\fill [color=qqqqff] (12.16,-7.26) circle (1.5pt);
\fill [color=qqqqff] (15.92,-7.82) circle (1.5pt);
\fill [color=qqqqff] (-1.76,-2.94) circle (1.5pt);
\fill [color=qqqqff] (8.98,9.52) circle (1.5pt);
\fill [color=qqqqff] (9.74,11) circle (1.5pt);
\end{scriptsize}
\end{tikzpicture}
\end{minipage}
	
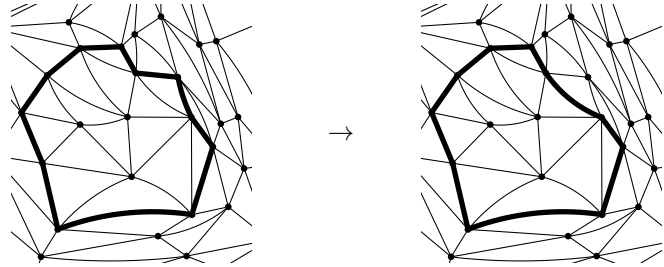
\captionof{figure}{Step type 1}
	\label{curvecontraction1}
\end{center}

1. Take a triangle $v_iv_jv_k$ in $int(c)\cup c$, which has exactly two sides in common with $c$. These two sides trivially form a $2$-path and we can assume without loss of generality that they are $(v_i,v_j)$ and $(v_j,v_k)$. Throw these sides out of $c$ and replace them with $(v_i,v_k)$.

\begin{center}
\begin{minipage}{.4\textwidth}
	\centering
	\usetikzlibrary{arrows}
\definecolor{qqqqff}{rgb}{0,0,1}
\begin{tikzpicture}[line cap=round,line join=round,scale=0.25]
\clip(1.03,-6.92) rectangle (13.81,6.82);
\draw [line width=2pt] (10.64,-4.36)-- (11.76,-0.74);
\draw [line width=2pt] (11.76,-0.74)-- (10.62,0.82);
\draw (9.9,2.96)-- (7.66,3.18);
\draw [line width=2pt] (7.66,3.18)-- (6.88,4.58);
\draw [line width=2pt] (6.88,4.58)-- (4.7,4.5);
\draw [line width=2pt] (4.7,4.5)-- (2.96,3.06);
\draw [line width=2pt] (2.96,3.06)-- (1.58,1.08);
\draw [line width=2pt] (1.58,1.08)-- (2.7,-1.62);
\draw [line width=2pt] (2.7,-1.62)-- (3.52,-5.14);
\draw (7.44,-2.34)-- (2.7,-1.62);
\draw (7.44,-2.34)-- (10.62,0.82);
\draw (10.64,-4.36)-- (10.62,0.82);
\draw (10.62,0.82)-- (7.22,0.84);
\draw (7.22,0.84)-- (7.44,-2.34);
\draw (7.22,0.84)-- (7.66,3.18);
\draw (4.7,0.44)-- (2.7,-1.62);
\draw (1.58,1.08)-- (4.7,0.44);
\draw (4.7,0.44)-- (7.44,-2.34);
\draw (3.52,-5.14)-- (2.62,-6.6);
\draw (3.52,-5.14)-- (8.84,-5.5);
\draw (3.52,-5.14)-- (0.08,-1.22);
\draw (10.64,-4.36)-- (8.84,-5.5);
\draw (10.64,-4.36)-- (12.58,-3.94);
\draw (12.58,-3.94)-- (13.42,-1.9);
\draw (12.58,-3.94)-- (11.76,-0.74);
\draw (13.42,-1.9)-- (11.76,-0.74);
\draw (11.76,-0.74)-- (12.18,0.48);
\draw (12.18,0.48)-- (9.9,2.96);
\draw (12.18,0.48)-- (11.04,4.7);
\draw (7.6,5.24)-- (6.88,4.58);
\draw (4.1,5.88)-- (4.7,4.5);
\draw (4.1,5.88)-- (-0.18,5.6);
\draw (-0.18,5.6)-- (2.96,3.06);
\draw (-0.18,5.6)-- (1.58,1.08);
\draw (1.58,1.08)-- (0.08,-1.22);
\draw (2.7,-1.62)-- (0.08,-1.22);
\draw (0.08,-1.22)-- (2.62,-6.6);
\draw (2.62,-6.6)-- (8.84,-5.5);
\draw (8.84,-5.5)-- (10.36,-6.56);
\draw (10.36,-6.56)-- (12.58,-3.94);
\draw (10.36,-6.56)-- (14.78,-5.7);
\draw (14.78,-5.7)-- (12.58,-3.94);
\draw (14.78,-5.7)-- (13.42,-1.9);
\draw (13.42,-1.9)-- (13.1,0.84);
\draw (13.1,0.84)-- (12.18,0.48);
\draw (13.42,-1.9)-- (15.22,-1.66);
\draw (15.22,-1.66)-- (13.1,0.84);
\draw (13.1,0.84)-- (11.94,4.88);
\draw (14.4,5.9)-- (11.94,4.88);
\draw (11.94,4.88)-- (11.04,4.7);
\draw (7.6,5.24)-- (9,6.18);
\draw (9.9,2.96)-- (9,6.18);
\draw (11.04,4.7)-- (9,6.18);
\draw (-1.86,5.2)-- (0.08,-1.22);
\draw (-0.18,5.6)-- (-1.86,5.2);
\draw (-1.86,5.2)-- (-3.72,8.7);
\draw (-4.98,-8.3)-- (1.98,-9.04);
\draw (1.98,-9.04)-- (2.62,-6.6);
\draw (1.98,-9.04)-- (10.36,-6.56);
\draw (1.98,-9.04)-- (12.16,-7.26);
\draw (12.16,-7.26)-- (10.36,-6.56);
\draw (12.16,-7.26)-- (14.78,-5.7);
\draw (12.16,-7.26)-- (15.92,-7.82);
\draw (15.92,-7.82)-- (1.98,-9.04);
\draw (15.92,-7.82)-- (14.78,-5.7);
\draw (-3.72,8.7)-- (6.5,8.88);
\draw (-0.18,5.6)-- (6.5,8.88);
\draw (-1.86,5.2)-- (1.58,1.08);
\draw (-1.76,-2.94)-- (1.98,-9.04);
\draw (-1.76,-2.94)-- (2.62,-6.6);
\draw (-1.76,-2.94)-- (0.08,-1.22);
\draw (-1.76,-2.94)-- (-4.98,-8.3);
\draw (-1.76,-2.94)-- (-1.86,5.2);
\draw (-4.98,-8.3)-- (-3.72,8.7);
\draw (-1.86,5.2)-- (6.5,8.88);
\draw (11.04,4.7)-- (13.1,0.84);
\draw (4.7,4.5)-- (-0.18,5.6);
\draw (4.1,5.88)-- (7.84,8.88);
\draw (9,6.18)-- (7.84,8.88);
\draw (7.6,5.24)-- (7.84,8.88);
\draw (6.88,4.58)-- (7.84,8.88);
\draw (6.5,8.88)-- (7.84,8.88);
\draw (-3.72,8.7)-- (-1.76,-2.94);
\draw (11.04,4.7)-- (8.98,9.52);
\draw (7.84,8.88)-- (8.98,9.52);
\draw (9,6.18)-- (8.98,9.52);
\draw (8.98,9.52)-- (6.5,8.88);
\draw (13.42,-1.9)-- (12.18,0.48);
\draw (11.94,4.88)-- (9.74,11);
\draw (11.04,4.7)-- (9.74,11);
\draw (8.98,9.52)-- (-3.72,8.7);
\draw (8.98,9.52)-- (9.74,11);
\draw (-3.72,8.7)-- (9.74,11);
\draw [shift={(-5.41,-5.59)}] plot[domain=0.33:0.69,variable=\t]({1*18.61*cos(\t r)+0*18.61*sin(\t r)},{0*18.61*cos(\t r)+1*18.61*sin(\t r)});
\draw [shift={(3.21,0.36)}] plot[domain=0.86:1.41,variable=\t]({1*5.59*cos(\t r)+0*5.59*sin(\t r)},{0*5.59*cos(\t r)+1*5.59*sin(\t r)});
\draw [shift={(8.09,1.57)}] plot[domain=4.82:5.4,variable=\t]({1*7.11*cos(\t r)+0*7.11*sin(\t r)},{0*7.11*cos(\t r)+1*7.11*sin(\t r)});
\draw [shift={(9.8,3.78)}] plot[domain=2.14:2.79,variable=\t]({1*6.07*cos(\t r)+0*6.07*sin(\t r)},{0*6.07*cos(\t r)+1*6.07*sin(\t r)});
\draw [shift={(8.57,-18.33)},line width=2pt]  plot[domain=1.42:1.94,variable=\t]({1*14.13*cos(\t r)+0*14.13*sin(\t r)},{0*14.13*cos(\t r)+1*14.13*sin(\t r)});
\draw [shift={(2.55,-3.05)}] plot[domain=0.25:0.69,variable=\t]({1*9.49*cos(\t r)+0*9.49*sin(\t r)},{0*9.49*cos(\t r)+1*9.49*sin(\t r)});
\draw [shift={(12.95,6.78)},line width=2pt] plot[domain=3.74:4.34,variable=\t]({1*6.4*cos(\t r)+0*6.4*sin(\t r)},{0*6.4*cos(\t r)+1*6.4*sin(\t r)});
\draw [shift={(17.64,4.37)}]  plot[domain=3.32:3.61,variable=\t]({1*7.86*cos(\t r)+0*7.86*sin(\t r)},{0*7.86*cos(\t r)+1*7.86*sin(\t r)});
\draw [shift={(10.68,-11.02)}] plot[domain=1.93:2.45,variable=\t]({1*9.26*cos(\t r)+0*9.26*sin(\t r)},{0*9.26*cos(\t r)+1*9.26*sin(\t r)});
\draw [shift={(5.83,-8.44)}] plot[domain=0.7:1.31,variable=\t]({1*6.31*cos(\t r)+0*6.31*sin(\t r)},{0*6.31*cos(\t r)+1*6.31*sin(\t r)});
\draw [shift={(6.32,25.58)}] plot[domain=4.6:4.84,variable=\t]({1*32.4*cos(\t r)+0*32.4*sin(\t r)},{0*32.4*cos(\t r)+1*32.4*sin(\t r)});
\draw [shift={(9.59,11.48)}] plot[domain=4.1:4.48,variable=\t]({1*8.52*cos(\t r)+0*8.52*sin(\t r)},{0*8.52*cos(\t r)+1*8.52*sin(\t r)});
\draw [shift={(0.2,-5.1)}] plot[domain=0.23:0.7,variable=\t]({1*15.41*cos(\t r)+0*15.41*sin(\t r)},{0*15.41*cos(\t r)+1*15.41*sin(\t r)});
\draw [shift={(3.92,-0.78)}] plot[domain=0.56:1.02,variable=\t]({1*7.05*cos(\t r)+0*7.05*sin(\t r)},{0*7.05*cos(\t r)+1*7.05*sin(\t r)});
\draw [shift={(1.06,4.02)}] plot[domain=-0.13:0.18,variable=\t]({1*6.65*cos(\t r)+0*6.65*sin(\t r)},{0*6.65*cos(\t r)+1*6.65*sin(\t r)});
\draw [shift={(8.62,4.93)}] plot[domain=3.46:3.99,variable=\t]({1*5.97*cos(\t r)+0*5.97*sin(\t r)},{0*5.97*cos(\t r)+1*5.97*sin(\t r)});
\draw [shift={(5.22,5.29)}] plot[domain=4.61:5.13,variable=\t]({1*4.88*cos(\t r)+0*4.88*sin(\t r)},{0*4.88*cos(\t r)+1*4.88*sin(\t r)});
\draw [shift={(9.34,10.11)}] plot[domain=3.98:4.49,variable=\t]({1*9.5*cos(\t r)+0*9.5*sin(\t r)},{0*9.5*cos(\t r)+1*9.5*sin(\t r)});
\draw [shift={(12.13,6.92)}] plot[domain=3.46:4.03,variable=\t]({1*7.81*cos(\t r)+0*7.81*sin(\t r)},{0*7.81*cos(\t r)+1*7.81*sin(\t r)});
\begin{scriptsize}
\fill [color=black] (3.52,-5.14) circle (5pt);
\fill [color=black] (10.64,-4.36) circle (5pt);
\fill [color=black] (11.76,-0.74) circle (5pt);
\fill [color=black] (7.22,0.84) circle (5pt);
\fill [color=black] (4.7,0.44) circle (5pt);
\fill [color=black] (7.44,-2.34) circle (5pt);
\fill [color=black] (10.62,0.82) circle (5pt);
\fill [color=black] (9.9,2.96) circle (5pt);
\fill [color=black] (6.88,4.58) circle (5pt);
\fill [color=black] (2.96,3.06) circle (5pt);
\fill [color=black] (2.7,-1.62) circle (5pt);
\fill [color=black] (1.58,1.08) circle (5pt);
\fill [color=black] (7.66,3.18) circle (5pt);
\fill [color=black] (4.7,4.5) circle (5pt);
\fill [color=black] (2.62,-6.6) circle (5pt);
\fill [color=black] (8.84,-5.5) circle (5pt);
\fill [color=black] (12.58,-3.94) circle (5pt);
\fill [color=black] (13.42,-1.9) circle (5pt);
\fill [color=black] (12.18,0.48) circle (5pt);
\fill [color=black] (11.04,4.7) circle (5pt);
\fill [color=black] (7.6,5.24) circle (5pt);
\fill [color=black] (4.1,5.88) circle (5pt);
\fill [color=qqqqff] (-0.18,5.6) circle (1.5pt);
\fill [color=qqqqff] (0.08,-1.22) circle (1.5pt);
\fill [color=qqqqff] (-1.86,5.2) circle (1.5pt);
\fill [color=qqqqff] (-3.72,8.7) circle (1.5pt);
\fill [color=black] (13.1,0.84) circle (5pt);
\fill [color=black] (11.94,4.88) circle (5pt);
\fill [color=black] (10.36,-6.56) circle (5pt);
\fill [color=qqqqff] (14.78,-5.7) circle (1.5pt);
\fill [color=qqqqff] (15.22,-1.66) circle (1.5pt);
\fill [color=qqqqff] (14.4,5.9) circle (1.5pt);
\fill [color=qqqqff] (7.84,8.88) circle (1.5pt);
\fill [color=black] (9,6.18) circle (5pt);
\fill [color=qqqqff] (6.5,8.88) circle (1.5pt);
\fill [color=qqqqff] (-4.98,-8.3) circle (1.5pt);
\fill [color=qqqqff] (1.98,-9.04) circle (1.5pt);
\fill [color=qqqqff] (12.16,-7.26) circle (1.5pt);
\fill [color=qqqqff] (15.92,-7.82) circle (1.5pt);
\fill [color=qqqqff] (-1.76,-2.94) circle (1.5pt);
\fill [color=qqqqff] (8.98,9.52) circle (1.5pt);
\fill [color=qqqqff] (9.74,11) circle (1.5pt);
\end{scriptsize}
\end{tikzpicture}
\end{minipage}
	$\rightarrow$
\begin{minipage}{.4\textwidth}
	\centering
	\usetikzlibrary{arrows}
\definecolor{qqqqff}{rgb}{0,0,1}
\begin{tikzpicture}[line cap=round,line join=round,scale=0.25]
\clip(1.03,-6.92) rectangle (13.81,6.82);
\draw [line width=2pt] (10.64,-4.36)-- (11.76,-0.74);
\draw [line width=2pt] (11.76,-0.74)-- (10.62,0.82);
\draw (9.9,2.96)-- (7.66,3.18);
\draw [line width=2pt] (7.66,3.18)-- (6.88,4.58);
\draw [line width=2pt] (6.88,4.58)-- (4.7,4.5);
\draw [line width=2pt] (4.7,4.5)-- (2.96,3.06);
\draw [line width=2pt] (2.96,3.06)-- (1.58,1.08);
\draw (1.58,1.08)-- (2.7,-1.62);
\draw [line width=2pt] (2.7,-1.62)-- (3.52,-5.14);
\draw (7.44,-2.34)-- (2.7,-1.62);
\draw (7.44,-2.34)-- (10.62,0.82);
\draw (10.64,-4.36)-- (10.62,0.82);
\draw (10.62,0.82)-- (7.22,0.84);
\draw (7.22,0.84)-- (7.44,-2.34);
\draw (7.22,0.84)-- (7.66,3.18);
\draw [line width=2pt] (4.7,0.44)-- (2.7,-1.62);
\draw [line width=2pt] (1.58,1.08)-- (4.7,0.44);
\draw (4.7,0.44)-- (7.44,-2.34);
\draw (3.52,-5.14)-- (2.62,-6.6);
\draw (3.52,-5.14)-- (8.84,-5.5);
\draw (3.52,-5.14)-- (0.08,-1.22);
\draw (10.64,-4.36)-- (8.84,-5.5);
\draw (10.64,-4.36)-- (12.58,-3.94);
\draw (12.58,-3.94)-- (13.42,-1.9);
\draw (12.58,-3.94)-- (11.76,-0.74);
\draw (13.42,-1.9)-- (11.76,-0.74);
\draw (11.76,-0.74)-- (12.18,0.48);
\draw (12.18,0.48)-- (9.9,2.96);
\draw (12.18,0.48)-- (11.04,4.7);
\draw (7.6,5.24)-- (6.88,4.58);
\draw (4.1,5.88)-- (4.7,4.5);
\draw (4.1,5.88)-- (-0.18,5.6);
\draw (-0.18,5.6)-- (2.96,3.06);
\draw (-0.18,5.6)-- (1.58,1.08);
\draw (1.58,1.08)-- (0.08,-1.22);
\draw (2.7,-1.62)-- (0.08,-1.22);
\draw (0.08,-1.22)-- (2.62,-6.6);
\draw (2.62,-6.6)-- (8.84,-5.5);
\draw (8.84,-5.5)-- (10.36,-6.56);
\draw (10.36,-6.56)-- (12.58,-3.94);
\draw (10.36,-6.56)-- (14.78,-5.7);
\draw (14.78,-5.7)-- (12.58,-3.94);
\draw (14.78,-5.7)-- (13.42,-1.9);
\draw (13.42,-1.9)-- (13.1,0.84);
\draw (13.1,0.84)-- (12.18,0.48);
\draw (13.42,-1.9)-- (15.22,-1.66);
\draw (15.22,-1.66)-- (13.1,0.84);
\draw (13.1,0.84)-- (11.94,4.88);
\draw (14.4,5.9)-- (11.94,4.88);
\draw (11.94,4.88)-- (11.04,4.7);
\draw (7.6,5.24)-- (9,6.18);
\draw (9.9,2.96)-- (9,6.18);
\draw (11.04,4.7)-- (9,6.18);
\draw (-1.86,5.2)-- (0.08,-1.22);
\draw (-0.18,5.6)-- (-1.86,5.2);
\draw (-1.86,5.2)-- (-3.72,8.7);
\draw (-4.98,-8.3)-- (1.98,-9.04);
\draw (1.98,-9.04)-- (2.62,-6.6);
\draw (1.98,-9.04)-- (10.36,-6.56);
\draw (1.98,-9.04)-- (12.16,-7.26);
\draw (12.16,-7.26)-- (10.36,-6.56);
\draw (12.16,-7.26)-- (14.78,-5.7);
\draw (12.16,-7.26)-- (15.92,-7.82);
\draw (15.92,-7.82)-- (1.98,-9.04);
\draw (15.92,-7.82)-- (14.78,-5.7);
\draw (-3.72,8.7)-- (6.5,8.88);
\draw (-0.18,5.6)-- (6.5,8.88);
\draw (-1.86,5.2)-- (1.58,1.08);
\draw (-1.76,-2.94)-- (1.98,-9.04);
\draw (-1.76,-2.94)-- (2.62,-6.6);
\draw (-1.76,-2.94)-- (0.08,-1.22);
\draw (-1.76,-2.94)-- (-4.98,-8.3);
\draw (-1.76,-2.94)-- (-1.86,5.2);
\draw (-4.98,-8.3)-- (-3.72,8.7);
\draw (-1.86,5.2)-- (6.5,8.88);
\draw (11.04,4.7)-- (13.1,0.84);
\draw (4.7,4.5)-- (-0.18,5.6);
\draw (4.1,5.88)-- (7.84,8.88);
\draw (9,6.18)-- (7.84,8.88);
\draw (7.6,5.24)-- (7.84,8.88);
\draw (6.88,4.58)-- (7.84,8.88);
\draw (6.5,8.88)-- (7.84,8.88);
\draw (-3.72,8.7)-- (-1.76,-2.94);
\draw (11.04,4.7)-- (8.98,9.52);
\draw (7.84,8.88)-- (8.98,9.52);
\draw (9,6.18)-- (8.98,9.52);
\draw (8.98,9.52)-- (6.5,8.88);
\draw (13.42,-1.9)-- (12.18,0.48);
\draw (11.94,4.88)-- (9.74,11);
\draw (11.04,4.7)-- (9.74,11);
\draw (8.98,9.52)-- (-3.72,8.7);
\draw (8.98,9.52)-- (9.74,11);
\draw (-3.72,8.7)-- (9.74,11);
\draw [shift={(-5.41,-5.59)}] plot[domain=0.33:0.69,variable=\t]({1*18.61*cos(\t r)+0*18.61*sin(\t r)},{0*18.61*cos(\t r)+1*18.61*sin(\t r)});
\draw [shift={(3.21,0.36)}] plot[domain=0.86:1.41,variable=\t]({1*5.59*cos(\t r)+0*5.59*sin(\t r)},{0*5.59*cos(\t r)+1*5.59*sin(\t r)});
\draw [shift={(8.09,1.57)}] plot[domain=4.82:5.4,variable=\t]({1*7.11*cos(\t r)+0*7.11*sin(\t r)},{0*7.11*cos(\t r)+1*7.11*sin(\t r)});
\draw [shift={(9.8,3.78)}] plot[domain=2.14:2.79,variable=\t]({1*6.07*cos(\t r)+0*6.07*sin(\t r)},{0*6.07*cos(\t r)+1*6.07*sin(\t r)});
\draw [shift={(8.57,-18.33)},line width=2pt]  plot[domain=1.42:1.94,variable=\t]({1*14.13*cos(\t r)+0*14.13*sin(\t r)},{0*14.13*cos(\t r)+1*14.13*sin(\t r)});
\draw [shift={(2.55,-3.05)}] plot[domain=0.25:0.69,variable=\t]({1*9.49*cos(\t r)+0*9.49*sin(\t r)},{0*9.49*cos(\t r)+1*9.49*sin(\t r)});
\draw [shift={(12.95,6.78)},line width=2pt] plot[domain=3.74:4.34,variable=\t]({1*6.4*cos(\t r)+0*6.4*sin(\t r)},{0*6.4*cos(\t r)+1*6.4*sin(\t r)});
\draw [shift={(17.64,4.37)}]  plot[domain=3.32:3.61,variable=\t]({1*7.86*cos(\t r)+0*7.86*sin(\t r)},{0*7.86*cos(\t r)+1*7.86*sin(\t r)});
\draw [shift={(10.68,-11.02)}] plot[domain=1.93:2.45,variable=\t]({1*9.26*cos(\t r)+0*9.26*sin(\t r)},{0*9.26*cos(\t r)+1*9.26*sin(\t r)});
\draw [shift={(5.83,-8.44)}] plot[domain=0.7:1.31,variable=\t]({1*6.31*cos(\t r)+0*6.31*sin(\t r)},{0*6.31*cos(\t r)+1*6.31*sin(\t r)});
\draw [shift={(6.32,25.58)}] plot[domain=4.6:4.84,variable=\t]({1*32.4*cos(\t r)+0*32.4*sin(\t r)},{0*32.4*cos(\t r)+1*32.4*sin(\t r)});
\draw [shift={(9.59,11.48)}] plot[domain=4.1:4.48,variable=\t]({1*8.52*cos(\t r)+0*8.52*sin(\t r)},{0*8.52*cos(\t r)+1*8.52*sin(\t r)});
\draw [shift={(0.2,-5.1)}] plot[domain=0.23:0.7,variable=\t]({1*15.41*cos(\t r)+0*15.41*sin(\t r)},{0*15.41*cos(\t r)+1*15.41*sin(\t r)});
\draw [shift={(3.92,-0.78)}] plot[domain=0.56:1.02,variable=\t]({1*7.05*cos(\t r)+0*7.05*sin(\t r)},{0*7.05*cos(\t r)+1*7.05*sin(\t r)});
\draw [shift={(1.06,4.02)}] plot[domain=-0.13:0.18,variable=\t]({1*6.65*cos(\t r)+0*6.65*sin(\t r)},{0*6.65*cos(\t r)+1*6.65*sin(\t r)});
\draw [shift={(8.62,4.93)}] plot[domain=3.46:3.99,variable=\t]({1*5.97*cos(\t r)+0*5.97*sin(\t r)},{0*5.97*cos(\t r)+1*5.97*sin(\t r)});
\draw [shift={(5.22,5.29)}] plot[domain=4.61:5.13,variable=\t]({1*4.88*cos(\t r)+0*4.88*sin(\t r)},{0*4.88*cos(\t r)+1*4.88*sin(\t r)});
\draw [shift={(9.34,10.11)}] plot[domain=3.98:4.49,variable=\t]({1*9.5*cos(\t r)+0*9.5*sin(\t r)},{0*9.5*cos(\t r)+1*9.5*sin(\t r)});
\draw [shift={(12.13,6.92)}] plot[domain=3.46:4.03,variable=\t]({1*7.81*cos(\t r)+0*7.81*sin(\t r)},{0*7.81*cos(\t r)+1*7.81*sin(\t r)});
\begin{scriptsize}
\fill [color=black] (3.52,-5.14) circle (5pt);
\fill [color=black] (10.64,-4.36) circle (5pt);
\fill [color=black] (11.76,-0.74) circle (5pt);
\fill [color=black] (7.22,0.84) circle (5pt);
\fill [color=black] (4.7,0.44) circle (5pt);
\fill [color=black] (7.44,-2.34) circle (5pt);
\fill [color=black] (10.62,0.82) circle (5pt);
\fill [color=black] (9.9,2.96) circle (5pt);
\fill [color=black] (6.88,4.58) circle (5pt);
\fill [color=black] (2.96,3.06) circle (5pt);
\fill [color=black] (2.7,-1.62) circle (5pt);
\fill [color=black] (1.58,1.08) circle (5pt);
\fill [color=black] (7.66,3.18) circle (5pt);
\fill [color=black] (4.7,4.5) circle (5pt);
\fill [color=black] (2.62,-6.6) circle (5pt);
\fill [color=black] (8.84,-5.5) circle (5pt);
\fill [color=black] (12.58,-3.94) circle (5pt);
\fill [color=black] (13.42,-1.9) circle (5pt);
\fill [color=black] (12.18,0.48) circle (5pt);
\fill [color=black] (11.04,4.7) circle (5pt);
\fill [color=black] (7.6,5.24) circle (5pt);
\fill [color=black] (4.1,5.88) circle (5pt);
\fill [color=qqqqff] (-0.18,5.6) circle (1.5pt);
\fill [color=qqqqff] (0.08,-1.22) circle (1.5pt);
\fill [color=qqqqff] (-1.86,5.2) circle (1.5pt);
\fill [color=qqqqff] (-3.72,8.7) circle (1.5pt);
\fill [color=black] (13.1,0.84) circle (5pt);
\fill [color=black] (11.94,4.88) circle (5pt);
\fill [color=black] (10.36,-6.56) circle (5pt);
\fill [color=qqqqff] (14.78,-5.7) circle (1.5pt);
\fill [color=qqqqff] (15.22,-1.66) circle (1.5pt);
\fill [color=qqqqff] (14.4,5.9) circle (1.5pt);
\fill [color=qqqqff] (7.84,8.88) circle (1.5pt);
\fill [color=black] (9,6.18) circle (5pt);
\fill [color=qqqqff] (6.5,8.88) circle (1.5pt);
\fill [color=qqqqff] (-4.98,-8.3) circle (1.5pt);
\fill [color=qqqqff] (1.98,-9.04) circle (1.5pt);
\fill [color=qqqqff] (12.16,-7.26) circle (1.5pt);
\fill [color=qqqqff] (15.92,-7.82) circle (1.5pt);
\fill [color=qqqqff] (-1.76,-2.94) circle (1.5pt);
\fill [color=qqqqff] (8.98,9.52) circle (1.5pt);
\fill [color=qqqqff] (9.74,11) circle (1.5pt);
\end{scriptsize}
\end{tikzpicture}
\end{minipage}
	
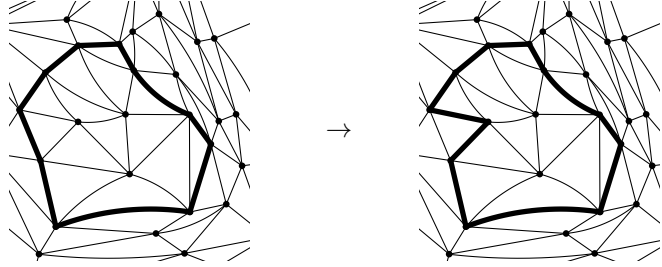
\captionof{figure}{Step type 2}
	\label{curvecontraction2}
\end{center}

2. Take a triangle $v_iv_jv_k$ in $int(c)\cup c$, which has exactly one side in common with $c$ and its third vertex is in $int(c)$. We can assume without loss of generality that this side is $(v_i,v_k)$. Throw this side out of $c$ and replace it with $(v_i,v_j)$ and $(v_j,v_k)$.
\end{lemma}

\begin{proof}
It is enough to prove that at least one of these steps always can be performed if $int(c)\cup c$ is not a triangle, since this always reduces the number of triangles contained in $int(c)\cup c$ by exactly $1$, so in the end, $int(c)\cup c$ will be a triangle.

Since $G$ is a planar graph, there are at most $n-3$ diagonals of $c$ in $G_i(c)$. Thus there exists a vertex $v$ (actually at least $2$ vertices) with no diagonals of $c$ belonging to $G_i(c)$ starting from $v$. This means that if $\left(v,v'\right)$ is an edge of $c$, then the triangle in $int(c)\cup c$ bordering $\left(v,v'\right)$ satisfies the conditions for one of the steps: its third vertex $v''$ is either the other neighbour of $v$ in $c$ or $v''$ is in $int(c)$, otherwise $\left(v,v''\right)$ would be a diagonal of $c$ as the edge connecting them is clearly in $G_i(c)$. And the first possibility means that we can perform step type 1, while the second possibility means that we can perform step type 2.
\end{proof}

\begin{lemma}
\label{lem:curvature}
The curvature of a cycle $c$ is equal to $6$ minus the number of irregular vertices in $int(c)\cup c$ (counted with multiplicity).
\end{lemma}

\begin{proof}
By using Lemma \ref{lem:curvecontraction}, we can transform $c$ into a $3$-cycle, for which $c\cap int(c)$ is a triangle using the two kind of steps there.

Now examine the change of the curvature of $c$, while performing these steps.

If we perform step type 1 and $v_j$ is a regular vertex, the curvature of $c$ does not change as $deg_{G_{ext(c)}}\left(v_i\right)-2$ and $deg_{G_{ext(c)}}\left(v_k\right)-2$ increase by $1$ each, while $deg_{G_{ext(c)}}(v_j)-2$, which was $4-2=2$ before, gets out of the sum.

If we perform step type 1 and $v_j$ is an irregular vertex with multiplicity $m$, then the curvature of $c$ increases by $m$ since $deg_{G_{ext(c)}}\left(v_i\right)-2$ and $deg_{G_{ext(c)}}\left(v_k\right)-2$ increase by $1$ each, while $deg_{G_{ext(c)}}\left(v_j\right)-2$, which was $2-m$ before, gets out of the sum.

If we perform step type 2, the curvature of $c$ does not change, since \\$deg_{G_{ext(c)}}\left(v_i\right)-2$ and $deg_{G_ext(c)}\left(v_j\right)$ increase by $1$ each and the new summand $deg_{G_ext(c)}\left(v_j\right)-2$ starts as $0-2=-2$.

So since in the end, no vertices remain in $int(c)$, the curvature of $c$ has increased by the number of irregular vertices originally in $int(c)$ counted with multiplicity. And the curvature of a triangle is trivially $6$ plus the number of irregular vertices (counted with multiplicity) among its three vertices, so the original $c$ has a curvature of $6$ minus the number of irregular vertices in $int(c)\cup c$ (counted with multiplicity).
\end{proof}

\begin{center}
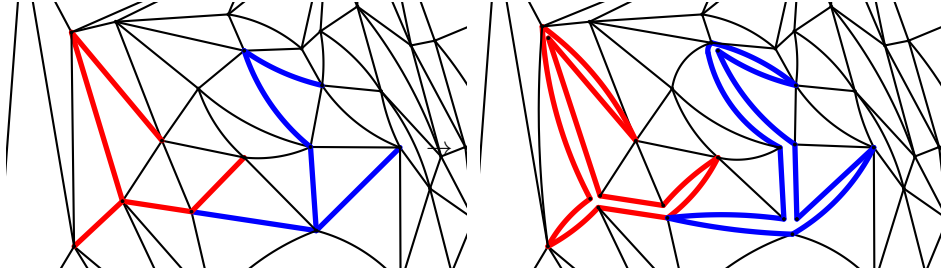

\begin{minipage}{.45\textwidth}
	\centering
	\input{Figures/Gt1t2a}
\end{minipage}
\begin{minipage}{.05\textwidth}
\centering
$\rightarrow$
\end{minipage}
\begin{minipage}{.45\textwidth}
	\centering
	\input{Figures/Gt1t2c}
\end{minipage}
	\captionof{figure}{On the left, a section of $G$ is visible with $\teegy$ highlighted in red and $\teketto$ highlighted in blue, while on the right, the corresponding section of $G\left(\teegy,\teketto\right)$ with $\teegyvesszo$ highlighted in red and $\tekettovesszo$ highlighted in blue.}
	\label{fig:Gt1t2}
\end{center}

\begin{definition}
Take two trees $\teegy$ and $\teketto$ in $G$ such that they do not cross each other, but can have common vertices. Now define $G\left(\teegy,\teketto\right)$ in the following way: take a closed walk around $\teegy$ such that after reaching a vertex $v$ following an edge $e$, always take the closest exit to the right from $e$ as the next step (see Figure \ref{fig:Gt1t2}). We stop immediately before taking the first edge of the walk in the same direction. It is obvious that if the original tree had $k$ vertices and $k-1$ edges, this closed walk has length $2k-2$: all edges are used in both directions. Now move the edges very slightly outwards (creating two edges for all edges of $G$)\footnote{The meaning of "slightly" is analogous to how it was defined in \ref{sec:converting} in the sense that we only require the new edges not to cross or touch each other (at least in their interiors), the distance of their endpoints remains smaller than $d_1$, for all points of the edges and the minimum distance from the respective endpoints remains smaller than $d_2$. We again could say that a virtual movement of the edges is satisfactory.} as seen in Figure \ref{fig:Gt1t2}, which also divides each vertex into as many copies as the degree of said vertex. Call this new cycle $\teegyvesszo$. Also, we redraw the edges starting from the vertices of $\teegy$ that are not edges of $\teegy$ (but their other endpoint might also be part of $\teegy$) such that the triangles and their orientations remain intact and no crossings are made. Now do the same for $\teketto$ (call the arising cycle $\tekettovesszo$). These changes can also be seen in Figure \ref{fig:Gt1t2}). Now define $V\left(G\left(\teegy,\teketto\right)\right)$, $E\left(G\left(\teegy,\teketto\right)\right)$, $\Delta\left(G\left(\teegy,\teketto\right)\right)$, $E_o\left(G\left(\teegy,\teketto\right)\right)$ and $\Delta_o\left(G\left(\teegy,\teketto\right)\right)$ analogously to as we defined them for $G$. Note that $V\left(G\left(\teegy,\teketto\right)\right)$, $E_o\left(G\left(\teegy,\teketto\right)\right)$ and $\Delta_o\left(G\left(\teegy,\teketto\right)\right)$ are also disjoint, but they only cover $S\setminus\left(int\left(\teegyvesszo\right)\cup int\left(\tekettovesszo\right)\right)$.
\end{definition}

\begin{definition}
Now we also can define the local curvature for vertices within cycles within $G\left(\teegy,\teketto\right)$ as $$lc\left(\vtx_i\left(c\right),c\right)=\left\lvert\left\lbrace\delta\in\Delta_o\left(G\left(\teegy,\teketto\right)\right)\vert\vtx_i\left(c\right)\in Cl\delta\right\rbrace\right\rvert-3$$ and the curvature of cycles within $G\left(\teegy,\teketto\right)$ as $$\curv_{G\left(\teegy,\teketto\right)}(c)=\sum\limits_{i=1}^{\lvert E(c)\rvert}{lc\left(\vtx_i\left(c\right)\right)}.$$
\end{definition}

\begin{lemma}
\label{lem:treecurvature}
If $\teegy$ and $\teketto$ are two trees in $G$ containing all irregular vertices and $c$ is a cycle in $G\left(\teegy,\teketto\right)$ for which $int\left(\teegyvesszo\right)\subseteq int(c)$ and $int\left(\tekettovesszo\right)\subseteq ext(c)$, then $\curv_{G\left(\teegy,\teketto\right)}(c)=6-\left\lvert V\left(\teegyvesszo\right)\cap I\right\rvert$.
\end{lemma}

\begin{proof}
For any vertex $v$ of $\teegy$ with degree $k$ within $\teegy$, the sum of the local curvatures of the vertices of $G\left(\teegy,\teketto\right)$ corresponding to $v$ is exactly $6-3k$ if $v$ is regular and $6-m-3k$ if it is irregular with multiplicity $m$. Thus, $\curv_{G\left(\teegy,\teketto\right)}\left(\teegy\right)=6\cdot\left\lvert V\left(\teegy\right)\right\rvert-6\cdot\left\lvert E\left(\teegy\right)\right\rvert-M$, if $M$ is the sum of the multiplicities of the irregular vertices belonging to $\teegy$.

For an arbitrary cycle fulfilling the condition of the lemma, the proof works similarly as for Lemma \ref{lem:curvature}, where we used Lemma \ref{lem:curvecontraction}.
\end{proof}

Now define a graph $H$ as follows (see Figure \ref{3szogracs}). Its vertices are $i_1,i_2,...,i_{12}$ and they are regarded as distinct vertices even if they are not distinct in $G$. Two vertices are connected if they have graph distance at most $3$ in $G$ (this obviously includes pairs of coinciding (non-distinct) irregular vertices).

\begin{center}
\begin{minipage}{.41\textwidth}
	\centering
	\input{Figures/3szogracs1}
\end{minipage}
\begin{minipage}{.16\textwidth}
	\vskip3cm
	\centering
	\usetikzlibrary{arrows}
\definecolor{ffzztt}{rgb}{1,0.6,0.2}
\definecolor{zzqqqq}{rgb}{0.6,0,0}
\definecolor{qqzzqq}{rgb}{0,0.6,0}
\definecolor{ffqqff}{rgb}{1,0,1}
\definecolor{qqqqff}{rgb}{0,0,1}
\definecolor{ffqqqq}{rgb}{1,0,0}
\begin{tikzpicture}[line cap=round,line join=round]
\clip(-0.6,-0.5) rectangle (1.25,2);
\draw [line width=2pt,color=ffqqqq] (0,0)-- (0,1);
\draw [color=qqqqff] (0,1)-- (-0.5,1.87);
\draw [line width=2pt,color=ffqqff] (-0.5,1.87)-- (0.5,1.87);
\draw [line width=2pt,color=qqzzqq] (0.5,1.87)-- (0,1);
\draw [line width=2pt,color=zzqqqq] (1,0)-- (1,1);
\draw [line width=2pt,color=ffzztt] (1,-1)-- (1,0);
\begin{scriptsize}
\fill [color=black] (0,0) circle (1.5pt);
\draw[color=black] (0.21,0.03) node {$i_4$};
\fill [color=black] (0,1) circle (1.5pt);
\draw[color=black] (0.21,1.03) node {$i_2$};
\fill [color=black] (-0.5,1.87) circle (1.5pt);
\draw[color=black] (-0.24,1.74) node {$i_1$};
\fill [color=black] (0.5,1.87) circle (1.5pt);
\draw[color=black] (0.65,1.74) node {$i_3$};
\fill [color=black] (1,1) circle (1.5pt);
\draw[color=black] (1,1.18) node {$i_5$};
\fill [color=black] (1,0) circle (1.5pt);
\draw[color=black] (0.8,0.03) node {$i_6$};
\fill [color=black] (1,-1) circle (1.5pt);
\end{scriptsize}
\end{tikzpicture}
\end{minipage}
\begin{minipage}{.41\textwidth}
	\input{Figures/3szogracs3}
\end{minipage}
\captionof{figure}{A part of $G$ with the chosen paths connecting the vertices of $I$ highlighted (left), the part of $H$ corresponding to this part of $G$ (middle) and the corresponding part of $G$ with $\teegy$ and $\teketto$ denoted by bold (right).}
\label{3szogracs}
\end{center}

Now we have two cases:

\vskip0.5cm

{\bf Case 1:}

$H$ either has one connected component or it has two, both having a cardinality of $6$. Call these subcases Case 1a and Case 1b, respectively.

\begin{lemma}
If Case 1a holds, then there exist two subtrees $\teegy$ and $\teketto$ of $G$ that meet in exactly one vertex, they do not cross each other, they together cover all the vertices from $I$ and $max\left(E\left(\teegy\right),E\left(\teketto\right)\right)\le22$.
\end{lemma}

\begin{proof}
Take the smallest subtree $t_0$ of $G$ that contains all the elements of $I$. Suppose that $\left\lvert E\left(t_0\right)\right\rvert>33$. In this case, we could take a spanning tree $t\left(H\right)$ in $H$ and take a shortest path in $G$ between all elements of $I$ that are adjacent in $t\left(H\right)$ (for the irregular vertices of $G$ with multiplicity more than $1$, this path has length $0$, but in all cases it has length at most $3$). The union $U_H$ of these paths is a connected subgraph of $G$ containing all the vertices from $I$ and its edge-count is at most $33$, so if we take an arbitrary spanning tree in $U_H$, it will be shorter than $t_0$, a contradiction. So $\left\lvert E\left(t_0\right)\right\rvert\le33$. Now we will prove that there exist two subtrees of $t_0$ that fulfill the criteria of the lemma. Take all divisions of $t_0$ into two trees such that they cover all of $t_0$, but only meet in one vertex, and choose one in which the maximum of the edge-counts of the two trees is minimal: these will be $\teegy$ and $\teketto$. It is obvious that together they cover all vertices from $I$, so we only have to prove that $max\left(E\left(\teegy\right),E\left(\teketto\right)\right)\le22$. Now let the joint vertex of $\teegy$ and $\teketto$ be $v$. $v$ divides $t_0$ into at most $6$ subtrees (in fact, less as if $v$ would be a regular vertex of $G$ with at least $5$ of its vertices in $t_0$, then $t_0$ could be easily redrawn so that its vertex set would be $V\left(t_0\right)\setminus\left\lbrace v\right\rbrace$, which would contradict the choice of $t_0$), let the set of these trees be $F$. Also, suppose that one tree from $F$ has more than $17$ edges. Then this tree must be one of $\teegy$ and $\teketto$, we can assume without loss of generality that it is $\teegy$. Now we can modify our choice of $\teegy$ and $\teketto$ by removing one edge from $\teegy$ (the only edge of $\teegy$ starting from $v$) and adding it to $\teketto$, and this reduces $max\left(E\left(\teegy\right),E\left(\teketto\right)\right)$, a contradiction. So all of the trees from $F$ have at most $17$ edges. Now take the union $F_1$ of some trees from $F$ that are placed consecutively around $v$ such that their total edge-length does not exceed $17$ and it is maximal in terms of expansion among the subsets of $F$ with these properties. Now create a union $F_2$ with the same properties from $F\setminus F_1$, starting with the counterclockwise next tree after those in $F$, but now we only require $F_2$ to be maximal in the above-described sense within $F\setminus F_1$. Now denote the union of the remaining trees from $F$ by $F_3$. Per definition, $\left\lvert E\left(F_1\right)\cup E\left(F_2\right)\right\rvert\ge18$, so all of $F_1$, $F_2$ and $F_3$ have a maximum edge-count of $17$ and since $ E\left(F_1\right)\cup E\left(F_2\right)\cup E\left(F_3\right)=E(F)$, at least one of them has an edge-count of at least $11$, meaning that in case this $F_i$ is $\teegy$, while the union of the other two $F_i$'s is $\teketto$, the statement holds (it is also possible that the real $\teegy$ and $\teketto$ have an even smaller $max\left(E\left(\teegy\right),E\left(\teketto\right)\right)$).
\end{proof}

\begin{lemma}
If Case 1b holds, then there exist two disjoint subtrees $\teegy$ and $\teketto$ of $G$ that cover six vertices of $I$ each (counted with multiplicity) and \\$max\left(E\left(\teegy\right),E\left(\teketto\right)\right)\le15$.
\end{lemma}

\begin{proof}
Denote the two connected components of $H$ by $H_1$ and $H_2$. Also, take a spanning tree $t\left(H_1\right)$ for $H_1$, then choose an arbitrary shortest path in $G$ for each edge of this tree. Now take a spanning tree of the union of these paths and call the resulting graph $H_1'$. Do the same procedure for $H_2$ and call the resulting graph $H_2'$. Note that per definition, all vertices of $H_1'$ have a graph distance of at most one from at least one vertex of $H_1$, meaning that $H_1'$ and $H_2'$ cannot have any common vertices, as that would mean that some vertex of $H_1$ has graph distance (with respect to $G$) at most two from some vertex of $H_2$, and thus, it would contradict the definition of $H_1$ and $H_2$. And since $H_1'$ is a subgraph of the union of $5$ paths all of length at most $3$, it has at most $15$ edges and the same applies for $H_2'$.
\end{proof}

Now we will use the following lemma:

\begin{center}
	\input{Figures/Case2b3}
	\captionof{figure}{}
	\label{Case2b3}
\end{center}

\begin{lemma}
\label{lem:c2c3}
There exists a series of cycles $c_{1,0},...,c_{1,p}$ for some $p$) in $G\left(\teegy,\teketto\right)$ with $c_{1,0}=\teegyvesszo$ and $c_{1,p}=\tekettovesszo$ satisfying $3$ conditions:

1) All of them have graph length at most $44$, and thus, broken line length less than $44d_1$.

2) For any $i,j$ with $\left\lvert i-j\right\rvert=1$ and any vertex $v$ of $c_{1,i}$, there is a vertex of $c_{1,j}$ neighbouring $v$ in $G\left(\teegy,\teketto\right)$ and thus, having spherical distance less than $d_1$ from it.

3) For any $i,j$ with $\left\lvert i-j\right\rvert=1$ and any edge $e$ of $c_{1,i}$, there is a vertex of $c_{1,j}$ having graph distance at most $1$ from both of the endpoints of $e$ in $G\left(\teegy,\teketto\right)$, and thus, having spherical distance less than $d_1+d_2$ from all the points of $e$.
\end{lemma}
\begin{proof}
We will first describe an algorithm that results in a cycle in $G\left(\teegy,\teketto\right)$ that can be obtained from $\teivesszo$ for both $i=1$ and $i=2$ using the following types of steps (the $j$th iteration of this procedure is called $\teivesszo(j)$, where $\teivesszo=\teivesszo(0)$). We will always use the pre-assumption that if Case 1a holds, all $\teivesszo(j)$ have at least one vertex in common and that if Case 1b holds, the curvature of $\teivesszo(j-1)$ is always $0$, which can be trivially seen from the steps.

1) Choose a $k$ for which $lc\left(\vtx_k\left(\teivesszo(j-1)\right),\teivesszo(j-1)\right)=-2$ and obtain $\teivesszo(j)$ from $\teivesszo(j-1)$ by leaving $\vtx_k\left(\teivesszo(j-1)\right)$ out and instead connect $\vtx_{k-1}\left(\teivesszo(j-1)\right)$ and $\left(\vtx_{k+1}\teivesszo(j-1)\right)$ directly (they are neighbouring in $G\left(\teegy,\teketto\right)$).

2) Choose a $k$ for which $lc\left(\vtx_k\left(\teivesszo(j-1)\right),\teivesszo(j-1)\right)=-1$ and obtain $\teivesszo(j)$ from $\teivesszo(j-1)$ by leaving $\vtx_k\teivesszo(j-1)$ out and replace it with the other common neighbour of $\vtx_{k-1}\left(\teivesszo(j-1)\right)$ and $\vtx_{k+1}\left(\teivesszo(j+1)\right)$.

3) Let $\teivesszo(j)$ be the cycle defined by the neighbours of $\teivesszo(j-1)$ within $ext\left(\teivesszo(j-1)\right)$ (we only use this step if $lc\left(\vtx_k\left(\teivesszo(j-1)\right),\teivesszo(j-1)\right)=0$ for all $k$).

The procedure goes as follows:

If Case 1a holds, we will only use steps type 1 and 2 for $\teegyvesszo$, until it is possible with $int\left(\teegyvesszo(j)\right)$ remaining disjoint from $int\left(\tekettovesszo\right)$, denote the number of steps used by $p_1$. If $\teegyvesszo(p_1)$ and $\tekettovesszo$ are not the same cycle (without regard to their direction), then we will use steps type 1 and 2 for $\tekettovesszo$, until it is possible with $int\left(\tekettovesszo(j)\right)$ remaining disjoint from $int\left(\teegyvesszo\right)$, denote the number of steps used by $p_2$.

Now suppose that $\teegyvesszo\left(p_1\right)$ and $\tekettovesszo\left(p_2\right)$ are different (when regarded as non-directed cycles). This means that $\theta_1\left(p_1\right)$ has a non-negative local curvature in all vertices from $\teegyvesszo\left(p_1\right)\setminus\tekettovesszo\left(p_2\right)$ and so does $\tekettovesszo\left(p_2\right)$ in all vertices from $\left(\tekettovesszo\left(p_2\right)\setminus\teegyvesszo\left(p_1\right)\right)$, otherwise we could have used a step type 1 or 2 for one of the two cycles.

Also note that if for some $k$, both $\vtx_k\left(\teegyvesszo\left(p_1\right)\right)$ and the edges $\left(\vtx_{k-1}\left(\teegyvesszo\left(p_1\right)\right),\right.$\\$\left.\vtx_k\left(\teegyvesszo\left(p_1\right)\right)\right)$ and $\left(\vtx_k\left(\teegyvesszo\left(p_1\right)\right),\vtx_{k+1}\left(\teegyvesszo\left(p_1\right)\right)\right)$ are part of $\tekettovesszo\left(p_2\right)$ (besides being part of $\teegyvesszo\left(p_1\right)$), then $$lc\left(\vtx_k\left(\teegyvesszo\left(p_1\right)\right),\teegyvesszo\left(p_1\right)\right)+lc\left(\vtx_k\left(\teegyvesszo\left(p_1\right)\right),\tekettovesszo\left(p_2\right)\right)=0,$$ while if $\vtx_k\left(\teegyvesszo\left(p_1\right)\right)$ is part of $\tekettovesszo\left(p_2\right)$, but at least one of the edges $\left(\vtx_{k-1}\left(\teegyvesszo\left(p_1\right)\right),\right.$\\$\left.\vtx_k\left(\teegyvesszo\left(p_1\right)\right)\right)$ and $\left(\vtx_k\left(\teegyvesszo\left(p_1\right)\right),\vtx_{k+1}\left(\teegyvesszo\left(p_1\right)\right)\right)$ is not (and such a $k$ obviously exists as $\teegyvesszo\left(p_1\right)$ and $\tekettovesszo\left(p_2\right)$ coincide in some vertices but not in all of them), then $$lc\left(\vtx_k\left(\teegyvesszo\left(p_1\right)\right),\teegyvesszo\left(p_1\right)\right)+lc\left(\vtx_k\left(\teegyvesszo\left(p_1\right)\right),\tekettovesszo\left(p_2\right)\right)>0.$$ But from Lemma \ref{lem:treecurvature}, $\curv\left(\teegyvesszo\left(p_1\right)\right)+\curv\left(\tekettovesszo\left(p_2\right)\right)=0$, which is a contradiction, since above we have just shown that by summing up the local curvatures on the vertices of $\teegyvesszo\left(p_1\right)$ and $\tekettovesszo\left(p_2\right)$, we get a positive number.

If Case 1b holds, we will do a similar procedure with the difference that if we are completely stuck with using steps type 1 and 2 even for $\tekettovesszo$, then we may use step type 3 for the last $\tekettovesszo(j)$. Now suppose that we are stuck with that too, but $\teegyvesszo\left(p_1\right)$ and $\tekettovesszo\left(p_2\right)$ do not fully coincide (again when we regard them as non-directed cycles). If they have at least one joint vertex, then this is a contradiction exactly as written above, but if they do not have, then this must mean $lc\left(\vtx_k\left(\tekettovesszo\left(p_2\right))\right),\tekettovesszo(\left(p_2\right)\right)\ge0$ for all $k$ (otherwise either step 1 or step 2 could be performed for $\tekettovesszo\left(p_2\right)$, but since $\curv\left(\tekettovesszo\left(p_2\right)\right)=0$ (from Lemma \ref{lem:treecurvature}, the above means $lc\left(\vtx_k\left(\teivesszo\left(p_2\right)\right),\teivesszo\left(p_2\right)\right)=0$ for all $k$. Thus, we can apply step type 3, a contradiction.

Now define $p$ as $p_1+p_2$, define $c_{1,j}$ for $0\le j\le p_1$ as $\teegyvesszo(j)$ and define $c_{1,j}$ for $p_1\le p$ as $\tekettovesszo(p-j)$, but in the opposite direction.

It is straightforward from the construction that the above set of cycles fulfill the second and the third condition of the lemma, while the first condition follows from the fact that $\teivesszo(j)$ is shorter than $\teivesszo(j-1)$ for $i=1$ and $i=2$ and $j=1,2,...,p_1$ and $j=1,2,...,p_2$, respectively, which again is straightforward from the construction.
\end{proof}

Note that the above proof is empirically based on the fact that $G\left(\teegy,\teketto\right)$ can be mapped (with a not necessarily bijective mapping) into $G_t(z)$ for some appropriate $z$ with preserving edges, triangles and their orientations.

Now define a function $g(i)=dist'_S\left(-\left(V\left(c_{1,i}\right)\right),c_{1,i}\right)$. The sign of this function depends on whether all of $-\left(V\left(c_{1,i}\right)\right)$ is within $int\left(c_{1,i}\right)$, all of it is within $ext\left(c_{1,i}\right)$ or otherwise. Thus, $g(0)<0$ and $g(p)>0$.

Now take the first $i$, for which $g(i)$ is positive. From Lemma \ref{lem:2pointscycle} and Lemmas \ref{lem:point2cycles}, we can see that either $g(i-1)\le-d_1+\frac{d_2}{2}$ or $g(i)\ge d_1+\frac{d_2}{2}$. Thus, at least one of $c_{1,i-1}$ and $c_{1,i}$ contains two points, whose distance is at least $r\pi-d_1-\frac{d_2}{2}$, while from Lemma \ref{lem:length}, their distance could be less than $22d_1$. Thus, since $r\ge\frac{23d_1+0.5d_2}{\pi}$, this is a contradiction.

{\bf Case 2:} 

Case 1 does not hold, meaning that $H$ either has more than two components or only two, but they do not have a vertex number divisible by $6$.

\begin{lemma}
\label{lem:case3a}
If Case 2 holds, then there exists a cycle $c_2$ in $G$, all of whose vertices have graph distance at least $2$ from all the irregular vertices and which separates them into two groups so that both of the groups has a cardinality not divisible by $6$ (counted with multiplicity).
\end{lemma}

\begin{proof}
\begin{center}
\begin{minipage}{.4\textwidth}
	\centering
	\input{Figures/Case3c}
\end{minipage}
$\rightarrow$
\begin{minipage}{.4\textwidth}
	\centering
	\input{Figures/Case3c2}
\end{minipage}
\captionof{figure}{A part of $G$ (left) and the same part of $G'$ (right)}
\label{Case3c}
\end{center}

Take the graph $G'$ which we get from $G$ by deleting all the vertices that have graph distance at most $1$ (with respect to $G$) from any of the points of $I$ and all the edges that are incident to the deleted vertices.

\begin{lemma}
The connected components of $S\setminus G'$ are exactly those of the following two types:

1) Any open triangle with all vertices having graph distance at least $2$ in $G$ from all points of $I$.

2) For a component $H_a$ of $H$, take the union of the vertices of $G$ with graph distance at most $1$ in $G$ from any of the vertices of $H_a$ and the open edges and open triangles incident to these vertices.
\end{lemma}

\begin{proof}
If a set $C$ is of type 1, then it is is fully inside $S\setminus G$ and since $S\setminus G'\supseteq S\setminus G$, it is fully inside $S\setminus G'$ too. And $C$ is connected, so its points are in the same connected component. And since $\partial{C}\subseteq G'$, $C$ cannot be connected with any other point inside $S\setminus G'$.

If a set $C_a$ is of type 2 (belonging to $H_a$), then all the vertices and open edges inside it are part of $S\setminus G'$ per definition of $G'$ and it is straightforward that they are in one connected component. Also, per definition, no vertices of $C_a$ are on its own border, and also per definition, all open edges of $G$ outside $G'$ have at least one endpoint outside $G'$. So if $C_a$ borders anything from $S\setminus(G'\cup C_a)$, it also borders a vertex from this set. And per definition this vertex has graph distance at most $1$ in $G$ from a vertex of $H\setminus H_a$, which, from the definition of $C_a$, leads to a contradiction with the fact that all vertices of $H$ from different components have graph distance at least $4$ in $G$.

And since all points of $S\setminus G'$ belong to at least one set of type $1$ or type $2$, this proves that indeed, these are the only connected components of $S\setminus G'$.
\end{proof}

Now suppose that $H_a$ is a connected component of $H$ whose vertex count is different from $6$ and let $C_a$ be the connected component of $S\setminus G'$ belonging to $H_a$. Now take the connected components of $S\setminus Cl{C_a}$. Obviously, the number of irregular vertices (counted by multiplicity) contained in them is not divisible by $6$ for all of them, since they contain $12-\left\lvert V(H_a)\right\rvert$ irregular vertices in total (counted by multiplicity), and this number cannot be obtained as the sum of numbers divisible by $6$. And the boundary of each such component is a cycle in $G$, finishing the proof of Lemma \ref{lem:case3a}.
\end{proof}

\begin{definition}
For all applicable $i$, define a function $\psi_i$ from $G_H$ (as labeled in Figure \ref{triangulargrid1bfeliratos2}) to $N_2\left(\vtx_i\left(c_2\right)\right)$ in the following way: let $\psi_i(u)$ be $\vtx_i\left(c_2\right)$, let $\psi_i\left(u_1\right)$ be $\vtx_{i+1}\left(c_2\right)$ (counted modulo $l\left(c_2\right)$) and then let $\psi_i\left(u_2\right),\psi_i\left(u_3\right),...,\psi_i\left(u_6\right)$ be the other neighbours of $\vtx_i\left(c_2\right)$ starting from $\psi_i\left(u_1\right)$ in a positive order. Now for all $j=1,2,...,6$, let $\psi_i\left(u_{j,j+1}\right)$ (with the $j$'s being counted modulo $6$) be the joint neighbour of $\psi_i\left(u_j\right)$ and $\psi_i\left(u_{j+1}\right)$ that is not $\vtx_i\left(c_2\right)$ and let $\psi_i\left(u_{j,j}\right)$ be the neighbour of $\psi_i\left(u_j\right)$ between $\psi_i\left(u_{j-1,j}\right)$ and $\psi_i\left(u_{j,j+1}\right)$ (since both $\vtx_i\left(c_2\right)$ and its neighbours are regular vertices, the above definitions are meaningful).
\end{definition}

It is straightforward to see that $\psi_i\left(G_H\right)$ preserves adjacencies, triangles and the orientation of triangles. Note though that since only the immediate neighbours of $\vtx_i\left(c_2\right)$ are known to be regular, additional adjacencies and even coincidences might occur (see Figure \ref{fig:Case2hexagon6b}), but these can only cause extra restrictions for the colouring, so they do not affect the proof.

\begin{lemma}
\label{lem:case3c}
The curvature of $c_2$ is divisible by $6$.
\end{lemma}

\begin{proof}
From Lemma \ref{lem:Isbell1}, we know that $\sigma\left(\psi_i\left(G_H\right)\right)$ is part of an Isbell colouring $\sigma_I(i)$ for all $i$ ($i=1,...,l\left(c_2\right)$), moreover, since the set $N_2\left(v_i\left(c_2\right)\right)\cap N_2\left(v_{i+1}\left(c_2\right)\right)$ is the image of a $G_H^-$ within $G_H$ both when using the function $\psi_i$ and the function $\psi_{i+1}$ (the indices being counted modulo $l(c)$), this colouring is the same apart from a rotation because of Lemma \ref{lem:Isbell2} due to the fact that $G_h^+$ is a subgraph of $G_H^-$. Now define a function $g:\left\lbrace1,2,...,7\right\rbrace\times\left\lbrace1,2,...,7\right\rbrace\rightarrow\left\lbrace0,1,...,5\right\rbrace$ as follows: $g(i,j)=\frac{3}{\pi}\cdot\angle\left(\vec{uu_1},\vec{vv'}\right)$ where $v$ and $v'$ denote two arbitrary adjacent vertices of $G_H$, whose colours are $i$ and $j$ (this function exists as the ordered pair of colours of two adjacent vertices uniquely determines the corresponding vector in any Isbell colouring). It is straightforward to see that $$lc\left(\vtx_i\left(c_2\right),c_2\right)\equiv g\left(\sigma\left(\vtx_i\left(c_2\right)\right),\sigma\left(\vtx_{i+1}\left(c_2\right)\right)\right)-g\left(\sigma\left(\vtx_{i-1}\left(c_2\right)\right),\sigma\left(\vtx_i\left(c_2\right)\right)\right)\text{\,(mod\,6)}$$ 
(where the indices are counted modulo $l(c)$). Thus, 
\begin{align*}
\curv\left(c_2\right)&\equiv\sum\limits_{i=0}^{i=l\left(c_2\right)-1}{g\left(\sigma\left(\vtx_i\left(c_2\right)\right),\sigma\left(v_{i+1}\left(c_2\right)\right)\right)-g\left(\sigma\left(\vtx_{i-1}\left(c_2\right)\right),\sigma\left(v_i\left(c_2\right)\right)\right)}\\
&=0\pmod6.
\end{align*}
\end{proof}

\begin{center}
\begin{minipage}{.28\textwidth}
	\centering
	\usetikzlibrary{arrows}
\begin{tikzpicture}[line cap=round,line join=round,scale=0.75]
\clip(-2.25,-2) rectangle (1.35,2.12);
\draw (-1,-1.73)-- (0,-1.73);
\draw (0,1.73)-- (-1,1.73);
\draw (-1,1.73)-- (-1.5,0.87);
\draw (-1.5,0.87)-- (-2,0);
\draw (-2,0)-- (-1.5,-0.87);
\draw (-1.5,-0.87)-- (-1,-1.73);
\draw (0.5,0.87)-- (0,1.73);
\draw (0,1.73)-- (-0.5,0.87);
\draw (-0.5,0.87)-- (-1,1.73);
\draw (-1.5,0.87)-- (-0.5,0.87);
\draw (-1,0)-- (-0.5,0.87);
\draw (-2,0)-- (-1,0);
\draw (-1,0)-- (-1.5,0.87);
\draw (-1.5,-0.87)-- (-1,0);
\draw (-1,0)-- (-0.5,-0.87);
\draw (-0.5,-0.87)-- (-1.5,-0.87);
\draw (-1,-1.73)-- (-0.5,-0.87);
\draw (-0.5,-0.87)-- (0,-1.73);
\draw (0,-1.73)-- (0.5,-0.87);
\draw (0.5,-0.87)-- (-0.5,-0.87);
\draw (0.5,-0.87)-- (1,0);
\draw (0.5,-0.87)-- (0,0);
\draw (1,0)-- (0,0);
\draw (-0.5,-0.87)-- (0,0);
\draw (-1,0)-- (0,0);
\draw (0,0)-- (-0.5,0.87);
\draw (-0.5,0.87)-- (0.5,0.87);
\draw (0,0)-- (0.5,0.87);
\draw (1,0)-- (0.5,0.87);
\begin{scriptsize}
\fill [color=black] (0,0) circle (1.5pt);
\fill [color=black] (1,0) circle (1.5pt);
\fill [color=black] (0.5,0.87) circle (1.5pt);
\fill [color=black] (-0.5,0.87) circle (1.5pt);
\fill [color=black] (-1,0) circle (1.5pt);
\fill [color=black] (-0.5,-0.87) circle (1.5pt);
\fill [color=black] (0.5,-0.87) circle (1.5pt);
\fill [color=black] (0,1.73) circle (1.5pt);
\fill [color=black] (-1,1.73) circle (1.5pt);
\fill [color=black] (-1.5,0.87) circle (1.5pt);
\fill [color=black] (-2,0) circle (1.5pt);
\fill [color=black] (-1.5,-0.87) circle (1.5pt);
\fill [color=black] (-1,-1.73) circle (1.5pt);
\fill [color=black] (0,-1.73) circle (1.5pt);
\end{scriptsize}
\end{tikzpicture}
	\captionof{figure}{$G_H^-$}
	\label{fig:triangulargrid3}
\end{minipage}
\begin{minipage}{.68\textwidth}
	\centering
	\usetikzlibrary{arrows}
\begin{tikzpicture}[line cap=round,line join=round]
\clip(-3,-2.25) rectangle (3,2.35);
\draw (-1,-1.73)-- (0,-1.73);
\draw (0,-1.73)-- (1,-1.73);
\draw (1,-1.73)-- (1.5,-0.87);
\draw (1.5,-0.87)-- (2,0);
\draw (2,0)-- (1.5,0.87);
\draw (1.5,0.87)-- (1,1.73);
\draw (1,1.73)-- (0,1.73);
\draw (0,1.73)-- (-1,1.73);
\draw (-1,1.73)-- (-1.5,0.87);
\draw (-1.5,0.87)-- (-2,0);
\draw (-2,0)-- (-1.5,-0.87);
\draw (-1.5,-0.87)-- (-1,-1.73);
\draw (1,-1.73)-- (0.5,-0.87);
\draw (0.5,-0.87)-- (1.5,-0.87);
\draw (1.5,-0.87)-- (1,0);
\draw (1,0)-- (2,0);
\draw (1,0)-- (1.5,0.87);
\draw (1.5,0.87)-- (0.5,0.87);
\draw (0.5,0.87)-- (1,1.73);
\draw (0.5,0.87)-- (0,1.73);
\draw (0,1.73)-- (-0.5,0.87);
\draw (-0.5,0.87)-- (-1,1.73);
\draw (-1.5,0.87)-- (-0.5,0.87);
\draw (-1,0)-- (-0.5,0.87);
\draw (-2,0)-- (-1,0);
\draw (-1,0)-- (-1.5,0.87);
\draw (-1.5,-0.87)-- (-1,0);
\draw (-1,0)-- (-0.5,-0.87);
\draw (-0.5,-0.87)-- (-1.5,-0.87);
\draw (-1,-1.73)-- (-0.5,-0.87);
\draw (-0.5,-0.87)-- (0,-1.73);
\draw (0,-1.73)-- (0.5,-0.87);
\draw (0.5,-0.87)-- (-0.5,-0.87);
\draw (0.5,-0.87)-- (1,0);
\draw (0.5,-0.87)-- (0,0);
\draw (1,0)-- (0,0);
\draw (-0.5,-0.87)-- (0,0);
\draw (-1,0)-- (0,0);
\draw (0,0)-- (-0.5,0.87);
\draw (-0.5,0.87)-- (0.5,0.87);
\draw (0,0)-- (0.5,0.87);
\draw (1,0)-- (0.5,0.87);
\begin{scriptsize}
\fill [color=black] (0,0) circle (2pt);
\fill [color=black] (1,0) circle (2pt);
\fill [color=black] (0.5,0.87) circle (2pt);
\fill [color=black] (-0.5,0.87) circle (2pt);
\fill [color=black] (-1,0) circle (2pt);
\fill [color=black] (-0.5,-0.87) circle (2pt);
\fill [color=black] (0.5,-0.87) circle (2pt);
\fill [color=black] (1.5,0.87) circle (2pt);
\fill [color=black] (1,1.73) circle (2pt);
\fill [color=black] (0,1.73) circle (2pt);
\fill [color=black] (-1,1.73) circle (2pt);
\fill [color=black] (-1.5,0.87) circle (2pt);
\fill [color=black] (-2,0) circle (2pt);
\fill [color=black] (-1.5,-0.87) circle (2pt);
\fill [color=black] (-1,-1.73) circle (2pt);
\fill [color=black] (0,-1.73) circle (2pt);
\fill [color=black] (1,-1.73) circle (2pt);
\fill [color=black] (1.5,-0.87) circle (2pt);
\fill [color=black] (2,0) circle (2pt);
\draw[color=black] (0,0.2) node {\contour{white}{\large{$u$}}};
\draw[color=black] (1,0.2) node {\contour{white}{\large{$u_1$}}};
\draw[color=black] (0.5,1.07) node {\contour{white}{\large{$u_2$}}};
\draw[color=black] (-0.5,1.07) node {\contour{white}{\large{$u_3$}}};
\draw[color=black] (-1,0.2) node {\contour{white}{\large{$u_4$}}};
\draw[color=black] (-0.5,-0.57) node {\contour{white}{\large{$u_5$}}};
\draw[color=black] (0.5,-0.57) node {\contour{white}{\large{$u_6$}}};
\draw[color=black] (2,0) node [anchor=west] {\large{$u_{1,1}$}};
\draw[color=black] (1.5,0.87) node [anchor=west] {\large{$u_{1,2}$}};
\draw[color=black] (1.5,1.83) node [anchor=south] {\large{$u_{2,2}$}};
\draw[color=black] (0.5,1.83) node [anchor=south] {\large{$u_{2,3}$}};
\draw[color=black] (-1,1.83) node [anchor=south] {\large{$u_{3,3}$}};
\draw[color=black] (-1.5,0.97) node [anchor=east] {\large{$u_{3,4}$}};
\draw[color=black] (-2,0.1) node [anchor=east] {\large{$u_{4,4}$}};
\draw[color=black] (-1.5,-0.77) node [anchor=east] {\large{$u_{4,5}$}};
\draw[color=black] (-1.25,-1.73) node [anchor=north] {\large{$u_{5,5}$}};
\draw[color=black] (0.5,-1.73) node [anchor=north] {\large{$u_{5,6}$}};
\draw[color=black] (1.6,-1.63) node [anchor=north] {\large{$u_{6,6}$}};
\draw[color=black] (1.6,-0.87) node [anchor=west] {\large{$u_{6,1}$}};

\end{scriptsize}
\end{tikzpicture}
	
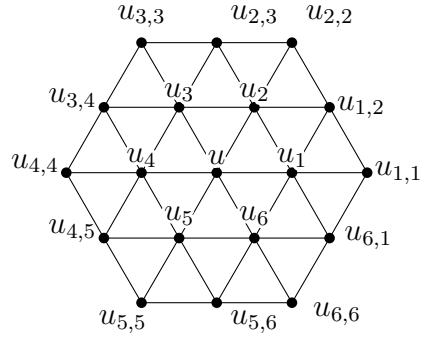
\captionof{figure}{The labeling of $G_H$ we use}
	\label{triangulargrid1bfeliratos2}
\end{minipage}
\end{center}

\begin{center}
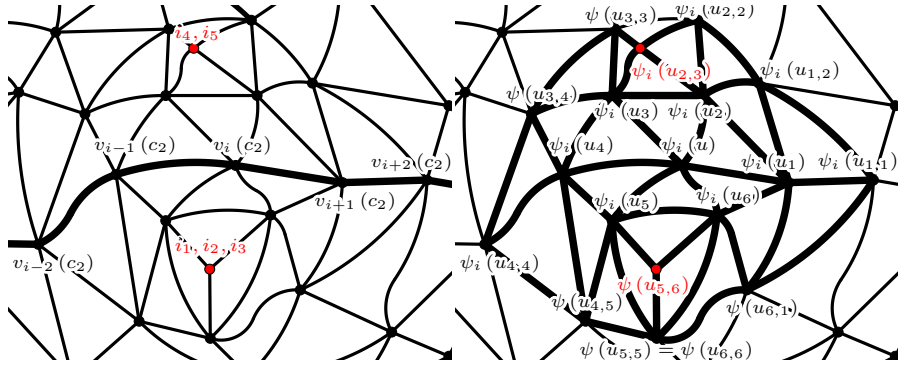

\begin{minipage}{.48\textwidth}
	\centering
	\input{Figures/Case2hexagon5b}
\end{minipage}
\begin{minipage}{.48\textwidth}
	\centering
	\input{Figures/Case2hexagon6b}
\end{minipage}
\captionof{figure}{A portion of $G$ around $\vtx_i\left(c_2\right)$ and the image of $\psi_i$ shown on the same part}
\label{fig:Case2hexagon6b}
\end{center}

So Lemma \ref{lem:case3c} leads to a contradiction as per definition, the number of irregular vertices in $int\left(c_2\right)\cup c_2$ is not divisible by $6$, so according to Lemma \ref{lem:curvature} $curv\left(c_2\right)$ is also not divisible by $6$.

\end{proof}

\section{Concluding remarks}

\begin{center}
\begin{minipage}{.4\textwidth}
	\centering
	\usetikzlibrary{arrows}
\begin{tikzpicture}[line cap=round,line join=round]
\clip(-0.25,-1.4) rectangle (2.9,10.32);
\draw (0,0)-- (0.76,0.65);
\draw (0.76,0.65)-- (1.7,0.33);
\draw (1.7,0.33)-- (2.65,0);
\draw (2.46,0.98)-- (2.65,0);
\draw (2.46,0.98)-- (1.7,0.33);
\draw (1.7,0.33)-- (1.51,1.31);
\draw (1.51,1.31)-- (2.46,0.98);
\draw (1.51,1.31)-- (0.76,0.65);
\draw (0,4.58)-- (0.19,3.6);
\draw (0.19,3.6)-- (0.38,2.62);
\draw (0.38,2.62)-- (0.57,1.64);
\draw (0.57,1.64)-- (0.76,0.65);
\draw (1.51,1.31)-- (0.57,1.64);
\draw (0.57,1.64)-- (1.32,2.29);
\draw (1.32,2.29)-- (1.51,1.31);
\draw (1.51,1.31)-- (2.27,1.96);
\draw (2.27,1.96)-- (2.46,0.98);
\draw (2.27,1.96)-- (1.32,2.29);
\draw (1.32,2.29)-- (0.38,2.62);
\draw (0.38,2.62)-- (1.13,3.27);
\draw (1.13,3.27)-- (1.32,2.29);
\draw (1.32,2.29)-- (2.08,2.95);
\draw (2.08,2.95)-- (2.27,1.96);
\draw (2.08,2.95)-- (1.13,3.27);
\draw (1.13,3.27)-- (0.19,3.6);
\draw (0.19,3.6)-- (0.94,4.26);
\draw (0.94,4.26)-- (0,4.58);
\draw (0.94,4.26)-- (1.13,3.27);
\draw (0.94,4.26)-- (1.89,3.93);
\draw (1.89,3.93)-- (1.13,3.27);
\draw (2.08,2.95)-- (1.89,3.93);
\draw (2.65,4.58)-- (1.89,3.93);
\draw (1.89,3.93)-- (1.7,4.91);
\draw (1.7,4.91)-- (0.94,4.26);
\draw (0.94,4.26)-- (0.76,5.24);
\draw (0.76,5.24)-- (0,4.58);
\draw (0.76,5.24)-- (1.7,4.91);
\draw (1.7,4.91)-- (2.65,4.58);
\draw (2.46,5.56)-- (2.65,4.58);
\draw (2.46,5.56)-- (1.7,4.91);
\draw (1.7,4.91)-- (1.51,5.89);
\draw (1.51,5.89)-- (2.46,5.56);
\draw (1.51,5.89)-- (0.76,5.24);
\draw (0.76,5.24)-- (0.57,6.22);
\draw (0.57,6.22)-- (1.51,5.89);
\draw (2.46,5.56)-- (2.27,6.55);
\draw (2.27,6.55)-- (1.51,5.89);
\draw (1.51,5.89)-- (1.32,6.87);
\draw (1.32,6.87)-- (0.57,6.22);
\draw (0.57,6.22)-- (0.38,7.2);
\draw (0.38,7.2)-- (1.32,6.87);
\draw (1.32,6.87)-- (2.27,6.55);
\draw (2.27,6.55)-- (2.08,7.53);
\draw (2.08,7.53)-- (1.32,6.87);
\draw (1.32,6.87)-- (1.13,7.86);
\draw (1.13,7.86)-- (0.38,7.2);
\draw (0.19,8.18)-- (0.38,7.2);
\draw (0.19,8.18)-- (1.13,7.86);
\draw (1.13,7.86)-- (2.08,7.53);
\draw (2.08,7.53)-- (1.89,8.51);
\draw (1.89,8.51)-- (1.13,7.86);
\draw (1.13,7.86)-- (0.94,8.84);
\draw (0.94,8.84)-- (0.19,8.18);
\draw (0.94,8.84)-- (1.89,8.51);
\draw (2.65,9.17)-- (1.89,8.51);
\draw (0.76,0.65)-- (0,0.92);
\draw (0,1.15)-- (0.57,1.64);
\draw (0.57,1.64)-- (0,1.83);
\draw (0,2.29)-- (0.38,2.62);
\draw (0.38,2.62)-- (0,2.75);
\draw (0,3.44)-- (0.19,3.6);
\draw (0.19,3.6)-- (0,3.67);
\draw (2.65,0.92)-- (2.46,0.98);
\draw (2.46,0.98)-- (2.65,1.15);
\draw (2.65,1.83)-- (2.27,1.96);
\draw (2.27,1.96)-- (2.65,2.29);
\draw (2.65,2.75)-- (2.08,2.95);
\draw (2.08,2.95)-- (2.65,3.44);
\draw (2.65,3.67)-- (1.89,3.93);
\draw (0.19,8.18)-- (0,9.17);
\draw (0,9.17)-- (0.94,8.84);
\draw (0.76,5.24)-- (0,5.5);
\draw (0,5.73)-- (0.57,6.22);
\draw (0.57,6.22)-- (0,6.42);
\draw (0,6.87)-- (0.38,7.2);
\draw (0.38,7.2)-- (0,7.33);
\draw (0,8.02)-- (0.19,8.18);
\draw (2.65,5.5)-- (2.46,5.56);
\draw (2.46,5.56)-- (2.65,5.73);
\draw (2.65,6.42)-- (2.27,6.55);
\draw (2.27,6.55)-- (2.65,6.87);
\draw (2.65,7.33)-- (2.08,7.53);
\draw (2.08,7.53)-- (2.65,8.02);
\draw (2.65,8.25)-- (1.89,8.51);
\draw (0,0)-- (2.65,0);
\draw (2.65,0)-- (2.65,9.17);
\draw (2.65,9.17)-- (0,9.17);
\draw (0,9.17)-- (0,0);
\draw (0,8.25)-- (0.19,8.18);
\draw(1.32,-0.42) circle (0.42cm);
\draw(1.32,9.59) circle (0.42cm);
\draw (1.89,8.51)-- (1.32,9.17);
\draw (1.32,9.17)-- (0.94,8.84);
\draw (1.32,9.17)-- (1.32,10.01);
\draw (1.7,0.33)-- (1.32,0);
\draw (1.32,0)-- (0.76,0.65);
\draw (1.32,0)-- (1.32,-0.84);
\begin{scriptsize}
\fill [color=black] (0,0) circle (1.5pt);
\draw[color=black] (0.09,0.15) node {1};
\fill [color=black] (2.65,0) circle (1.5pt);
\draw[color=black] (2.74,0.15) node {1};
\fill [color=black] (0.76,0.65) circle (1.5pt);
\draw[color=black] (0.85,0.81) node {2};
\fill [color=black] (1.7,0.33) circle (1.5pt);
\draw[color=black] (1.8,0.48) node {4};
\fill [color=black] (2.46,0.98) circle (1.5pt);
\draw[color=black] (2.56,1.13) node {3};
\fill [color=black] (1.51,1.31) circle (1.5pt);
\draw[color=black] (1.61,1.46) node {6};
\fill [color=black] (2.27,1.96) circle (1.5pt);
\draw[color=black] (2.36,2.11) node {7};
\fill [color=black] (0.57,1.64) circle (1.5pt);
\draw[color=black] (0.67,1.79) node {5};
\fill [color=black] (1.32,2.29) circle (1.5pt);
\draw[color=black] (1.41,2.44) node {1};
\fill [color=black] (0.38,2.62) circle (1.5pt);
\draw[color=black] (0.47,2.77) node {4};
\fill [color=black] (1.13,3.27) circle (1.5pt);
\draw[color=black] (1.23,3.43) node {3};
\fill [color=black] (2.08,2.95) circle (1.5pt);
\draw[color=black] (2.17,3.1) node {2};
\fill [color=black] (1.89,3.93) circle (1.5pt);
\draw[color=black] (1.98,4.08) node {5};
\fill [color=black] (0.19,3.6) circle (1.5pt);
\draw[color=black] (0.28,3.76) node {6};
\fill [color=black] (0.94,4.26) circle (1.5pt);
\draw[color=black] (1.04,4.41) node {7};
\fill [color=black] (0,4.58) circle (1.5pt);
\draw[color=black] (0.09,4.74) node {1};
\fill [color=black] (1.7,4.91) circle (1.5pt);
\draw[color=black] (1.8,5.06) node {4};
\fill [color=black] (0.76,5.24) circle (1.5pt);
\draw[color=black] (0.82,5.39) node {$I$};
\fill [color=black] (1.51,5.89) circle (1.5pt);
\draw[color=black] (1.61,6.04) node {6};
\fill [color=black] (2.65,4.58) circle (1.5pt);
\draw[color=black] (2.74,4.74) node {1};
\fill [color=black] (2.46,5.56) circle (1.5pt);
\draw[color=black] (2.56,5.72) node {3};
\fill [color=black] (0.57,6.22) circle (1.5pt);
\draw[color=black] (0.67,6.37) node {5};
\fill [color=black] (2.27,6.55) circle (1.5pt);
\draw[color=black] (2.36,6.7) node {7};
\fill [color=black] (1.32,6.87) circle (1.5pt);
\draw[color=black] (1.41,7.02) node {1};
\fill [color=black] (2.08,7.53) circle (1.5pt);
\draw[color=black] (2.17,7.68) node {2};
\fill [color=black] (0.38,7.2) circle (1.5pt);
\draw[color=black] (0.47,7.35) node {4};
\fill [color=black] (1.13,7.86) circle (1.5pt);
\draw[color=black] (1.23,8) node {3};
\fill [color=black] (1.89,8.51) circle (1.5pt);
\draw[color=black] (1.98,8.66) node {5};
\fill [color=black] (2.65,9.17) circle (1.5pt);
\draw[color=black] (2.74,9.32) node {1};
\fill [color=black] (0.19,8.18) circle (1.5pt);
\draw[color=black] (0.28,8.33) node {6};
\fill [color=black] (0.94,8.84) circle (1.5pt);
\draw[color=black] (1.04,8.99) node {7};
\fill [color=black] (0,9.17) circle (1.5pt);
\draw[color=black] (0.09,9.32) node {1};
\fill [color=black] (1.32,0) circle (1.5pt);
\draw[color=black] (1.41,0.15) node {7};
\fill [color=black] (1.32,9.17) circle (1.5pt);
\draw[color=black] (1.41,9.32) node {4};
\fill [color=black] (1.32,-0.84) circle (1.5pt);
\draw[color=black] (1.41,-0.69) node {1};
\fill [color=black] (1.32,10.01) circle (1.5pt);
\draw[color=black] (1.41,10.16) node {1};
\end{scriptsize}
\end{tikzpicture}
	\captionof{figure}{}
	\label{fig:hosszukasgomb14}
\end{minipage}
\begin{minipage}{.4\textwidth}
	\centering
	\usetikzlibrary{arrows}
\definecolor{qqffqq}{rgb}{0,1,0}
\definecolor{ffzztt}{rgb}{1,0.6,0.2}
\definecolor{ffqqqq}{rgb}{1,0,0}
\definecolor{zzqqzz}{rgb}{0.6,0,0.6}
\definecolor{ffffqq}{rgb}{1,1,0}
\definecolor{zzqqqq}{rgb}{0.6,0,0}
\definecolor{qqqqff}{rgb}{0,0,1}
\begin{tikzpicture}[line cap=round,line join=round]
\clip(-0.25,-1.4) rectangle (2.9,10.32);
\fill[line width=0pt,color=qqqqff,fill=qqqqff,fill opacity=1.0] (0,0) -- (0.53,0) -- (0.57,0.11) -- (0.19,0.55) -- (0,0.51) -- cycle;
\fill[line width=0pt,color=zzqqqq,fill=zzqqqq,fill opacity=1.0] (0.53,0) -- (1.32,0) -- (1.13,0.22) -- (0.57,0.11) -- cycle;
\fill[line width=0pt,color=ffffqq,fill=ffffqq,fill opacity=1.0] (1.32,0) -- (2.12,0) -- (2.27,0.44) -- (1.89,0.87) -- (1.32,0.76) -- (1.13,0.22) -- cycle;
\fill[line width=0pt,color=qqqqff,fill=qqqqff,fill opacity=1.0] (2.12,0) -- (2.65,0) -- (2.65,0.51) -- (2.27,0.44) -- cycle;
\fill[line width=0pt,color=zzqqzz,fill=zzqqzz,fill opacity=1.0] (2.65,0.51) -- (2.65,1.53) -- (2.08,1.42) -- (1.89,0.87) -- (2.27,0.44) -- cycle;
\fill[line width=0pt,color=zzqqqq,fill=zzqqqq,fill opacity=1.0] (2.65,1.53) -- (2.65,2.29) -- (2.46,2.51) -- (1.89,2.4) -- (1.7,1.85) -- (2.08,1.42) -- cycle;
\fill[line width=0pt,color=ffffqq,fill=ffffqq,fill opacity=1.0] (2.65,2.29) -- (2.65,3.06) -- (2.46,2.51) -- cycle;
\fill[line width=0pt,color=ffqqqq,fill=ffqqqq,fill opacity=1.0] (2.65,3.06) -- (2.65,4.07) -- (2.46,4.04) -- (2.27,3.49) -- cycle;
\fill[line width=0pt,color=qqqqff,fill=qqqqff,fill opacity=1.0] (2.65,4.07) -- (2.65,5.09) -- (2.27,5.02) -- (2.08,4.47) -- (2.46,4.04) -- cycle;
\fill[line width=0pt,color=zzqqzz,fill=zzqqzz,fill opacity=1.0] (2.65,5.09) -- (2.65,6.11) -- (2.08,6) -- (1.89,5.46) -- (2.27,5.02) -- cycle;
\fill[line width=0pt,color=zzqqqq,fill=zzqqqq,fill opacity=1.0] (2.65,6.11) -- (2.65,6.87) -- (2.46,7.09) -- (1.89,6.98) -- (1.7,6.44) -- (2.08,6) -- cycle;
\fill[line width=0pt,color=ffffqq,fill=ffffqq,fill opacity=1.0] (2.65,6.87) -- (2.65,7.64) -- (2.46,7.09) -- cycle;
\fill[line width=0pt,color=ffqqqq,fill=ffqqqq,fill opacity=1.0] (2.65,7.64) -- (2.65,8.66) -- (2.46,8.62) -- (2.27,8.07) -- cycle;
\fill[line width=0pt,color=qqqqff,fill=qqqqff,fill opacity=1.0] (2.65,8.66) -- (2.65,9.17) -- (2.12,9.17) -- (2.08,9.06) -- (2.46,8.62) -- cycle;
\fill[line width=0pt,color=ffffqq,fill=ffffqq,fill opacity=1.0] (2.12,9.17) -- (1.32,9.17) -- (1.51,8.95) -- (2.08,9.06) -- cycle;
\fill[line width=0pt,color=zzqqqq,fill=zzqqqq,fill opacity=1.0] (1.32,9.17) -- (0.53,9.17) -- (0.38,8.73) -- (0.76,8.29) -- (1.32,8.4) -- (1.51,8.95) -- cycle;
\fill[line width=0pt,color=qqqqff,fill=qqqqff,fill opacity=1.0] (0.53,9.17) -- (0,9.17) -- (0,8.66) -- (0.38,8.73) -- cycle;
\fill[line width=0pt,color=ffqqqq,fill=ffqqqq,fill opacity=1.0] (0,8.66) -- (0,7.64) -- (0.57,7.75) -- (0.76,8.29) -- (0.38,8.73) -- cycle;
\fill[line width=0pt,color=ffffqq,fill=ffffqq,fill opacity=1.0] (0,7.64) -- (0,6.87) -- (0.19,6.66) -- (0.76,6.76) -- (0.94,7.31) -- (0.57,7.75) -- cycle;
\fill[line width=0pt,color=zzqqqq,fill=zzqqqq,fill opacity=1.0] (0,6.87) -- (0,6.11) -- (0.19,6.66) -- cycle;
\fill[line width=0pt,color=zzqqzz,fill=zzqqzz,fill opacity=1.0] (0,6.11) -- (0,5.09) -- (0.19,5.13) -- (0.38,5.67) -- cycle;
\fill[line width=0pt,color=qqqqff,fill=qqqqff,fill opacity=1.0] (0,5.09) -- (0,4.07) -- (0.38,4.15) -- (0.57,4.69) -- (0.19,5.13) -- cycle;
\fill[line width=0pt,color=ffqqqq,fill=ffqqqq,fill opacity=1.0] (0,4.07) -- (0,3.06) -- (0.57,3.16) -- (0.76,3.71) -- (0.38,4.15) -- cycle;
\fill[line width=0pt,color=ffffqq,fill=ffffqq,fill opacity=1.0] (0,3.06) -- (0,2.29) -- (0.19,2.07) -- (0.76,2.18) -- (0.94,2.73) -- (0.57,3.16) -- cycle;
\fill[line width=0pt,color=zzqqqq,fill=zzqqqq,fill opacity=1.0] (0,2.29) -- (0,1.53) -- (0.19,2.07) -- cycle;
\fill[line width=0pt,color=zzqqzz,fill=zzqqzz,fill opacity=1.0] (0,1.53) -- (0,0.51) -- (0.19,0.55) -- (0.38,1.09) -- cycle;
\fill[line width=0pt,color=ffzztt,fill=ffzztt,fill opacity=1.0] (0.19,0.55) -- (0.57,0.11) -- (1.13,0.22) -- (1.32,0.76) -- (0.94,1.2) -- (0.38,1.09) -- cycle;
\fill[line width=0pt,color=ffqqqq,fill=ffqqqq,fill opacity=1.0] (1.32,0.76) -- (1.89,0.87) -- (2.08,1.42) -- (1.7,1.85) -- (1.13,1.75) -- (0.94,1.2) -- cycle;
\fill[line width=0pt,color=qqffqq,fill=qqffqq,fill opacity=1.0] (0.94,1.2) -- (1.13,1.75) -- (0.76,2.18) -- (0.19,2.07) -- (0,1.53) -- (0.38,1.09) -- cycle;
\fill[line width=0pt,color=qqqqff,fill=qqqqff,fill opacity=1.0] (1.13,1.75) -- (1.7,1.85) -- (1.89,2.4) -- (1.51,2.84) -- (0.94,2.73) -- (0.76,2.18) -- cycle;
\fill[line width=0pt,color=ffzztt,fill=ffzztt,fill opacity=1.0] (1.89,2.4) -- (2.46,2.51) -- (2.65,3.06) -- (2.27,3.49) -- (1.7,3.38) -- (1.51,2.84) -- cycle;
\fill[line width=0pt,color=zzqqzz,fill=zzqqzz,fill opacity=1.0] (1.7,3.38) -- (1.32,3.82) -- (0.76,3.71) -- (0.57,3.16) -- (0.94,2.73) -- (1.51,2.84) -- cycle;
\fill[line width=0pt,color=qqffqq,fill=qqffqq,fill opacity=1.0] (1.7,3.38) -- (2.27,3.49) -- (2.46,4.04) -- (2.08,4.47) -- (1.51,4.36) -- (1.32,3.82) -- cycle;
\fill[line width=0pt,color=zzqqqq,fill=zzqqqq,fill opacity=1.0] (1.32,3.82) -- (1.51,4.36) -- (1.13,4.8) -- (0.57,4.69) -- (0.38,4.15) -- (0.76,3.71) -- cycle;
\fill[line width=0pt,color=ffffqq,fill=ffffqq,fill opacity=1.0] (1.51,4.36) -- (2.08,4.47) -- (2.27,5.02) -- (1.89,5.46) -- (1.32,5.35) -- (1.13,4.8) -- cycle;
\fill[line width=0pt,color=ffzztt,fill=ffzztt,fill opacity=1.0] (1.13,4.8) -- (1.32,5.35) -- (0.94,5.78) -- (0.38,5.67) -- (0.19,5.13) -- (0.57,4.69) -- cycle;
\fill[line width=0pt,color=ffqqqq,fill=ffqqqq,fill opacity=1.0] (1.32,5.35) -- (1.89,5.46) -- (2.08,6) -- (1.7,6.44) -- (1.13,6.33) -- (0.94,5.78) -- cycle;
\fill[line width=0pt,color=qqffqq,fill=qqffqq,fill opacity=1.0] (1.13,6.33) -- (0.76,6.76) -- (0.19,6.66) -- (0,6.11) -- (0.38,5.67) -- (0.94,5.78) -- cycle;
\fill[line width=0pt,color=qqqqff,fill=qqqqff,fill opacity=1.0] (1.13,6.33) -- (1.7,6.44) -- (1.89,6.98) -- (1.51,7.42) -- (0.94,7.31) -- (0.76,6.76) -- cycle;
\fill[line width=0pt,color=ffzztt,fill=ffzztt,fill opacity=1.0] (1.89,6.98) -- (2.46,7.09) -- (2.65,7.64) -- (2.27,8.07) -- (1.7,7.96) -- (1.51,7.42) -- cycle;
\fill[line width=0pt,color=zzqqzz,fill=zzqqzz,fill opacity=1.0] (1.51,7.42) -- (1.7,7.96) -- (1.32,8.4) -- (0.76,8.29) -- (0.57,7.75) -- (0.94,7.31) -- cycle;
\fill[line width=0pt,color=qqffqq,fill=qqffqq,fill opacity=1.0] (1.7,7.96) -- (2.27,8.07) -- (2.46,8.62) -- (2.08,9.06) -- (1.51,8.95) -- (1.32,8.4) -- cycle;
\draw [line width=0.4pt,color=zzqqqq,fill=zzqqqq,fill opacity=1.0] (1.32,-0.42) circle (0.42cm);
\draw [line width=0.4pt,color=ffffqq,fill=ffffqq,fill opacity=1.0] (1.32,9.59) circle (0.42cm);
\draw (0,0)-- (2.65,0);
\draw (2.65,0)-- (2.65,9.17);
\draw (2.65,9.17)-- (0,9.17);
\draw (0,9.17)-- (0,0);
\draw(1.32,-0.42) circle (0.42cm);
\draw(1.32,9.59) circle (0.42cm);
\end{tikzpicture}
	\captionof{figure}{}
	\label{fig:hosszukasgomb13}
\end{minipage}
\end{center}

In the following, we will discuss issues regarding the optimality of the above results, for example the necessity of the conditions in Theorem \ref{thm:main2}.

\begin{remark}
First, note that the obvious condition that no two vertices with distance at most $1$ can have the same colour (as they represent tiles that have distance at most $1$) was not even included in Theorem \ref{thm:main2}. The only way the condition that tiles of the same colour have distance more than $1$ is transferred into the conditions of Theorem \ref{thm:main2} is the nice colouring condition.
\end{remark}

Now examine the necessity of the conditions of Theorem \ref{thm:main2}.

Note that while in the proof of Theorem \ref{thm:main2}, the second case used a fully combinatorial argument, the first case was more geometric: it did not only use the combinatorial properties of $G$, but it also built on the fact that $S$ is a sphere. So the question logically arises: is it really necessary that $S$ is a sphere or there exist arbitrarily large spaces that are topologically isomorphic to the sphere, but the statement of Theorem \ref{thm:main2} cannot be generalized to them, and if so, can they be nicely $7$-tiled. The following lemma answers this question:

\begin{lemma}
There exists arbitrarily large $L$, such that a fully triangulated crossing-free graph can be drawn on the surface of the cylinder with height $L$ and a disk of radius $\left(1+\varepsilon\right)\cdot\frac{\sqrt{21}}{\pi}$ as a base such that this graph can be nicely coloured. Moreover this cylinder can be nicely $7$-tiled with the aforementioned graph being its adjacency graph.
\end{lemma}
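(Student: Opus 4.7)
My plan is to construct such a tiling explicitly, by wrapping a scaled Isbell hexagonal $7$-coloring around the lateral surface of the cylinder and closing off each flat disc cap with a bounded family of auxiliary tiles. The choice of radius $(1+\varepsilon)\sqrt{21}/\pi$ is precisely what is needed so that exactly $5$ horizontal periods of the Isbell pattern fit around the circumference $2(1+\varepsilon)\sqrt{21}$.

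For the lateral surface, set $s := 2(1+\varepsilon)/5$ and use the Isbell $7$-coloring of the plane by regular hexagons of side length $s$. Every hexagon has diameter $2s = 4(1+\varepsilon)/5 < 1$ provided $\varepsilon < 1/4$. The shortest same-colour sublattice vector of the Isbell coloring has Euclidean length $s\sqrt{21}$, so any two same-colored hexagons have minimum point-to-point distance at least $s(\sqrt{21}-2) = 2(1+\varepsilon)(\sqrt{21}-2)/5$, which strictly exceeds $1$ because $2(\sqrt{21}-2)/5 \approx 1.033$. Rotate the hex lattice so that one of the shortest same-colour sublattice vectors lies horizontally in the unrolled view; since $5$ copies of this vector have total length $2(1+\varepsilon)\sqrt{21}$ equal to the circumference, the coloring wraps around the cylinder consistently. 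Locally, the adjacency graph of this hexagonal tiling is the $6$-regular triangular lattice. Take $L$ to be any positive integer multiple of the vertical period $3s\sqrt{7}$ of the pattern in this orientation; then $L$ is arbitrarily large, and the top and bottom boundary circles of the lateral surface cut the pattern in geometrically identical periodic strips.

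For each of the two flat disc caps of radius $(1+\varepsilon)\sqrt{21}/\pi$, I would place a pole vertex at the centre, join it by radial edges to every vertex of the lateral hexagons appearing on the boundary circle, and insert one intermediate concentric ring of vertices so that every resulting triangular tile has diameter strictly less than $1$. Colors for the new cap tiles (pole, intermediate ring) are then assigned so that (i) no two same-colored cap tiles are within distance $1$ of each other, and (ii) no cap tile is within distance $1$ of a same-colored lateral hexagon. Once this is achieved, the resulting graph on the (topological) sphere is fully triangulated by construction, its combinatorial nice $7$-colouring is the exhibited tile-color map, and the geometric nice $7$-tiling is the one whose adjacency graph is this graph.

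The main obstacle will be exhibiting an explicit color assignment on the caps that verifies (i) and (ii). This is a finite combinatorial check: the cap and its close neighbourhood meet only boundedly many lateral hexagons (due to the $5$-fold periodicity of the Isbell colors along the boundary circle and the uniform slack $\varepsilon > 0$ in every distance inequality), so after fixing the radius of the intermediate ring only finitely many forbidden colors are imposed on each cap tile. A direct case analysis then produces admissible colors, completing the construction.
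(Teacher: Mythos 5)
Your lateral-surface analysis is correct and reproduces the stated radius: with hexagon side $s = 2(1+\varepsilon)/5$, five copies of the shortest same-colour translate (length $s\sqrt{21}$) make up the circumference, $2s<1$ for $\varepsilon<1/4$, and $s(\sqrt{21}-2)>1$ for all $\varepsilon>0$. The gap is in the caps. Each cap is a flat disk of radius $r=(1+\varepsilon)\sqrt{21}/\pi\approx1.46$, hence of diameter almost $2.92$, roughly three times the maximum permitted tile diameter. In a pole-plus-one-ring scheme every ring tile touches the central pole tile, which has diameter $<1$; so any two ring tiles are within distance $<1$ of one another, and together with the pole they must all receive distinct colours, leaving room for at most six ring tiles. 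But a tile reaching from the pole boundary (radius about $0.5$) out to the cap boundary (radius about $1.46$) has diameter $<1$ only if its angular width is under roughly $19^{\circ}$, which forces around twenty such tiles. Six is far too few, so one intermediate annulus cannot nicely $7$-colour a disk of this size; you need at least two intermediate annuli and a colour assignment that is consistent both internally and with the Isbell colours of the lateral hexagons meeting the boundary circle. That assignment, which you defer as a ``finite combinatorial check'', is exactly where the difficulty of the lemma lies at this radius, not a routine verification.

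Note also a discrepancy worth flagging: the paper's own proof merely cites Figures 13--14, which appear to depict a cylinder with a \emph{single} Isbell period around the circumference and a cap consisting of one small tile of diameter $<1$. That simpler construction has radius of order $s\sqrt{21}/(2\pi)<0.4$, considerably smaller than the $(1+\varepsilon)\sqrt{21}/\pi$ in the statement, so the paper itself does not display a cap tiling at the scale your (correct) period count implies.
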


\begin{proof}
Figure \ref{fig:hosszukasgomb14} shows a construction for the graph and Figure \ref{fig:hosszukasgomb13} shows a construction for the tiling.
\end{proof}

Now suppose that we do not require the edges to be short enough.

One could also ask the question for two-dimensional manifolds in general: is it possible to nicely tile such a manifold $M$ using only $7$ colours? First, we make a simple observation for toruses:

\begin{lemma}
There exist arbitrary large toruses in $\mathbb{R}^3$ which can be nicely tiled with $7$ colours.
\end{lemma}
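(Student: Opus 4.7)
The plan is to adapt the planar Isbell 7-colouring to a suitable torus $T_{r,R}\subset\mathbb{R}^3$ of tube radius $r$ and major radius $R$, both chosen large. First, fix a hexagon side length $s$ with $\frac{1}{\sqrt{21}-2}<s<\frac{1}{2}$, so that the planar Isbell 7-colouring is nice with strict margins: tiles have diameter $2s<1$ and same-coloured tiles are at Euclidean distance at least $s(\sqrt{21}-2)>1$. This colouring is invariant under a rectangular sublattice of the hexagonal lattice with perpendicular generators of lengths $L=s\sqrt{21}$ and $L\sqrt{3}$.

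Next, choose positive integers $m\ge 3$ and $n$, and set $r=mL/(2\pi)$ and $R=nL\sqrt{3}/(2\pi)$, so that $2\pi r=mL$ and $2\pi R=nL\sqrt{3}$ are both integer multiples of fundamental periods of the Isbell colouring. Parametrise $T_{r,R}$ by $(u,\theta)$, where $u\in[0,mL)$ is arc length around the tube and $\theta\in[0,nL\sqrt{3})$ is the equatorial major arc length (i.e.\ $\theta=R\psi$ with $\psi$ the major angle). Pull back the Isbell colouring by the map $(u,\theta)\mapsto$ the corresponding point of $T_{r,R}$; since the periods match, this defines a 7-colouring of the torus.

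It remains to verify niceness with respect to Euclidean distance in $\mathbb{R}^3$. Tile diameters in $\mathbb{R}^3$ are bounded above by the intrinsic diameters, which equal $2s$ up to a correction of order $s^2/R$ caused by the varying latitudinal circumference $2\pi(R+r\cos\phi)$; for $R$ large this stays strictly below $1$. For same-colour pairs one distinguishes three cases. For close pairs at intrinsic distance roughly $s(\sqrt{21}-2)$, the chord differs from the arc by $O(L^3/r^2+L^3/R^2)$, which is smaller than the planar margin $s(\sqrt{21}-2)-1$ once $r,R$ are sufficiently large. For pairs on nearly opposite sides of the tube, the $\mathbb{R}^3$ distance is at least $2r>1$, which holds once $m\ge 3$ (so $r>1$). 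For pairs on nearly opposite sides of the donut, the $\mathbb{R}^3$ distance is at least $2(R-r)$, which is arbitrarily large.

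By taking $m$ and $n$ to infinity (keeping $n\gg m$), one obtains a family of arbitrarily large toruses $T_{r,R}$ admitting a nice 7-tiling. The principal obstacle is making the pullback rigorous: the standard torus embedding is not flat, so the map from the rectangle $[0,mL)\times[0,nL\sqrt{3})$ to the torus is not an isometry, and the hexagonal cells become slightly warped. This distortion must be controlled so that the margins inherited from the planar colouring survive; the control rests on the induced metric $ds^2=du^2+(1+(r/R)\cos(u/r))^2\,d\theta^2$ being uniformly close to the flat metric $du^2+d\theta^2$ as $R/r\to\infty$.
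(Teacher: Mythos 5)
Your proposal follows essentially the same approach as the paper: wrap the periodic Isbell colouring around a torus whose circumferences are multiples of the colouring's rectangular periods, then exploit the strict margins (diameter $<1$, same-colour distance $>1$) to absorb the distortion that comes from the torus not being flat, with the minor radius large enough that chord--arc discrepancies and antipodal wrap-around through the tube cannot bring same-coloured tiles within unit distance. You spell out the parameters and the three case-split for same-colour pairs more explicitly than the paper, which merely appeals to the Isbell colouring being ``slightly scaled or deformed,'' but the underlying idea and its reliance on the margin in the planar colouring are the same.
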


\begin{proof}
The Isbell colouring is periodic both horizontally and vertically if we draw it in the way as in Figure \ref{fig:Isbell}. Also note that (as all nice tilings) it can be slightly scaled or deformed and still remains a nice tiling. Thus, if we take a torus with a large enough minor radius and a major radius that is large enough even compared to its minor radius, the Isbell colouring can be drawn on the surface of the torus.
\end{proof}

But this does not work for other bounded $2$-dimensional manifolds:

\begin{lemma}\label{lem:manifold}
Let $M$ be a bounded $2$-dimensional manifold without any non-contractible curves with diameter less than $1$. Then $M$ cannot be nicely coloured.
\end{lemma}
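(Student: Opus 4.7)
The plan is to adapt the strategy of Theorem \ref{thm:main2} to an arbitrary bounded $2$-manifold $M$. Suppose for contradiction that $M$ admits a nice $7$-colouring, and construct the adjacency graph $G$ drawn on $M$ exactly as in Section \ref{sec:converting}. Because the construction only uses that the tiles have diameter less than $1$ and that the colouring is nice, Lemmas \ref{lem:nice}, \ref{lem:unitdisk}, \ref{lem:distance} and \ref{lem:fullytriangulated} carry over verbatim: $G$ is a fully triangulated graph drawn on $M$, the induced colouring of its vertices is a nice $7$-colouring, adjacent vertices lie within distance less than $2$, and in particular every vertex has degree at most $6$.

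Applying Euler's formula to this triangulation gives $V-E+F=\chi(M)$ together with $2E=3F$, which combine to
\[
\sum_{v\in V(G)}(6-\deg v)=6\chi(M).
\]
The left-hand side is non-negative, so $\chi(M)\ge 0$, and $M$ must be one of $S^2$, $\mathbb{RP}^2$, $T^2$, or the Klein bottle. The sphere case follows from Theorem \ref{thm:main}, so it remains to handle the three cases with non-trivial fundamental group, where the goal is to produce a non-contractible curve on $M$ of diameter less than $1$, contradicting the hypothesis.

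For $\chi(M)=0$ (torus or Klein bottle) the identity above forces every vertex of $G$ to have degree exactly $6$. Lemmas \ref{lem:Isbell1} and \ref{lem:Isbell2} then imply that the colouring on each $G_H$-neighbourhood is the unique restriction of an Isbell colouring, and these local pieces glue consistently across the universal cover to give a global Isbell colouring of the infinite triangular grid. The deck group therefore embeds into the colour-preserving automorphism group of the Isbell pattern, and selecting a primitive deck element produces a non-contractible closed walk in $G$ of bounded graph length. For $\chi(M)=1$ (projective plane) the identity leaves only six irregular vertices (counted with multiplicity); one surrounds them by a short cycle as in Case 1 of Theorem \ref{thm:main2}, and the non-orientable companion of that cycle provides the analogous short non-contractible walk.

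The hard part will be converting each of these graph-theoretic short non-contractible walks into an actual non-contractible curve in $M$ of diameter strictly less than $1$. A crude length-to-diameter estimate only yields a bound of order $d_1$, which is not automatically below $1$. The sharp argument will instead exploit the tile structure directly: a shortest non-contractible walk is supported in the union of only a few adjacent tiles of the nice tiling, each of diameter less than $1$, and a careful geometric overlap argument across this short chain of tiles should produce a non-contractible simple closed curve of diameter less than $1$, yielding the desired contradiction.
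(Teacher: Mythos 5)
The paper's own proof of this lemma is a one-step Euler count: constructing $G$ as in Section~\ref{sec:converting} gives a triangulation of $M$, so $2\left\lvert E(G)\right\rvert = 3\left\lvert \Delta(G)\right\rvert$ combined with $\chi_E(M) = \left\lvert V(G)\right\rvert - \left\lvert E(G)\right\rvert + \left\lvert \Delta(G)\right\rvert$ yields $\left\lvert E(G)\right\rvert = 3\left\lvert V(G)\right\rvert - 3\chi_E(M)$, hence average degree $6 - 6\chi_E(M)/\left\lvert V(G)\right\rvert$; when $\chi_E(M)<0$ this exceeds $6$, so some vertex has degree at least $7$, contradicting Lemma~\ref{lem:nice}. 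That is the entire argument, and the sentence ``since the Euler characteristic of $M$ is negative'' makes plain that the proof is implicitly restricted to $\chi_E(M)<0$.

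Your proposal reproduces this degree count (your first paragraph gives the contrapositive $\chi(M)\ge 0$), but then sets out to also dispose of the remaining cases $\chi_E\in\{0,1,2\}$, which goes beyond what the paper proves and, for $\chi_E=0$, cannot be made to work. The preceding lemma in the paper exhibits arbitrarily large toruses that \emph{do} admit a nice $7$-tiling, and a torus of revolution with minor radius $\rho\ge 1/2$ and $R-\rho\ge 1/2$ has no non-contractible curve of diameter below $1$: any such curve winds around the tube or the hole, hence contains two points at distance at least $2\rho$ or $2(R-\rho)$. So the statement taken literally fails for $\chi_E=0$, and your argument for that case already breaks before the gap you flag --- a primitive deck translation of a large torus has large norm, so the closed walk it induces in $G$ has correspondingly large length, not ``bounded graph length.'' You also explicitly concede the last step, the passage from a short non-contractible walk in $G$ to a non-contractible curve in $M$ of diameter strictly less than $1$, so the $\chi_E=1$ branch is left open as well. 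The correct reading is that the lemma (and the paper's proof) concerns only $\chi_E(M)<0$, where your opening Euler computation already gives the contradiction by itself.
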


\begin{proof}
We can construct $G$ in a similar way as in case of a sphere (as described in Section \ref{sec:converting}). Now again we get a triangulation of $M$, thus $\chi_E(M)=\left\lvert V\left(G\right)\right\rvert-\left\lvert E\left(G\right)\right\rvert+\left\lvert\Delta\left(G\right)\right\rvert$ (where $\chi_E(M)$ denotes the Euler characteristic of $M$). And since $\left\lvert\Delta\left(G\right)\right\rvert=\frac{2}{3}\cdot\left\lvert E\left(G\right)\right\rvert$, $\left\lvert E\left(G\right)\right\rvert=3\cdot\left\lvert V\left(G\right)\right\rvert-3\cdot\chi_E(M)$. And since the Euler characteristic of $M$ is negative, this means that at least one vertex of $G$ must have degree more than $6$, thus, it cannot be nicely coloured.
\end{proof}

But it is not straightforward to generalize Theorem \ref{thm:main} from here.

\begin{lemma}
For any non-negative integer numbers $k$ and $n_{min}$ and real numbers $a_{min}$ and $A_{min}$ we can find a $2$-dimensional bounded manifold $M$ with $k$ genuses that is embeddable to $\mathbb{R}^3$ such that it has at least $n_{min}$ tiles, a diameter at least $a_{min}$ and a surface area at least $A_{min}$ and it can be nicely tiled using $4$ colours.
\end{lemma}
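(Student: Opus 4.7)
The plan is to realize $M$ explicitly as a long thin cylindrical ``sausage'' in $\mathbb{R}^3$ with $k$ short thin handles attached. Fix a small $\epsilon>0$ and choose $\ell$ with $\tfrac12<\ell<\sqrt{1-4\epsilon^2}$. Let $\Sigma$ be the boundary of the set $\{(x,y,z):y^2+z^2\le\epsilon^2,\ 0\le x\le L\}$, capped at the two ends by hemispheres of radius $\epsilon$; topologically $\Sigma$ is a $2$-sphere. Select axial positions $0\ll x_1<x_2<\cdots<x_k\ll L$ with consecutive gaps and the two end-gaps all exceeding $1.1$, and at each $x_i$ attach a \emph{handle}: a thin tube of Euclidean extent $O(\epsilon)$ joining two small disks cut out of $\Sigma$ in a tiny axial neighbourhood of $x_i$. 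The resulting surface $M$ is a closed orientable $2$-manifold of genus exactly $k$, embedded in a compact region of $\mathbb{R}^3$.

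The $4$-coloring is the following. Slice the cylindrical part of $\Sigma$ with the planes $x=j\ell$ to produce bands of axial length $\ell$ (some of which have one or two small disks missing at handle attachments); color these bands cyclically with colors $1,2,3,1,2,3,\dots$ along the axis. Declare each of the two hemispherical endcaps and each of the $k$ handles to be a single tile of color $4$. Bands with attachment holes and the handle tiles themselves fail to be simply connected, but the generalized definition of nice tiling used in this paper explicitly permits this.

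Checking niceness: each band lies in an axial slab of width $\ell$ with cross-section of diameter $2\epsilon$, hence has Euclidean diameter at most $\sqrt{\ell^2+4\epsilon^2}<1$; each endcap has diameter $2\epsilon<1$; each handle has diameter $O(\epsilon)<1$. Two bands of the same color in $\{1,2,3\}$ are separated by at least two intervening bands along a straight axis, so their Euclidean distance is $\ge 2\ell>1$. The color-$4$ tiles are the two endcaps and the $k$ handles, and pairwise their Euclidean distances are essentially $\min_{i\ne j}|x_i-x_j|$, $x_1$, and $L-x_k$ (up to an additive $O(\epsilon)$ correction), all of which exceed $1$ by the choice of axial positions.

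The three size requirements push in the same direction: taking $L$ large makes the number of tiles $\approx L/\ell+k+2$ exceed $n_{\min}$, the extrinsic diameter $\approx L$ exceed $a_{\min}$, and the surface area $\approx 2\pi\epsilon L$ exceed $A_{\min}$. The main subtlety, and only real obstacle, is that the definition uses Euclidean rather than intrinsic distances, so one must ensure that neither the handles nor any curvature of the sausage creates an $\mathbb{R}^3$ shortcut that brings two same-color tiles within distance $1$; this is handled by keeping the central axis perfectly straight and each handle of Euclidean extent $O(\epsilon)$, so that the distance between any two distant bands is, up to an $O(\epsilon)$ error, simply their axial separation.
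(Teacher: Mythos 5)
Your construction is correct and achieves everything the lemma asks for, but it differs from the paper's own construction in how the genus is produced and in the geometry of the tiling. The paper builds a \emph{ladder graph} in the plane: two parallel rails at distance $0.9$, each tiled periodically with red/blue/green segments of length $0.9$ with a half-period offset between the rails, plus $k+1$ perpendicular black rungs at pairwise distance more than $1$; this one-dimensional skeleton (which has cycle rank $k$) is then thickened into a system of thin tubes to obtain a genus-$k$ surface. The minimum same-colour distance there comes from the Pythagorean calculation between offset segments on opposite rails, $\sqrt{0.45^2+0.9^2}\approx 1.006$, which is quite tight. Your ``sausage with handles'' takes a single long thin cylinder, colours axial bands cyclically with three colours, and assigns the fourth colour to the $k$ small handles and two endcaps; same-colour bands are then separated by at least $2\ell>1$ purely axially, so no two-rail Pythagorean argument is needed and the slack is larger, at the cost of having to keep track of the holes the handle attachments punch into the bands (which the paper's generalized tiling notion permits). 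The two constructions are topologically the same (both are boundaries of thin tubular neighbourhoods of cycle-rank-$k$ graphs) and both use a $3+1$ colour scheme, but the underlying skeleta (ladder versus segment-with-loops) and the resulting distance estimates are genuinely different. One small detail you should make explicit: choose the axial positions $x_i$ and the slicing planes $x=j\ell$ so that no plane passes through a handle, and note that $\varepsilon<\sqrt{3}/4$ guarantees the interval $(1/2,\sqrt{1-4\varepsilon^2})$ for $\ell$ is nonempty.
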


\begin{proof}
\begin{center}
	\centering
\usetikzlibrary{arrows}
\definecolor{qqwuqq}{rgb}{0.,0.39215686274509803,0.}
\definecolor{qqqqff}{rgb}{0.,0.,1.}
\definecolor{ffqqqq}{rgb}{1.,0.,0.}
\begin{tikzpicture}[line cap=round,line join=round]
\clip(-0.1,-0.4) rectangle (9.1,1.35);
\draw [line width=2.pt,color=ffqqqq] (0.,0.)-- (0.9,0.);
\draw [line width=2.pt,color=qqqqff] (0.9,0.)-- (1.8,0.);
\draw [line width=2.pt,color=qqwuqq] (1.8,0.)-- (2.7,0.);
\draw [line width=2.pt,color=ffqqqq] (2.7,0.)-- (3.6,0.);
\draw [line width=2.pt,color=qqqqff] (3.6,0.)-- (4.5,0.);
\draw [line width=2.pt,color=qqwuqq] (4.5,0.)-- (5.4,0.);
\draw [line width=2.pt,color=ffqqqq] (5.4,0.)-- (6.3,0.);
\draw [line width=2.pt,color=qqqqff] (6.3,0.)-- (7.2,0.);
\draw [line width=2.pt,color=qqwuqq] (7.2,0.)-- (8.1,0.);
\draw [line width=2.pt,color=ffqqqq] (8.1,0.)-- (9.,0.);
\draw [line width=2.pt,color=qqqqff] (0.,0.9)-- (0.45,0.9);
\draw [line width=2.pt,color=qqwuqq] (0.45,0.9)-- (1.35,0.9);
\draw [line width=2.pt,color=ffqqqq] (1.35,0.9)-- (2.25,0.9);
\draw [line width=2.pt,color=qqqqff] (2.25,0.9)-- (3.15,0.9);
\draw [line width=2.pt,color=qqwuqq] (3.15,0.9)-- (4.05,0.9);
\draw [line width=2.pt,color=ffqqqq] (4.05,0.9)-- (4.95,0.9);
\draw [line width=2.pt,color=qqqqff] (4.95,0.9)-- (5.85,0.9);
\draw [line width=2.pt,color=qqwuqq] (5.85,0.9)-- (6.75,0.9);
\draw [line width=2.pt,color=ffqqqq] (6.75,0.9)-- (7.65,0.9);
\draw [line width=2.pt,color=qqqqff] (7.65,0.9)-- (8.55,0.9);
\draw [line width=2.pt,color=qqwuqq] (8.55,0.9)-- (9.,0.9);
\draw [line width=2.pt] (0.,0.)-- (0.,0.9);
\draw [line width=2.pt] (9.,0.)-- (9.,0.9);
\draw [line width=2.pt] (2.0192405384730834,0.)-- (2.0192405384730834,0.9);
\draw [line width=2.pt] (4.296167077630869,0.)-- (4.296167077630869,0.9);
\draw [line width=2.pt] (7.748580082126212,0.)-- (7.748580082126211,0.9);
\draw [<->,line width=1.pt] (0.,-0.1) -- (0.9,-0.1);
\draw [<->,line width=1.pt] (0.,1.) -- (0.45,1.);
\draw [<->,line width=1.pt] (0.45,1.) -- (1.35,1.);
\begin{scriptsize}
\draw[color=black] (0.4272431208505662,-0.3280181053614931) node {$0.9$};
\draw[color=black] (0.3,1.25) node {$0.45$};
\draw[color=black] (0.9,1.25) node {$0.9$};
\end{scriptsize}
\end{tikzpicture}
	\captionof{figure}{}
	\label{2dimmanifold}
\end{center}

Take two parallel line segments in the plane with distance $0.9$ and tile both of them with red, blue and green segments of length $0.9$ periodically such that the difference of the two periods is $1.35$. Then put $k+1$ perpendicular segments between them such that their minimum distance is more than $1$, otherwise let their placement be arbitrary. Now colour these segments with black (see Figure \ref{2dimmanifold}). In this drawing, all tiles of the same colour have distace more than $1$: the minimal distance of the horizontal tiles of the same colour is $\sqrt{(0.45)^2+(0.9)^2}=1.006...$. So if the two segments were taken long enough, then by replacing this drawing by a system of thin enough tubes, we get the desired $M$.
\end{proof}

Such constructions are the reason why Thomassen needed condition 2 in Theorem \ref{thm:thomassen} and we needed a similar condition in Lemma \ref{lem:manifold}.

\section{Acknowledgement}

I would like to thank my supervisor Dömötör Pálvölgyi for suggesting the problem, for his help in preparing this manuscript and all his guidance throughout the last few years. I would also like to thank Gergely Ambrus for his idea on simplifying my proof.

The result was obtained by working on the Polymath 16 project and is related, but not directly connected to it.

\end{document}